\documentclass[11pt,twoside]{article}
\usepackage{fullpage}

\usepackage{epsf}
\usepackage{fancyhdr}
\usepackage{graphics}
\usepackage{graphicx}
\usepackage{psfrag}

\newcommand{\quadvar}[2]{[#1]_{#2}}

\newcommand{\gausscenter}{u}
\newcommand{\gausscenterstar}{\gausscenter^*}

\newcommand{\annulus}{\ensuremath{\mathbb{A}}}

\newcommand{\loglihoodsing}{F^S}
\newcommand{\smoothloglihood}{\tilde{\loglihood}^S}

\newcommand{\holderconst}{A}

\usepackage{microtype}
\usepackage{subfigure}
\usepackage{algorithmic}
\usepackage{color,xcolor}
\usepackage[linesnumbered,ruled]{algorithm2e}
\DontPrintSemicolon
\usepackage{color}

\usepackage{amsthm}
\usepackage{amsfonts}
\usepackage{amsmath}
\usepackage{amssymb,bbm}
\usepackage{cleveref}

\usepackage{epsf}
\usepackage{fancyhdr}
\usepackage{graphics}
\usepackage{graphicx}
\usepackage{psfrag}
\usepackage{microtype}
\usepackage{subfigure}
\usepackage{color,xcolor}
\usepackage{color}

\usepackage{amsfonts}
\usepackage{amsmath}
\usepackage{amssymb,bbm}






\newcommand{\dims}{\ensuremath{d}}

\newcommand{\real}{\ensuremath{\mathbb{R}}}



\newcommand{\thetastar}{\ensuremath{\theta^*}}
\newcommand{\thetahat}{\ensuremath{\widehat{\theta}}}
\newcommand{\thetatil}{\ensuremath{\widetilde{\theta}}}


\newcommand{\NORMAL}{\ensuremath{\mathcal{N}}}

\newcommand{\Xspace}{\ensuremath{\mathcal{X}}}

\newcommand{\brackets}[1]{\left[ #1 \right]}
\newcommand{\parenth}[1]{\left( #1 \right)}

\newcommand{\abss}[1]{\left| #1 \right |}




\newcommand{\Rspace}{\ensuremath{\mathbb{R}}}

\newcommand{\ball}{\ensuremath{\mathbb{B}}}
\newcommand{\sphere}{\ensuremath{\mathbb{S}}}

\newcommand{\normden}{\ensuremath{\phi}}

\newcommand{\sd}{\ensuremath{\sigma}}

\newcommand{\mydefn}{\ensuremath{:=}}

\newcommand{\paraspace}{\ensuremath{\Theta}}
\newcommand{\loglihood}{\ensuremath{F}}
\newcommand{\prior}{\ensuremath{\pi}}
\newcommand{\posterior}{\ensuremath{\Pi}}

\newcommand{\noise}{\ensuremath{\varepsilon}}
\newcommand{\loglihoodgaus}{\ensuremath{F^{G}}}
\newcommand{\loglihoodind}{\ensuremath{F^{I}}}
\newcommand{\loglihoodlogit}{\ensuremath{F^{R}}}
\newcommand{\noiseone}{\ensuremath{\varepsilon_{1}}}
\newcommand{\noisetwo}{\ensuremath{\varepsilon_{2}}}

\newcommand{\weakcon}{\ensuremath{\psi}}
\newcommand{\perturb}{\ensuremath{\zeta}}
\newcommand{\inver}{\ensuremath{\xi}}

\newcommand{\unicon}{c}

\newcommand{\smoothness}{L} 


\newcommand{\defn}{:=}


\newcommand{\matsnorm}[2]{|\!|\!| #1 | \! | \!|_{{#2}}}
\newcommand{\vecnorm}[2]{\left\| #1\right\|_{#2}}
\newcommand{\enorm}[1]{\vecnorm{#1}{2}} 

\newcommand{\opnorm}[1]{\ensuremath{\matsnorm{#1}{\tiny{\mbox{op}}}}}

\newcommand{\inprod}[2]{\ensuremath{\langle #1 , \, #2 \rangle}}

\newcommand{\kull}[2]{\ensuremath{D_{\text{KL}}(#1\; \| \; #2)}}

\newcommand{\Exs}{\ensuremath{{\mathbb{E}}}}
\newcommand{\Prob}{\ensuremath{{\mathbb{P}}}}



\newtheoremstyle{named}{}{}{\itshape}{}{\bfseries}{.}{.5em}{\thmnote{#3's }#1}
\theoremstyle{named}

\theoremstyle{plain}

\newtheorem{theorem}{Theorem}
\newtheorem{proposition}{Proposition}
\newtheorem{lemma}{Lemma}

\newtheorem{corollary}{Corollary}

\newlength{\widebarargwidth}
\newlength{\widebarargheight}
\newlength{\widebarargdepth}
\DeclareRobustCommand{\widebar}[1]{%
  \settowidth{\widebarargwidth}{\ensuremath{#1}}%
  \settoheight{\widebarargheight}{\ensuremath{#1}}%
  \settodepth{\widebarargdepth}{\ensuremath{#1}}%
  \addtolength{\widebarargwidth}{-0.3\widebarargheight}%
  \addtolength{\widebarargwidth}{-0.3\widebarargdepth}%
  \makebox[0pt][l]{\hspace{0.3\widebarargheight}%
    \hspace{0.3\widebarargdepth}%
    \addtolength{\widebarargheight}{0.3ex}%
    \rule[\widebarargheight]{0.95\widebarargwidth}{0.1ex}}%
  {#1}}

\makeatletter
\long\def\@makecaption#1#2{
        \vskip 0.8ex
        \setbox\@tempboxa\hbox{\small {\bf #1:} #2}
        \parindent 1.5em  
        \dimen0=\hsize
        \advance\dimen0 by -3em
        \ifdim \wd\@tempboxa >\dimen0
                \hbox to \hsize{
                        \parindent 0em
                        \hfil
                        \parbox{\dimen0}{\def\baselinestretch{0.96}\small
                                {\bf #1.} #2
                                }
                        \hfil}
        \else \hbox to \hsize{\hfil \box\@tempboxa \hfil}
        \fi
        }
\makeatother


\long\def\comment#1{}
\definecolor{battleshipgrey}{rgb}{0.52, 0.52, 0.51}
\definecolor{darkgray}{rgb}{0.66, 0.66, 0.66}
\definecolor{darkgreen}{rgb}{0.0, 0.2, 0.13}
\definecolor{darkspringgreen}{rgb}{0.09, 0.45, 0.27}
\definecolor{dukeblue}{rgb}{0.0, 0.0, 0.61}
\definecolor{olivedrab7}{rgb}{0.24, 0.2, 0.12}
\definecolor{darkblue}{rgb}{0.0, 0.0, 0.55}
\definecolor{darkscarlet}{rgb}{0.34, 0.01, 0.1}
\definecolor{candyapplered}{rgb}{1.0, 0.03, 0.0}
\definecolor{ao(english)}{rgb}{0.0, 0.5, 0.0}
\definecolor{applegreen}{rgb}{0.55, 0.71, 0.0}


\usepackage{url}

\usepackage{nicefrac}
\usepackage{comment}
\newcommand{\totalvariation}{d_{\mathrm{TV}}}
\newcommand{\Wass}{\mathcal{W}}
\usepackage{chngpage}

 \usepackage{tabularx}%

\usepackage{enumitem}
\usepackage{booktabs}
\usepackage{caption}

\usepackage{bm,bbm}
\usepackage{mathtools}
\newtheorem{assumption}{Assumption}

\newcommand{\numobs}{\ensuremath{n}}

\newcommand{\uniconprime}{\unicon'}

\newcommand{\DataX}{\ensuremath{X_1^{\numobs}}}
\newcommand{\DataZ}{\ensuremath{Z_1^{\numobs}}}
\newcommand{\usedim}{\ensuremath{d}}

\newcommand{\HessianStar}{\ensuremath{H^*}}
\newcommand{\stepsize}{\eta}

\newcommand{\offpar}{\omega}
\newcommand{\diffusionforposteriorTilde}{\widetilde{\Theta}^{(n)}}
\newcommand{\lyap}{\Phi}
\newcommand{\smalloffset}{\varsigma}

\newcommand{\globalmaxima}{\mathcal{M}^*}
\newcommand{\gap}{\Delta_0}

\newcommand{\targetdensity}{\mu}
\newcommand{\thetamap}{\widehat{\theta}^{(n)}}

\newcommand{\boundconsprior}{\ensuremath{B}}

\newcommand{\rade}{\ensuremath{\varepsilon}}

\newcommand{\Ball}{\ensuremath{\mathbb{B}}}
\newcommand{\radius}{\ensuremath{r}}
\newcommand{\RBALL}{\ensuremath{\Ball(\thetastar; \radius)}}

\newcommand{\mprob}{\ensuremath{\mathbb{P}}}

\newcommand{\Event}{\mathcal{E}}
\newcommand{\Term}{T}
\newcommand{\Ind}{\mathbb{I}}
\newcommand{\constrong}{\alpha}
\newcommand{\localradius}{r_0}
\newcommand{\lowerweakconcavemu}{\mu}
\long\def\comment#1{}

\newenvironment{carlist}
 {\begin{list}{$\bullet$}
 {\setlength{\topsep}{0in} \setlength{\partopsep}{0in}
  \setlength{\parsep}{0in} \setlength{\itemsep}{\parskip}
  \setlength{\leftmargin}{0.07in} \setlength{\rightmargin}{0.08in}
  \setlength{\listparindent}{0in} \setlength{\labelwidth}{0.08in}
  \setlength{\labelsep}{0.1in} \setlength{\itemindent}{0in}}}
 {\end{list}}

\newcommand{\bcar}{\begin{carlist}}
\newcommand{\ecar}{\end{carlist}}

\newcommand{\strongconvex}{\mu}
\newcommand{\smooth}{L_1}
\newcommand{\smoothprior}{L_2}

\newcommand{\singconstone}{c_1}

\newcommand{\simiid}{\stackrel{\mathrm{i.i.d.}}{\sim}}

\newcommand{\ellipse}{\mathfrak{E}}

\newcommand{\hackpar}{\ensuremath{\nu}}

\newcommand{\Gind}{\ensuremath{V}}


\begin{document}

\begin{center}
{\bf{\LARGE{A Diffusion Process Perspective on Posterior Contraction
      Rates for Parameters}}}

\vspace*{.2in}
 {\large{
 \begin{tabular}{ccc}
  Wenlong Mou$^{\diamond}$ & Nhat Ho$^{\star}$ &  Martin J. Wainwright$^{\diamond, \dagger, \ddagger}$ \\
 \end{tabular}
 \begin{tabular}
 {cc}
  Peter Bartlett$^{\diamond, \dagger}$ & Michael I. Jordan$^{\diamond, \dagger}$
 \end{tabular}

}}

\vspace*{.2in}

 \begin{tabular}{c}
 Department of EECS$^\diamond$,
 Department of Statistics$^\dagger$,
 UC Berkeley\\
 \end{tabular}

 \vspace*{.1in}
 \begin{tabular}{c}
 Department of Statistics and Data Science, UT Austin$^\star$\\
 \end{tabular}

 \vspace*{.1in}
 \begin{tabular}{c}
 Department of EECS, MIT$^\ddagger$
 \end{tabular}

\vspace*{.2in}

\today

\vspace*{.2in}

\begin{abstract}
We analyze the posterior contraction rates of parameters in Bayesian
models via the Langevin diffusion process, in particular by
controlling moments of the stochastic process and taking
limits. Analogous to the non-asymptotic analysis of statistical
M-estimators and stochastic optimization algorithms, our contraction
rates depend on the structure of the population log-likelihood
function, and stochastic perturbation bounds between the population
and sample log-likelihood functions. Convergence rates are determined
by a non-linear equation that relates the population-level structure
to stochastic perturbation terms, along with a term characterizing the
diffusive behavior.  Based on this technique, we also prove
non-asymptotic versions of a Bernstein-von-Mises guarantee for the
posterior. We illustrate this general theory by deriving posterior
convergence rates for various concrete examples, as well as
approximate posterior distributions computed using Langevin sampling
procedures.
\end{abstract}
\end{center}

\section{Introduction} 
\label{sec:introduction}

Bayesian inference is one of the central pillars of statistics.  In
Bayesian analysis, we first endow the parameter space with a prior
distribution chosen by modeling considerations, and then apply Bayes'
rule, combining the prior with the likelihood, so as to form the
posterior distribution.  From a statistical perspective, this
posterior is of fundamental interest, and there are various questions
associated with its behavior, including its consistency as the sample
size goes to infinity, and from a more refined point of view, its
contraction rate in various metrics.

The earliest work on posterior consistency dates back to the seminal
work of Doob~\cite{Doob-49}, who demonstrated that the posterior
distribution is consistent for all parameters apart from a set of zero
measure.  Subsequent work by Freedman~\cite{Freedman-63_first,
  Freedman-65_second} provided examples showing that this null set can
be problematic for Bayesian consistency in non-parametric settings. In
order to address this issue, Schwartz~\cite{Schwartz-65} proposed a
general framework for establishing posterior consistency for both
semiparametric and nonparametric models. Since then, a number of
researchers have isolated conditions that are useful for studying
posterior distributions~\cite{Barron-Shervish-Wasserman-99,
  Walker-2003, Walker-2004}.

Moving beyond posterior consistency, convergence rates for the
posterior density function, along with associated parameters of
models, remains an active area of research. For posterior densities,
Ghosal et al.~\cite{Ghosal-2000} gave a general testing framework for
proving convergence rates for both finite and infinite dimensional
models; it has been used by various researchers to analyzer posterior
densities for Dirichlet and nonparametric Beta
mixtures~\cite{Ghosal-2001, Ghosal-2007, Judith-2010, Shen-2013}.
Other work~\cite{Bhattacharya-2014, Yang-2015, Yang-2016} established
minimax optimal rates for regression functions in nonparametric
regression models.  Related problems include adaptive rates for the
density in nonparametric Bayesian inference~\cite{Jonge-2010,
  Gao-2016}, and posterior contraction rates of density under
misspecified models~\cite{Kleijn-2006}.  Other popular general
frameworks for analyzing the density functions of posterior
distributions include those of Shen and Wasserman~\cite{Shen-2001},
and Walker et al.~\cite{Walker-2007}.

\subsection{From frequentist to Bayesian analysis}

The focus of this paper is on posterior convergence rates for
parameters---namely, how for parametric Bayesian models, the posterior
distribution assigns mass to certain regions of the parameter space.
Our contributions can be put into perspective by considering known
results for $M$-estimators.  In the world of frequentist statistics,
estimators based on maximizing empirically-defined objective
functions---known as $M$-estimators---play a central role.  In the
parametric setting, a generic $M$-estimator takes the form
\begin{align}
\label{eq:m-estimator-in-intro-problem-setup}  
\thetahat_{\numobs} \mydefn \mathop {\arg \max}_{\theta \in \Theta}
F_\numobs(\theta) \quad \mbox{where $F_\numobs(\theta) \mydefn
  \frac{1}{\numobs} \sum_{i = 1}^\numobs f (\theta; X_i)$, with $X_i
  \simiid \Prob$ for $i = 1, \ldots, \numobs$,}
\end{align}
while the parameters $\theta$ range over some constraint set $\Theta$,
and the real-valued function $f$ has domain $\Theta \times \Xspace$.
Maximum-likelihood is the archetypal example, obtained when $f$ is the
log likelihood.

There is now a rich and well-developed theory---one which exploits
ideas from both optimization theory and empirical process theory---for
deriving sharp non-asymptotic bounds on the difference between the
estimate $\thetahat_{\numobs}$ and the maximizer $\thetastar$ of the
population-level objective (e.g., see the
books~\cite{vanderVaart-Wellner-96,Vandegeer-2000,Wainwright_nonasymptotic}).
This theory leverages properties of the population-level objective
$F(\theta) \defn \Exs [f (\theta, X)]$ where the expectation is taken
with respect to $X \sim \Prob$.  At a high level, there are two key
steps in the analysis of an $M$-estimator: exploiting the structure of
$F$, and linking the behavior of the empirical objective $F_\numobs$
to the population objective $F$. In the simplest setting, the
population objective is strongly concave around its unique maximum
$\thetastar$.  More generally, when $F$ is differentiable, one can
consider a condition of the following type
\begin{subequations}
\label{EqnIntroConditions}  
  \begin{align}
\label{EqnLocalGrowth}    
  - \inprod{\nabla F(\theta)}{\theta - \thetastar} & \geq \psi
  (\vecnorm{\theta - \thetastar}{2}),
\end{align}
assumed to hold uniformly for all $\theta$ in a local neighborhood of
$\thetastar$.  Here $\psi$ is an increasing function on the positive
real-line, with $\psi(t) = \tfrac{\mu}{2} t^2$ being the one obtained
for a $\mu$-strongly concave function.  The second step is to relate
the empirical and population objective, for instance by establishing a
uniform bound on their gradients---say
\begin{align}
\label{EqnStochPert}
  \vecnorm{\nabla F_\numobs (\theta) - \nabla F(\theta)}{2} \leq
  \zeta(\vecnorm{\theta - \thetastar}{2}) \varepsilon_\numobs,
\end{align}
\end{subequations}
where the function $\zeta$ is again defined on the positive real line,
and $\varepsilon_{\numobs}$ measures the magnitude of the noise.

When the functions $F$ and $F_\numobs$ satisfy bounds of the
form~\eqref{EqnLocalGrowth} and~\eqref{EqnStochPert}, it can be shown
that the estimate $\thetahat_\numobs$ satisfies a bound of the form
$\|\thetahat_\numobs - \thetastar\|_2 \precsim r_\numobs$, where
$r_\numobs > 0$ is the largest positive solution to the
inequality\footnote{This solution exists and is unique under mild
regularity conditions on the pair $(\psi, \zeta)$.}
\begin{align}
\label{eq:m-estimator-in-intro-rate}
\psi(r) & \leq \varepsilon_{\numobs} \, \zeta(r).
\end{align}
This framework is very convenient to use, since optimization theory
and empirical process theory give us various tools for establishing
the local growth condition~\eqref{EqnLocalGrowth} and the stochastic
perturbation bound~\eqref{EqnStochPert}.

By using this framework with care, one can often obtain sharp results
in terms of \emph{problem dimension} $d$, in both the rate itself and
sample size lower bound needed to achieve such rates. Moreover, the
local growth condition~\eqref{EqnLocalGrowth} is relatively flexible;
for instance, it allows for models in which the Fisher information
matrix is singular (so that the function $\psi$ is \emph{not}
quadratic).  There are many different instantiations of this general
approach in past work, including various methods or establishing
growth conditions and empirical process
bounds~\cite{spokoiny2012parametric,ostrovskii2021finite}, analysis of
iterative optimization
algorithm~\cite{Siva_2017,Raaz_Ho_Koulik_2018,ma2020implicit,ho2020instability},
as well as regularized and constrained
$M$-estimators~\cite{loh2013regularized,chretien2021learning}.


\subsection{Our contributions}

Moving back to the Bayesian setup, it is natural to seek to a
similarly flexible and user-friendly method for establishing
finite-sample results for posterior contraction.  The main
contribution of this paper is do so by using the Langevin diffusion
process---a stochastic differential equation that can encode the
posterior distribution---as a lens of analysis.

There are natural parallels between our mode of analysis, and
deterministic analyses of optimization algorithms via differential
equations~\cite{su2016differential,shi2021understanding}.  To provide such
intuition, recall the $M$-estimator defined by the objective
function~\eqref{eq:m-estimator-in-intro-problem-setup}.  Under the
given conditions, its optimum $\thetastar$ can be characterized as the
limiting point of an \emph{ordinary differential equation} known as
the gradient flow, and the rate~\eqref{eq:m-estimator-in-intro-rate}
via the gradient flow dynamics for population and empirical loss
functions, respectively.  Now consider the analogous approach for
studying \emph{not} the $M$-estimator, but rather (in the Bayesian
set-up) the posterior distribution.  It is
well-known~\cite{risken1996fokker} that under mild regularity
conditions, the posterior distribution can be represented as the
stationary distribution of a \emph{stochastic differential equation}
known as the Langevin diffusion.  Consequently, just as information
about the $M$-estimator can be recovered by studying the gradient
flow, we can recover information about the posterior distribution by
studying the Langevin diffusion.  In particular, we do so by
leveraging stochastic calculus so as to control the moments of this
diffusion process.  At a high-level, our main results involving
showing that, under assumptions of the
form~\eqref{EqnIntroConditions}, the posterior convergence rate is
governed by the inequality $\psi(r) \leq \varepsilon_{\numobs}
\zeta(r) + \frac{d}{\numobs}$.  By comparison to
inequality~\eqref{eq:m-estimator-in-intro-rate}, relevant for
$M$-estimation, we see that this inequality includes an additional
$\tfrac{d}{\numobs}$ term: it characterizes the diffusive behavior
(with dimension $d$ and sample size $\numobs$) induced from sampling
from the Gibbs measure $e^{- F_\numobs}$ as opposed to taking its
maximum.

With this overview in place, we now summarize the different classes of
contributions that are made in this paper:
\paragraph{Globally concave problems:} We begin with the simplest
setting, in which the population log-likelihood function is strongly
concave in a global sense.  Under certain regularity
conditions,\footnote{Briefly, we require the prior distribution to be
sufficiently smooth and the perturbation error between the population
and empirical log-likelihood function to be well-controlled.}  we
prove that the posterior contraction rate around the true parameter is
$(d/ n)^{1/ 2}$.  Our technique allows us to specify precise
non-asymptotic conditions on the sample size and other model
properties under which a guarantee of this type holds.  We then relax
our assumption from strongly concave to (weakly) concave, and prove
related guarantees.  We illustrate these general results for three
concrete classes of models: Bayesian non-linear regression models,
over-specified Bayesian location Gaussian mixture models, and Bayesian
logistic regression models.  Our theory reveals the influence of
different modeling assumptions on the behavior of the posterior.

\paragraph{From global to local concavity:}
In order to extend the scope of our theory, we next relax the global
nature of our conditions.  We study posterior contraction when the
population log-likelihood function $F$ is only locally concave in a
ball around $\thetastar$, thereby allowing for multi-modality.  In
this setting, we find key properties that govern the convergence rate:
the rate of growth of the population log-likelihood, and the
deviations between the gradients of the sample and population
log-likehoods.  In particular, consider a log-likelihood function such
that
  \begin{align*}
 -\inprod{\nabla F (\theta)}{\theta - \thetastar} \gtrsim
 \vecnorm{\theta - \thetastar}{2}^{\alpha + 1}, \quad \mbox{and} \quad
 \vecnorm{\nabla F_\numobs (\theta) - \nabla F (\theta) }{2} \lesssim
 \vecnorm{\theta - \thetastar}{2}^\beta \cdot \sqrt{d / n}
  \end{align*}
  for some positive values of $\alpha$ and $\beta$ with $\alpha >
  \beta$.  Our theory guarantees that the posterior convergence rate
  of parameters is given by $O \big( (d / \numobs)^{\min \{\frac{1}{1
      + \alpha} , \frac{1}{2 (\alpha - \beta)}\}} \big)$. This result
  not only recovers the classical results when the Fisher information
  is non-singular---i.e., when $\alpha = 1$ and $\beta = 0$---in a
  non-asymptotic way for a suitable range of $\numobs$, but also
  applies to a broad class of models with singular Fisher
  information---i.e., for which $\alpha > 1$ and $\beta \geq 0$. The
  proof relies on the similar diffusion process considered in the
  globally concave settings, with a modified version of the potential
  function that exhibits the same local behavior as the empirical
  log-likelihood function.

\paragraph{Guarantees for approximate posteriors computed via Langevin algorithms:}

By adapting the continuous-time arguments to a discrete-time setting,
we show contraction rate bounds for the output of the unadjusted
Langevin algorithm. Working with the local strongly convex setting, we
show that the output of Langevin algorithm satisfies contraction
bounds that (up to logarithmic factors) match the optimal posterior
contraction behavior. Compared to existing works, our result does not
put stringent assumptions on the stepsize, allowing for faster
convergence of the algorithm.
   
\paragraph{Non-asymptotic Bernstein-von-Mises (BvM) results:} Our
  final contribution is to establish two non-asymptotic BvM results
  for models with non-degenerate Fisher information. For the first
  result, we derive a non-asymptotic upper bound on the
  Kullback-Leibler (KL) divergence between the posterior distribution
  and the limiting Gaussian distribution with mean given by maximum a
  posteriori (MAP) estimate, and covariance matrix by the inverse of
  Hessian matrix of the population log-likelihood function. This bound
  scales at the order $\mathcal{O}(1/n)$ in terms of the sample size
  $\numobs$.  Second, we prove non-asymptotic tail bounds that are
  satisfied by the posterior distribution; those bounds almost match
  the tail bounds that are satisfied by the limiting Gaussian law, up
  to high-order terms. In particular, we show that the posterior mass
  concentrates within an ellipsoid whose shape is determined by the
  Hessian matrix of the population log-likelihood at $\thetastar$. We
  note that the diffusion process approach plays a central role in
  this proof: in particular, a key technical ingredient is an error
  estimate between the underlying diffusion process and an
  Ornstein-Uhlenbeck (OU) process, whose stationary distribution is
  the limiting Gaussian law in BvM theorems.

The remainder of the paper is organized as follows.  In
\Cref{sec:setup}, we set up the basic framework for Bayesian models
and introduce a diffusion process that admits posterior distribution
as its stationary distribution.  \Cref{sec:posterior_contrac} is
devoted to establishing the general results for posterior convergence
rates of parameters under various assumptions on the global concavity
of the population log-likelihood. We then study these convergence
rates under the locally concave settings of the population
log-likelihood function in \Cref{subsec:contrac_local_assumption}.
\Cref{subsec:nonasymp_BVM} is devoted to non-asymptotic BvM results
for models with non-degenerate Fisher information.  We discuss an
application of these general theories to Bayesian logistic regression
and Gaussian mixture models in \Cref{sec:examples} and other
statistical models in \Cref{sec:app-additional-examples}. We conclude
our work with a discussion in \Cref{sec:discussion} while proofs of
results in the paper are in the supplementary
material~\cite{mou2019supplementary}.


\paragraph{Notation.} In the paper, the expression $a_{n} \succsim
b_{n}$ will be used to denote $a_{n} \geq c b_{n}$ for some positive
universal constant $c$ that does not change with $n$. Additionally, we
write $a_{n} \asymp b_{n}$ if both $a_{n} \succsim b_{n}$ and $a_{n}
\precsim b_{n}$ hold. For any $n \in \mathbb{N}$, we denote $[n] =
\{1, 2, \ldots, n \}$. The notation $\sphere^{d - 1}$ stands for the
unit sphere, namely, the set of vectors $u \in \Rspace^{d}$ such that
$\enorm{ u} = 1$. For any subset $\Theta$ of $\Rspace^{d}$, $r \geq
1$, and $\varepsilon > 0$, we denote $\mathcal{N}(\varepsilon, \Theta,
\|.\|_{r})$ the covering number of $\Theta$ under $\|.\|_{r}$ norm,
namely, the minimum number of $\varepsilon$-balls under $\|.\|_{r}$
norm to cover the entire set $\Theta$. Given a positive-definite
matrix $M \succ 0$, we use $\lambda_{\max} (M)$ and $\lambda_{\min}
(M)$ to denote its largest and smallest eigenvalue, respectively, and
we use $\kappa (M) \mydefn \lambda_{\max} (M) / \lambda_{\min} (M)$ to
denote its condition number. Finally, for any $x , y \in \Rspace$, we
denote $x \vee y = \max \{x, y \}$ and $x \wedge y = \min \{x, y\}$.


\section{Background and problem formulation}
\label{sec:setup}

This section is devoted to background material along with formulation
of the problems studied in this paper.  We first set up the problem of
studying convergence rates for posterior distributions over parameters
in \Cref{subsec:posterior_parameter}, and provide background on its
representation as the stationary distribution of a Langevin diffusion
process in in \Cref{subsec:diffusion_posterior}. Finally, we define
the population likelihood function, and introduce various smoothness
conditions in \Cref{subsec:empirical_population}.


\subsection{Posterior contraction rates for parameters}
\label{subsec:posterior_parameter}

Consider a parametric family of distributions $\{P_\theta \mid \theta
\in \paraspace\}$.  Throughout the paper, we assume that each
distribution $P_\theta$ has density $p_\theta$ with respect to the
Lebesgue measure.  Let $\DataX \mydefn (X_1, \ldots, X_\numobs)$ be a
sequence of random variables drawn $\mathrm{i.i.d.}$ from
$P_{\thetastar}$, where $\thetastar \in \paraspace$ is the true
parameter, albeit unknown.  Given a prior $\prior$ over the parameter
space, we define the the log-likelihood
\begin{align}
\label{EqnEmpiricalLike}  
 \loglihood_{n}(\theta) \mydefn \frac{1}{n} \sum_{i = 1}^n \log
 p_\theta (X_i), \quad \mbox{along with the posterior} \quad
 \posterior \left(\theta \mid \DataX \right) \mydefn \tfrac{ e^{n
     F_{n}( \theta)} \prior( \theta)} {\int_{\paraspace} e^{n F_{n}(
     u)} \prior( u) du}.
\end{align}
As the sample size $\numobs$ increases, we expect that the posterior
distribution will concentrate more of its mass over increasingly
smaller neighborhoods of the true parameter $\thetastar$.  Posterior
contraction rates allow us to study how quickly this concentration of
mass takes place.  In particular, for a given norm, we study the
posterior mass of a ball of the form $\|\theta - \thetastar\| \leq
\rho$ for a suitably chosen radius $\rho > 0$.  For a given $\delta
\in (0,1)$, our goal is to prove statements of the form $\posterior
\big( \|\theta - \thetastar\| \geq \rho(\numobs, \usedim, \delta) \mid
\DataX \big) \leq \delta$, with probability at least $1 - \delta$ over
the randomly drawn data $\DataX$.  Our interest is in the scaling of
the radius $\rho(\numobs, \usedim, \delta)$ as a function of sample
size $\numobs$, problem dimension $\usedim$, and the error tolerance
$\delta$, as well as other problem-specific parameters.


\subsection{From diffusion processes to the posterior distribution}
\label{subsec:diffusion_posterior}

The analysis of this paper relies on a well-known connection between
the posterior distribution and a particular stochastic differential
equation (SDE) known as the Langevin diffusion.  For a parameter
$\beta > 0$, the Langevin diffusion can be written as
\begin{align}
  \label{EqnLangevinSDE}
  d \theta_t = - \nabla U(\theta_t) dt + \sqrt{\tfrac{2}{\beta}} \; d
  B_t,
\end{align}
where $(B_t, t \geq 0)$ is a standard $\usedim$-dimensional Brownian
motion~\cite{MR1725357}, and $U: \real^\usedim \rightarrow \real$ is
known as the potential function.  Suppose that we impose the following
regularity conditions on the potential: (a) its gradient $\nabla U$ is
locally Lipschitz, and (b) its gradient satisfies the inequality
$\inprod{\nabla U (\theta)}{\theta} \geq c_1 \vecnorm{\theta}{2} -
c_2$ \mbox{for any $\theta \in \real^d$,} for some strictly positive
constants $c_1, c_2$.  Under these conditions, by known results on
general Langevin diffusions~\cite{bakry2008simple}, the solution to
the Langevin diffusion~\eqref{EqnLangevinSDE} exists and is unique in
the strong sense.  Furthermore, the density of $\theta_t$ converges in
$\mathbb{L}^2$ to the stationary distribution with density
proportional to $e^{- \beta U}$.

In the context of Bayesian inference, we can apply this argument to
the potential function $U_n(\theta) \mydefn - n \loglihood_n(\theta) -
\log \prior(\theta)$.  Doing so will require us to verify that $U_n$
satisfies the requisite regularity conditions.  Assuming this
validity, we are guaranteed that the posterior distribution
$\posterior (\theta \mid \DataX)$ is the stationary distribution of
the SDE
\begin{align}
 \label{eq-diffusion-main}
  d \theta_t = \tfrac{1}{2} \nabla \loglihood_{n} ( \theta_t) dt +
  \tfrac{1}{2n} \nabla \log \prior (\theta_t ) dt +
  \tfrac{1}{\sqrt{n}} dB_t,
\end{align}
with initial condition $\theta_0 = \thetastar$. Moreover, the density
of $\theta_t$ converges in $\mathbb{L}^2$ to the posterior density.

It should be noted that this SDE-based representation of the posterior
underlies various algorithms for drawing samples from the posterior
distribution; we refer the reader to the
papers~\cite{dalalyan2017theoretical,durmus2017nonasymptotic,durmus2019high}
for some recent state-of-the-art results in this direction.  In this
paper, we exploit this SDE-based representation for statistical
analysis (as opposed to efficient computation).  In particular, by
characterizing the behavior of the process $(\theta_t, t \geq 0)$ as a
function of time, we can obtain bounds on the posterior distribution
by taking limits. The following proposition guarantees the convergence
of the moments based on a uniform-in-time moment upper bound and a
convergence in total variation distance.
\begin{proposition}
\label{prop:keybound}
Consider a sequence of distributions $(\pi_t)_{t \geq 0}$ on $\real^d$
such that $\totalvariation (\pi_t, \pi^*) \rightarrow 0$, and suppose
that $\sup_{t \geq 0} \Exs_{\pi_t} \left[ \vecnorm{X}{2}^p \right] < +
\infty$ and $ \Exs_{\pi^*} \left[ \vecnorm{X}{2}^p \right] < + \infty$
for any even integer $p \geq 2$.  We then have $\lim \limits_{t
  \rightarrow + \infty} \Exs_{\pi_t} \left[ \vecnorm{X}{2}^p \right] =
\Exs_{\pi^*} \left[ \vecnorm{X}{2}^p \right]$.
\end{proposition}
\noindent See \Cref{subsec:proof-prop-keybound} in our
supplementary material~\cite{mou2019supplementary} for the proof of
this proposition.

Given this limiting behavior, we can establish posterior contraction
rates for the parameters by controlling the moments of the diffusion
process $\{\theta_t\}_{t \geq 0}$.  The main theoretical results of
this paper are obtained by following this general roadmap.


\subsection{From empirical to population likelihood}
\label{subsec:empirical_population}

Before proceeding to our main results, let us introduce some
additional definitions and conditions.  A useful notion for our
analysis is the population log-likelihood $\loglihood$.  It
corresponds to the limit of log-likelihood function $\loglihood_{n}$,
as previously defined in equation~\eqref{EqnEmpiricalLike}, as the
sample size $n$ goes to infinity---viz.
\begin{align}
\label{EqnPopulationLike}
  \loglihood (\theta) \mydefn \mathbb{E} \brackets{ \log p_\theta
    (X)},
\end{align}
where the expectation is taken with respect to $X \sim
P_{\thetastar}$.  Throughout the paper, we impose the following
smoothness conditions on the population log-likelihood $\loglihood$
and the log prior density $\log \prior$:
\begin{enumerate}[label={\bf{(\Alph*)}}]
\item\label{item:smooth_population_condition} There exist positive
  constants $\smooth$ and $\smoothprior$ such that for any $\theta_1,
  \theta_2 \in \real^\dims$, we have
  \begin{align*}
    \vecnorm{ \nabla \loglihood(\theta_1) - \nabla
      \loglihood(\theta_2) }{2} \leq \smooth \vecnorm{\theta_1 -
      \theta_2}{2}, \quad \mbox{and} \quad \vecnorm{ \nabla \log
      \prior (\theta_1) - \nabla \log \prior (\theta_2) }{2} \leq
    \smoothprior \vecnorm{\theta_1 - \theta_2}{2}.
  \end{align*}

\item\label{item:smooth_log_prior} There exists a non-negative
  constant $ \boundconsprior \geq 0$ such that
  \begin{align*}
    \inprod{\nabla \log \prior (\theta)}{\theta - \thetastar} \leq
    \boundconsprior \vecnorm{\theta - \thetastar}{2} \qquad \mbox{for
      all $\theta \in \Rspace^{d}$.}
    \end{align*}
\end{enumerate}

Although the constant $\boundconsprior$ in
Assumption~\ref{item:smooth_log_prior} can depend on $\thetastar$, we
suppress this dependence so as to keep the notation streamlined.  When
the function $\log \prior$ is globally Lipschitz (so that $\|\nabla
\log \prior(\theta)\|_2$ is uniformly bounded),
Assumption~\ref{item:smooth_log_prior} is automatically satisfied, but
it only requires a one-sided control, allowing for important examples
such as Gaussian prior.

The above conditions are relatively mild, and we provide a number of
examples in the sequel for which they are satisfied.


\section{Results under global conditions}
\label{sec:posterior_contrac}

We now turn to our first set of results, which provide bounds on
posterior contraction rates under global concavity conditions on the
population log-likelihood function.  Results under milder local
conditions are given in \Cref{sec:without_global_concavity} to follow.

In \Cref{subsec:strongly_convex}, we present a result
(~\cref{thm-one-point-SC}) that establishes the posterior convergence
under strong concavity.  \Cref{subsec:weakly_convex} answers the same
question when the population log-likelihood is only weakly concave;
see the statement of \cref{theorem-main-weakly-convex}.


\subsection{Posterior contraction under strong concavity}
\label{subsec:strongly_convex}

We begin with results under strong concavity conditions.  For this
part, the following assumptions underlie our analysis:
\begin{enumerate}[label={\bf{(S.\arabic*)}}]
\item
\label{item:strong_concavity_population}
There exists a scalar $\strongconvex > 0$ such that
  \begin{align*}
   - \inprod{ \nabla \loglihood(\theta)}{ \thetastar - \theta } \geq
   \strongconvex \vecnorm{ \theta - \thetastar }{2}^2 \quad \mbox{for
     any $\theta \in \real^\dims$.}
  \end{align*}
\item \label{item:strong_concavity_deviation}
There exist non-negative functions $\noiseone$ and $\noisetwo$ that
map from $\mathbb{N} \times (0,1]$ to $\real_{+}$ such that for any
  radius $r > 0$ and any $\delta \in (0,1)$, we have
\begin{align*}
  \sup_{\theta \in \ball (\thetastar, r)} \vecnorm{\nabla
    \loglihood_{n} (\theta) - \nabla \loglihood(\theta) }{2} \leq
  \noiseone(n, \delta) r + \noisetwo(n, \delta) \qquad \mbox{with
    prob. at least $1 - \delta$.}
\end{align*}

\end{enumerate}

Assumption~\ref{item:strong_concavity_population} is a standard strong
concavity condition of function $\loglihood$ around $\thetastar$,
whereas Assumption~\ref{item:strong_concavity_deviation} provides
uniform control on the gradients of the population and sample
log-likelihoods.  It is important to note that these assumptions,
along with other assumptions to follow, \emph{do not} require the
data-generating distribution $P$ to belong to the specified
parameteric class.  Indeed, the results throughout this paper apply to
both well-specified and mis-specified models. In the latter case, the
parameter $\thetastar$ is typically the KL-projection of the true
model, i.e., $\thetastar \in \arg \min_{\theta \in \Theta}
\kull{\Prob}{\Prob_{\theta}}$.

Given the above assumptions, we are ready to state our first result
regarding the posterior convergence rate of parameters for a strongly
concave population log likelihood:
\begin{theorem}
\label{thm-one-point-SC}
Suppose that
Assumptions~\ref{item:smooth_population_condition},~\ref{item:smooth_log_prior},~\ref{item:strong_concavity_population},
and~\ref{item:strong_concavity_deviation} hold.  Then there is a
universal constant $\unicon$ such that for any $\delta \in (0,1)$ and
any sample size $n$ for which \mbox{$\noiseone(n, \delta) \leq
  \frac{\mu}{6}$,} we have
\begin{align*}
  \posterior \Big( \vecnorm{\theta - \thetastar}{2} \geq \unicon
  \sqrt{ \tfrac{d}{\numobs \strongconvex}} + \tfrac{\boundconsprior}{n
    \strongconvex} + \tfrac{ \noisetwo(n, \delta)}{\strongconvex} +
  \unicon \sqrt{\tfrac{\log(1/ \delta)}{\numobs \strongconvex}} \;
  \Big| \; \DataX \Big) & \leq \delta
\end{align*}
with probability $1 - \delta$, taken with respect to the random
observations $\DataX$.
\end{theorem}
\noindent
See \cref{subsec:strongly_convex_proof} for the proof of
\cref{thm-one-point-SC}.

This result guarantees posterior convergence at the rate $(d/ n)^{1/
  2}$ when the log likelihood is strongly concave.  To be clear, such
rate of posterior contraction for the parameters can be derived from
the asymptotic behavior of the posterior distribution via the
classical Bernstein-von-Mises theorem.  However, the guarantee in
\cref{thm-one-point-SC} is non-asymptotic, and provides
explicit dependence of the rate on other model parameters, including
$B$ and $\mu$, both of which might vary as a function of
$\thetastar$. At the moment, we do not know whether the dependence of
these parameters is optimal.  This guarantee is valid as long as the
error term $\noiseone (\numobs, \delta)$ is less than an absolute
constant; such a bound typically holds as long as $\numobs \gtrsim
\usedim$.  In \cref{thm:non-asymp-credible-set} to follow, we
also provide near-optimal non-asymptotic contraction bounds on the
posterior distribution that nearly match the exact shape of the
posterior distribution.

Although our set-up is focused on simple sampling models, it should be
noted that our method is sufficiently flexible so as to accommodate
certain non-$\mathrm{i.i.d.}$ forms of sampling, along with
mis-specified models. After the first version was posted, Mazumdar et
al.~\cite{mazumdar2020approximate} used a variant of this result to
study the posterior contraction rates for Thompson sampling in
contextual bandits. In their problem, the data are adaptively
collected instead of being $\mathrm{i.i.d.}$, and the empirical
process bound~\ref{item:strong_concavity_deviation} can be verified
using martingale concentration inequalities.

\vspace{0.5 em}
\noindent
\textit{Proof overview:} As described in our motivating introduction,
the proof of \cref{thm-one-point-SC} is based on analyzing the
Langevin diffusion $(\theta_t)_{t \geq 0}$ from
equation~\eqref{eq-diffusion-main}. The key idea---one which plays a
key role in the proofs throughout the entire paper---is the use of a
Lyapunov function $\lyap_t$. In particular, we use It\^{o} calculus to
track the growth of $\lyap_t$ over time $t$.  By taking $t \rightarrow
+ \infty$, the bounds on the Lyapunov function carry over to the
stationary distribution.

In more detail, we prove \cref{thm-one-point-SC} using the Lyapunov
function $\lyap_t \mydefn \tfrac{1}{2} e^{\frac{\strongconvex t}{2}}
\vecnorm{\theta_t - \thetastar}{2}^2$ and bounding the moments of this
stochastic process. Some calculation leads to the upper bound
\begin{align*}
 e^{\frac{\strongconvex t}{2}} \vecnorm{\theta_t - \thetastar}{2}^2
 \leq \tfrac{1}{\sqrt{\numobs}} \int_0^t e^{\strongconvex t / 2}
 \inprod{\theta_s - \thetastar}{d B_s} + c\left(
 \tfrac{\usedim}{\numobs} + \frac{\noisetwo^2 (n,
   \delta)}{\strongconvex} + \tfrac{\boundconsprior}{\numobs^2}
 \right) \cdot \tfrac{e^{\strongconvex t / 2}}{\strongconvex}.
\end{align*}
The last term is deterministic, and gives rise to the terms
$\sqrt{\frac{d}{\mu n}} + \frac{\noisetwo (\numobs, \delta)}{\mu} +
\frac{\boundconsprior}{\numobs \mu}$ in
\cref{thm-one-point-SC}.  Taking expectations on both sides of
the bound yields a non-asymptotic bound on the second moment of the
posterior distribution.  In order to provide a high probability bound,
as stated in the claim, we control the martingale term by invoking the
Burkholder-Davis-Gundy (BDG) inequality for continuous-time
martingales; doing so produces the term
$\sqrt{\tfrac{\log(1/\delta)}{\numobs}}$ in our bound.  The full proof
is given in \cref{subsec:strongly_convex_proof}.

\subsection{Posterior contraction under weak concavity}
\label{subsec:weakly_convex}

~\cref{thm-one-point-SC} requires global strong concavity, which
is relatively strong. In this section, we relax this assumption in two
ways: we relax the growth condition locally around $\thetastar$ so as
to allow for weak concavity, and the global behavior need not coincide
with this local behavior.  Weakly concave log-likelihoods arise for
singular problems, for which the Fisher information matrix at the true
parameter $\thetastar$ is rank-degenerate. Examples of such singular
problems include Bayesian non-linear regression models with certain
choices of link functions~\cite{McCullagh_generalized}, as well as
over-specified mixture models~\cite{Rousseau-2011}, in which the
fitted mixture model has more components than the true mixture
distribution. The mismatch between local and global concavity
conditions exists not only in such models, but also in non-singular
problems such as Bayesian logistic regression. We discuss implications
of these examples in the supplementary
material~\cite{mou2019supplementary}. Note that the results in this
section still require the global maximum $\thetastar$ to be unique, so
that the posterior is unimodal. This requirement is removed in the
analysis of the next section. \\

Our analysis in the weakly concave setting is based on the following
assumptions: \\

\begin{enumerate}[label={\bf{(W.1)}}]
\item\label{item:weak_concavity_population} There exists a convex,
  non-decreasing function $\weakcon: [0, +\infty) \rightarrow \real$
    such that 
  \begin{align*}
    - \inprod{ \nabla \loglihood(\theta)}{ \theta - \thetastar } \geq
    \weakcon( \vecnorm{ \theta - \thetastar }{2} ) \qquad \mbox{for
      any $\theta \in \real^\dims$.}
  \end{align*}
\end{enumerate}
Assumption~\ref{item:weak_concavity_population} characterizes the weak
concavity of the function $\loglihood$ around the global maxima
$\thetastar$.  This condition can hold when the log likelihood is
locally strongly concave around $\thetastar$ but only weakly concave
in a global sense, or it can hold when the log likelihood is weakly
concave but nowhere strongly concave.  An example of the former type
is the logistic regression model analyzed
in~\Cref{sec:logistic_regress}, whereas an example of the latter type
is given by certain kinds of non-linear regression models, as analyzed
in~\Cref{sec:single_index}.

Our next assumption controls the deviation between the gradients of
the population and sample likelihoods, and involves a failure
probability $\delta \in (0,1)$:
\begin{enumerate}[label={\bf{(W.2)}}]
\item\label{item:weak_concavity_deviation} There exist a function
  $\noise: \mathbb{N} \times (0,1] \mapsto \real_{+}$ and a
  non-decreasing function $\perturb: \real \rightarrow \real$ with
  that $\perturb (0) \geq 0$ such that for any radius $r > 0$, we
  \begin{align*}
    \sup_{\theta \in \ball (\thetastar, r)} \vecnorm{\nabla
      \loglihood_{n} (\theta) - \nabla \loglihood ( \theta) }{2} \leq
    \noise(n, \delta) \perturb(r)  \qquad
  \mbox{with prob. at least $1 - \delta$.}    
  \end{align*}
\end{enumerate}
Note that the function $\perturb$ can depend on the sample size $n$
and other model parameters; such dependence arises in our analysis of
over-specified Bayesian mixture model given in
\cref{sec:over_Gaussian_mixture}.  In this main text, we suppress this
dependence so as to keep the notation streamlined.

The previous conditions involved two functions, namely $\weakcon$ and
$\perturb$. We let $\inver : \real_+ \rightarrow \real$ denote the
inverse function of the strictly increasing function $r \mapsto r
\perturb (r)$. Our third assumption imposes certain inequalities on
these functions and their derivatives:
\begin{enumerate}[label={\bf{(W.3)}}]
\item
  \label{item:weak_concavity_growth_conditions}
The function $r \mapsto \weakcon( \inver (r))$ is convex, and
$\weakcon$ and $\perturb$ satisfy the differential inequalities
\begin{align*}
r \weakcon'(r) \perturb (r) & \stackrel{(i)}{\geq} r \weakcon (r)
\perturb' (r) + \weakcon (r) \perturb (r), \quad \mbox{and} \\
r^2 \weakcon'' (r) \perturb (r) + r \weakcon'(r) \perturb (r) &
\stackrel{(ii)}{\geq} 3 \weakcon(r) \perturb(r) + r^2 \weakcon(r)
\perturb''(r) \quad \mbox{for all $r > 0$.}
\end{align*}

\end{enumerate}
\noindent These differential inequalities are needed controlling the
moments of the diffusion process $\{\theta_{t}\}_{t > 0}$ in
equation~\eqref{eq-diffusion-main}.  In our discussion of concrete
examples, we provide instances for which they are satisfied.

Our result involves a certain fixed point equation that depends on the
parameters and functions in our assumptions.  In particular, for any
tolerance parameter \mbox{$\delta \in (0,1)$} and \mbox{sample size
  $n$,} consider the following fixed point equation in the variable $z
> 0$:
\begin{align}
    \label{eq:key_equation}
    \psi (z) = \noise(n, \delta) \zeta (z) z +
    \tfrac{\boundconsprior}{\numobs} z + \tfrac{d}{\numobs} +
    \tfrac{\log(1/\delta)}{\numobs}.
\end{align}
In order to ensure that this equation has a unique positive solution,
our final assumption imposes certain condition on the growth of the
functions $\weakcon$ and $\perturb$:\\

\begin{enumerate}[label={\bf{(W.4)}}] 
\item\label{item:weak_concavity_tail} The limit $\lim \inf \limits_{z
  \rightarrow + \infty} \frac{\psi (z)}{ z \zeta (z)}$ is strictly
  positive, and the sample size $n$ and tolerance parameter $\delta
  \in (0, 1)$ are such that \mbox{$\varepsilon (n, \delta) < \lim \inf
    \limits_{z \rightarrow + \infty} \frac{\psi (z)}{ z \zeta (z)}$.}
\end{enumerate}

\medskip 
\noindent With this set-up, we are now ready to state our second main
result:
\begin{theorem}
\label{theorem-main-weakly-convex}
Suppose that Assumptions~\ref{item:smooth_population_condition},
~\ref{item:smooth_log_prior},
and~\ref{item:weak_concavity_population}---~\ref{item:weak_concavity_growth_conditions}
hold.  Then for any given sample size $n$ and $\delta \in (0,1)$ such
that Assumption~\ref{item:weak_concavity_tail} holds,
equation~\eqref{eq:key_equation} has a unique positive solution
$z^*(n, \delta)$ such that
  \begin{align}
    \label{EqnWeaklyConvexPosterior}
    \posterior \Big( \vecnorm{\theta - \thetastar}{2} \geq z^*(n,
    \delta) \; \mid \; \DataX \Big) \leq \delta \quad \mbox{with
      probability $1 - \delta$ w.r.t. $\DataX$.}
\end{align}
\end{theorem}
\noindent See \cref{subsection:weakly_convex_proof} for the proof of
\cref{theorem-main-weakly-convex}.

A few comments are in order. First, the convergence
guarantee~\eqref{EqnWeaklyConvexPosterior} depends on the weak
convexity function $\weakcon$ and the perturbation function $\perturb$
through the non-linear equation~\eqref{eq:key_equation}.  See the
proof sketch below for the origins of this equation. Second, at least
in general, it is not possible to compute an explicit form for the
positive solution $z^{*}(n, \delta)$ to the non-linear
equation~\eqref{eq:key_equation}.  However, for certain forms of the
function $\weakcon$ and $\perturb$, we can derive a relatively simple
upper bound.  For instance, given some positive parameters $(\alpha,
\beta)$ such that $\alpha > \beta$, suppose that these functions
are defined locally, in a interval above zero, as follows:
\begin{subequations}  
\begin{align}
\label{eq:special_funcs}
\weakcon(r) = r^{\alpha + 1}, \quad \text{and} \quad \perturb(r) =
r^{\beta} \qquad \mbox{for all $r$ in some interval $[0,\bar{r})$.}
\end{align}
Moreover, suppose that the perturbation function takes the form
\begin{align}
\label{eq:special_per_func}
\noise(n, \delta) = \sqrt{\parenth{ d + \log( \tfrac{1}{\delta})}/ n}.
\end{align}
\end{subequations}
As shown in in \cref{sec:examples}, these particular forms
arise in several statistical models, including Bayesian logistic
regression and over specified Bayesian Gaussian mixture models.
Under these conditions, we have the following simple upper bound:
\begin{corollary}
\label{corollary:spec_case_weak_concave}
Assume that the functions $\weakcon$, $\perturb$ have the local
behavior~\eqref{eq:special_funcs}, and the perturbation term
$\noise(n, \delta)$ has the form~\eqref{eq:special_per_func}.  If, in
addition, the global forms of $\weakcon$ and $\perturb$ satisfy
Assumption~\ref{item:weak_concavity_growth_conditions}, then the
scalar $z^{*}(n, \delta)$ from \cref{theorem-main-weakly-convex}
satisfies the bound $z^{*}(n, \delta) \leq \unicon \; \parenth{
  \tfrac{d + \log(1/ \delta) }{n}} ^{\frac{1}{2 \parenth{ \alpha -
      \beta}}} \vee \parenth{ \tfrac{d + \log(1/
    \delta)}{n}}^{\frac{1}{\alpha + 1}} +
\parenth{\tfrac{\boundconsprior}{\numobs}}^{\frac{1}{\alpha}}$.
\end{corollary}

Note that Corollary~\ref{corollary:spec_case_weak_concave} ensures
that the posterior has the following contraction property
\begin{align}
\label{eq:poster_rate_special_funcs}
\posterior \Big( \|\theta - \thetastar\|_2 \geq \unicon \parenth{
  \tfrac{d + \log(1/ \delta)}{n}}^{\frac{1}{2 \parenth{ \alpha -
      \beta}} \wedge \frac{1}{\alpha + 1}} +
\parenth{\tfrac{\boundconsprior}{\numobs}}^{\frac{1}{\alpha}} \;
\biggr| \; \DataX \Big) \leq \delta \quad
\mbox{with prob. $1 - \delta$}
\end{align}
with respect to the training data. The posterior convergence rate
scales as $(d/ n)^{\frac{1}{2 (\alpha - \beta)}}$ when $\alpha \geq 2
\beta + 1$. On the other hand, this rate becomes $(d/
n)^{\frac{1}{\alpha + 1}}$ when $\alpha < 2 \beta + 1$.

\vspace{0.5 em}
\noindent

\textit{Proof overview:} Similar to the proof
of~\cref{thm-one-point-SC}, the proof
of~\cref{theorem-main-weakly-convex} is based on tracking the behavior
of a Lyapunov function along the trajectory of diffusion
process~\eqref{eq-diffusion-main}. In doing so, we study the moments
$\Exs \left[ \vecnorm{\theta_t - \thetastar}{2}^p \right]$ for $p \geq
2$. Unlike the strongly concave case, however, the negative term in
the expression is no longer the $p$-th moment itself, but rather a
quantity depending on the local geometry of the population
log-likelihood $\loglihood$.  More precisely, we adopt the Lyapunov
function $\lyap_t \mydefn \Exs \left[ \vecnorm{\theta_t -
    \thetastar}{2}^{p - 2} \psi (\vecnorm{\theta_t - \thetastar}{2})
  \right]$, where $\psi$ is the function from
Assumption~\ref{item:weak_concavity_population}. Under the conditions
on the functions $\psi$ and $\zeta$ given in
Assumption~\ref{item:weak_concavity_growth_conditions}, the time
derivative of the $p^{th}$ moment $\Exs \left[ \vecnorm{\theta_t -
    \thetastar}{2}^p \right]$ can then be controlled as a function of
$\lyap_t$. Since the moment converges to a finite quantity when $t
\rightarrow + \infty$, its time derivative cannot converge to a
positive number. Using the convexity of $\psi$, the bound on the
Lyapunov function leads to the inequality
\begin{align*}
\lim_{t \rightarrow +\infty} \left( \mathbb{E} \left ( \vecnorm{
  \theta_t - \thetastar }{2}^p\right) \right)^{ \frac{1}{p} } \leq
z_p^*,
\end{align*}
where $z_p^*$ is the unique positive solution to the equation $\psi
(z) = \noise(n, \delta) \zeta (z) z + \frac{\boundconsprior}{\numobs}
z + \frac{p + d }{ n}$.  In light of the above result and
\cref{prop:keybound}, when $p$ is of the order $\log(1/ \delta)$, we
obtain the posterior convergence
rate~\eqref{EqnWeaklyConvexPosterior}.  The full proof is given in
\cref{subsection:weakly_convex_proof}.

\section{Results under local conditions}
\label{sec:without_global_concavity}

In this section, we present results without the global conditions on
the population log-likelihood function in
\cref{subsec:strongly_convex} and
\cref{subsec:weakly_convex}.  Our set-up allows the posterior
distribution to be multi-modal in nature; only local growth conditions and empirical process bounds around $\thetastar$ are needed in our
analysis. In \cref{subsec:contrac_local_assumption}, we
establish the posterior convergence rate of parameters under mild
local conditions on the population and empirical log-likelihood
functions, and also extend the results to Langevin algorithms. Finally, we provide non-asymptotic Bernstein-von-Mises
results in \cref{subsec:nonasymp_BVM}.

\subsection{Non-asymptotic contraction rates under local assumptions}
\label{subsec:contrac_local_assumption}
We begin with posterior concentration results. When the log-likelihood function satisfies suitable growth conditions and perturbation bounds in a local neighborhood of $\thetastar$, we show posterior convergence rates conditionally on such a local ball. We further extend our results to contraction bounds of the last iterate of Langevin algorithm, again under such local conditions.

\subsubsection{Conditional posterior contraction}
\label{subsec:unique_global}

For some local radius $\localradius > 0$, we make the following
assumptions with the population and sample log-likelihood functions
within the local region $\ball (\thetastar, \localradius)$:
\begin{enumerate}[label={\bf{(LWC.1)}}]
\item
  \label{item:without_global_geometry}
  There exist $\alpha \geq 0$, $\lowerweakconcavemu > 0$ and
  $\smalloffset \geq 0$ such that for $\theta \in \ball (\thetastar,
  \localradius)$, we have
\begin{align*}
  \inprod{\nabla \loglihood (\theta)}{\theta - \thetastar} \leq -
  \lowerweakconcavemu \vecnorm{\theta - \thetastar}{2}^{\alpha + 1} +
  \smalloffset.
    \end{align*}
\end{enumerate}
Assumption~\ref{item:without_global_geometry} characterizes the local growth of the function
$\loglihood$ around the global maximum $\thetastar$. We note that in
either the well-specified case ($\Prob = \Prob_{\thetastar}$), or the
mis-specified case when $\Prob_\thetastar$ is the KL-projection of
$\Prob$, it follows from the optimality condition that this assumption
is satisfied with $\smalloffset = 0$.  Relaxing to values
$\smalloffset > 0$ allows us to accommodate mis-specified cases in
which $\thetastar$ is not the exact projection, or
situations in which variants of the 
log-likelihood are used. See \cref{sec:ibra-kham} in the
supplementary material for an application of this result with
$\smalloffset > 0$ to a Bayesian location model with singular
densities on the density function in the Ibragimov-Khasminskii
sense~\cite{ibragimov1979asymptotic}.

The parameters $(\alpha, \lowerweakconcavemu)$ control the rate of
local growth of the log-likelihood. When $\alpha = 1$, the function
$\loglihood$ is locally strongly concave around $\thetastar$, so that
one should expect posterior convergence at the rate given in
~\cref{thm-one-point-SC}. On the other hand, when $\alpha > 1$, the
log likelihood is only weakly concave in a local neighborhood; such
behavior arises when the Fisher information matrix at $\thetastar$ is
degenerate.  Concrete instances of such degenerate models include
over-specified mixture distributions, and certain types of non-linear
regression models. See \cref{sec:over_Gaussian_mixture} in the
supplementary material for discussion of these specific examples.

Our next assumption concerns the deviation between the gradients of
population and sample log-likelihood functions within the ball $\ball
(\thetastar, \localradius)$. \\
\begin{enumerate}[label={\bf{(LWC.2)}}]
    \item\label{item:without_global_deviation} There exists $\beta \in
      (-1, \alpha)$ and $\varepsilon (n, \delta) > 0$ such that with
      probability $1 - \delta$, we have
    \begin{align*}
        \sup_{\theta \in \ball (\thetastar, \localradius)}
        \frac{\vecnorm{\nabla \loglihood_n (\theta) - \nabla
            \loglihood (\theta)}{2}}{\vecnorm{\theta -
            \thetastar}{2}^\beta} \leq \varepsilon(n, \delta) .
    \end{align*}
\end{enumerate}
Note that the assumption~\ref{item:without_global_deviation} requires that $\beta < \alpha$, which means that the variance of score functions cannot decay too quickly around a neighborhood of $\thetastar$. This condition is needed to make the presentation simpler. On the other hand, when $\beta \geq \alpha$, exact recovery of $\thetastar$ is possible, and the Bayesian approach may lead to sub-optimal results. A detailed development for this setting is left for the future work.

Under Assumptions~\ref{item:without_global_geometry}
and~\ref{item:without_global_deviation}, we have the following result
on posterior convergence for the parameters.  It involves the radius
$r_n$ given by
\begin{align*}
r_n \mydefn \left(\frac{\log(1/\vartheta) + d}{n \lowerweakconcavemu}
+ \frac{\smalloffset}{\lowerweakconcavemu} \right)^{\frac{1}{\alpha +
    1}} + \left( {\frac{2 \varepsilon (n,
    \delta)}{\lowerweakconcavemu}}\right)^{\frac{1}{\alpha - \beta}} + \left(\frac{\boundconsprior}{n
  \lowerweakconcavemu} \right)^{\frac{1}{\alpha}},
    \end{align*}
where $\vartheta \in (0,1)$ is a pre-specified tolerance parameter.
\begin{theorem}
\label{thm:local-weak-convex}
Assume that
Assumptions~\ref{item:without_global_geometry},~\ref{item:without_global_deviation}
and~\ref{item:smooth_log_prior} hold. For any given $\hackpar \in
(0,1)$ and pair ($n, \delta)$ such that $\varepsilon(n, \delta) \leq
\frac{\lowerweakconcavemu}{2} (\frac{\localradius}{2})^{\alpha -
  \beta}$, we have
\begin{align}
\label{EqnWeirdLower}  
  \posterior \left( \vecnorm{\theta - \thetastar}{2} \leq r_n \mid
  X_1^n \right) \geq \big( 1 - \hackpar \big) \; \posterior \left(
  \ball (\thetastar, \localradius / 2) \mid X_1^n \right).
    \end{align}
\end{theorem}
\noindent
See~\cref{subsec:proof:thm:local-weak-convex} for the proof. \\

\vspace{0.5 em}
\noindent
\textit{Remark on $\posterior \left( \ball (\thetastar, \localradius /
  2) \mid X_1^n \right)$:} As shown in the
bound~\eqref{EqnWeirdLower}, the posterior convergence rate depends on
the non-asymptotic behavior of the probability mass $\posterior \left(
\ball (\thetastar, \localradius / 2) \mid X_1^n \right)$ that the
posterior assigns to a local ball around $\thetastar$. In particular,
given a (potentially non-sharp) non-asymptotic posterior contraction
that ensures concentration within a \emph{constant-radius ball} $\ball
(\thetastar, \localradius / 2)$, ~\cref{thm:local-weak-convex}
automatically improves it to a concentration result with the
\emph{optimal} radius. Moreover, in the non-identifiable case where
the global maxima of the population log-likelihood function
$\loglihood$ is not unique, one can still apply
~\cref{thm:local-weak-convex} to obtain concentration around the
(finite) set of global maxima (see Corollary~\ref{cor:final-nonconvex}
in \cref{subsec:multiple_global_maxima} for more details).

\vspace{0.5 em}
\noindent
\textit{Remarks on $r_\numobs$:} Let us consider the three different
terms in $r_\numobs$.  First, consider idealized situation in which
the empirical log-likelihood function replaced by the population one,
and we ignore the contribution from the prior $\prior$.  The
``posterior'' in this case takes the form $e^{- \numobs \loglihood}$;
note that it satisfies the contraction bounds with radius
$\left(\frac{\log (1/\hackpar) + d}{n \lowerweakconcavemu} +
\frac{\smalloffset}{\lowerweakconcavemu} \right)^{\frac{1}{\alpha +
    1}}$. The second term $ \left( {\frac{2 \varepsilon (n,
    \delta)}{\lowerweakconcavemu}}\right)^{\frac{1}{\alpha - \beta}}$
characterizes the effect of using empirical data instead of
population-level functions. This term coincides with the
non-asymptotic rates for the maximal likelihood estimator in a local
neighborhood of $\thetastar$. Finally, the last term
$\left(\frac{\boundconsprior}{n \lowerweakconcavemu}
\right)^{\frac{1}{\alpha}}$ characterizes the effect of the prior
density $\prior$. Under mild regularity conditions on $\prior$, this
term is of higher order compared to the first term $\left(\frac{\log
  (1/\hackpar) + d}{n \lowerweakconcavemu} \right)^{\frac{1}{\alpha +
    1}}$, as in the case of \cref{thm-one-point-SC}.

\vspace{0.5 em}

\noindent \textit{Remarks on the proof:} Let us provide some
high-level comments on the proof.  The argument involves constructing
a Lyapunov function similar to that used
\cref{theorem-main-weakly-convex}. However, since the
condition~\ref{item:without_global_geometry} holds only in a small
ball $\ball (\thetastar, \localradius)$, the leading term
$\inprod{\nabla \loglihood (\theta)}{\theta - \thetastar}$ in
It\^{o}'s formula \emph{cannot} be uniformly upper bounded by a
negative function of the distance $\vecnorm{\theta -
  \thetastar}{2}$. In order to overcome this issue, we first study a
modified version of the posterior distribution, and then transform the
result back to the posterior distribution itself. In particular, we construct a
probability density function $\widetilde{\posterior}$ over $\real^d$
such that:
\begin{itemize}
    \item Within the local ball $\ball (\thetastar, \localradius /
      2)$, the shape of the function $\widetilde{\posterior}$ exactly
      matches that of the true posterior $\posterior (\cdot \mid
      \DataX)$, up to a multiplicative constant.
    \item Outside the larger ball $\ball (\thetastar, \localradius)$,
      the function $\widetilde{\posterior}$ behaves as a Gaussian
      density---in particular, we have $\widetilde{\posterior}
      (\theta) \propto \exp \left( - \frac{\numobs \smooth}{2}
      \vecnorm{\theta - \thetastar}{2}^2 \right)$.
    \item In the annulus between the two balls, we interpolate between
      the two regimes so as to ensure that $\log
      \widetilde{\posterior}$ is smooth.
\end{itemize}

By applying the analysis in \cref{theorem-main-weakly-convex}
to the modified density $\widetilde{\posterior}$, one can show that
this modified density concentrates within $\ball (\thetastar,
r_\numobs)$ with high probability. Since the shapes of
$\widetilde{\posterior}$ and $\posterior (\cdot \mid \DataX)$ are
exactly the same inside $\ball (\thetastar, \localradius / 2)$, we can
prove that conditionally in the ball $\ball (\thetastar, \localradius
/ 2)$, the posterior $\posterior (\cdot \mid \DataX)$ also contracts
around $\thetastar$ with the correct radius.

\subsubsection{Contraction of approximate posterior via Langevin algorithm}
We have analyzed posterior contraction properties using the Langevin diffusion process~\eqref{EqnLangevinSDE}, upon which most posterior sampling algorithms are built. It is therefore natural to extend our techniques to the discretized Langevin process, and obtain contraction rates for the approximate posterior distribution computed via Langevin algorithm. In this section, we analyze the following forward Euler discretization of Langevin diffusion, a widely-used algorithm for computation of posterior~\cite{durmus2017nonasymptotic,durmus2017nonasymptotic,dalalyan2017theoretical}.
\begin{align}
  \theta_{k + 1} = \theta_k +  \stepsize \nabla \loglihood_\numobs (\theta_k) + \sqrt{\frac{2 \stepsize}{\numobs}} W_k, \quad \mbox{for $k = 0,1,\cdots$}
\end{align}
where $(W_k)_{k = 0,1, \cdots}$ are $\mathrm{i.i.d.}$ standard Gaussian random vectors.

 As Euler discretization can be unstable when applied to functions with growth at infinity faster than quadratic (see~\cite{roberts1996exponential}), we focus on the case where $\alpha = 1$ and $\beta = 0$. The general case, for which a more stable discretization scheme may be employed, is an important direction of future research. We also restrict our attention to algorithms with \emph{local initialization}, satisfying $\vecnorm{\theta_0 - \thetastar} \leq \localradius / 2$. Finally, we require the stepsize $\stepsize$ and the sample size $\numobs$ to satisfy the following conditions:
 \begin{align}
    \stepsize \leq \frac{\lowerweakconcavemu}{3\smoothness^2}\quad \mbox{and} \quad
   \frac{3 \varepsilon (\numobs, \delta)}{\strongconvex} + \frac{3 \boundconsprior}{\numobs \strongconvex} + \sqrt{\frac{3c d}{\numobs \strongconvex} \log^3 \frac{T}{\delta}} \leq \frac{\localradius}{2}.\label{eq:sample-size-req-in-langevin-alg}
 \end{align}

 Under such setup, we have the following theorem:

\begin{theorem}
\label{thm:contraction-langevin-alg}
Under Assumptions~\ref{item:without_global_geometry} and
\ref{item:without_global_deviation} with $\alpha = 1$ and $\beta = 0$,
for sample size $\numobs$ and stepsize satisfying Eq~\eqref{eq:sample-size-req-in-langevin-alg},
given a local initialization satisfying, we have the following with
probability $1 - \delta$ with respect to both the data and the
  randomness in the algorithm:
  \begin{align}
   \vecnorm{\theta_T - \thetastar}{2} \leq  e^{- \frac{T \strongconvex \stepsize}{12 \log (1 / \delta)} } \vecnorm{\theta_0 - \thetastar}{2} + c \Big\{ \frac{ \varepsilon (\numobs, \delta)}{\strongconvex}  + \frac{\boundconsprior}{\strongconvex \numobs} + \log (1 / \delta) \cdot \sqrt{\frac{\usedim + \log (1 / \delta)}{\strongconvex \numobs}} \Big\}. \label{eq:bound-in-langevin-alg-thm-statement}
  \end{align}
\end{theorem}
\noindent See \cref{subsec:proof-contraction-langevin} for the proof of this theorem. A few remarks are in order.

The contraction rate for Langevin algorithm consists of four terms: the first term depends on the initial distance $\vecnorm{\theta_0 - \thetastar}{2}$, and is exponentially decaying with the number of iterations $T$.
By taking number of iterations $T \geq \frac{c}{\strongconvex \stepsize} \log^2 \big( \frac{\localradius^2 \strongconvex \numobs}{\delta} \big)$, the first term in the bound~\eqref{eq:bound-in-langevin-alg-thm-statement} becomes dominated by other terms. The rest three terms in Eq~\eqref{eq:bound-in-langevin-alg-thm-statement} matches the optimal posterior contraction rates in \cref{thm-one-point-SC}, up to extra logarithmic factors in $1/\delta$. The sample size requirement in Eq~\eqref{eq:sample-size-req-in-langevin-alg} is essentially the sample size needed for the bound to be smaller than a constant $\localradius$. Notably, unlike existing literature on contraction analysis for Langevin algorithm~\cite{mazumdar2020approximate,gadat2020cost} the stepsize requirement in Eq~\eqref{eq:sample-size-req-in-langevin-alg} does not depend on the sample size $\numobs$ or the problem dimension $\usedim$. Indeed, we only require it to be smaller than a stability threshold $\frac{\lowerweakconcavemu}{3\smoothness^2}$. This makes it possible for Langevin algorithms to use larger stepsize and achieve faster convergence, while still preserving good posterior contraction properties. Such distinction is due to the proof technique: instead of bounding the error between the distribution of Langevin algorithm iterates and the true posterior, we directly analyze the dynamics itself following the same approach as we analyze posterior contraction.


\subsection{Non-asymptotic Bernstein-von-Mises results}
\label{subsec:nonasymp_BVM}
In this section, we develop non-asymptotic Bernstein-von-Mises results
using the diffusion process~\eqref{eq-diffusion-main}. Under mild
assumptions on the population-level and empirical-level landscapes, we
establish the KL divergence between the posterior distribution and the
limiting Gaussian distribution based on the posterior convergence
rates of the parameters.

In order to obtain the non-asymptotic Bernstein-von-Mises results, we first need the following assumptions on the second order derivatives with respect to the parameters (or equivalently Hessian matrices) of the empirical and population log-likelihoods:
\begin{enumerate}[label={\bf{(BvM.\arabic*)}}]
\item\label{item:Bvm_population}
    There exists $\holderconst > 0$ such that the population log-likelihood function $\loglihood$ satisfies the one-point Lipschitz condition:
\begin{align*}
  \forall \theta \in \real^d, \quad \opnorm{\nabla^2 \loglihood
    (\theta) - \nabla^2 \loglihood (\thetastar)} \leq \holderconst
  \vecnorm{\theta - \thetastar}{2}.
    \end{align*}
\item \label{item:Bvm_deviation} For any $\delta > 0$, there exist
  non-negative functions $\noiseone^{(2)}$ and $\noisetwo^{(2)}$ with
  domain \mbox{$\mathbb{N} \times (0,1]$} such that
\begin{align*}
\sup_{\theta \in \ball (\thetastar, r)} \opnorm{\nabla^2
  \loglihood_{n} (\theta) - \nabla^2 \loglihood(\theta) } \leq
\noiseone^{(2)} (n, \delta) r + \noisetwo^{(2)} (n, \delta),
\end{align*}
for any radius $r > 0$ with probability at least $1 - \delta$.
\end{enumerate}

\medskip

The first condition~\ref{item:Bvm_population} is a standard smoothness
condition needed to prove quantitative results about asymptotic
normality (e.g., the paper~\cite{polyak1992acceleration}), and
satisfied by many models such as exponential family models, location
density models, as well as their mixtures and hierarchical
composition. The second condition~\ref{item:Bvm_deviation} is an
empirical process condition on the Hessian matrix $\nabla^2
\loglihood_\numobs$. This condition can usually be verified using
suitable concentration bounds for each $\theta$, as well as smoothness
conditions on $\nabla^2 \loglihood_\numobs$ used in controlling metric
entropies. Both assumptions are naturally needed: the limiting
Gaussian law $\mathcal{N} \big( \thetamap, (\numobs\HessianStar)^{-1}
\big)$, which depends on the population-level Hessian at the point
$\thetastar$. The shape of posterior distribution, on the other hand,
depends on the sample-level Hessian $\nabla^2 \loglihood_\numobs$ in a
local neighborhood of $\thetastar$. These two conditions are needed to
relate the shape of the sample-level posterior with the matrix
$\HessianStar$. As before, we note that these assumptions do not
require the model to be well-specified, and our non-asymptotic
Bernstein-von-Mises theorems applies to the mis-specified case, where
$\thetastar$ is the KL-projection of the model to this parametric
class.

Consider the MAP estimate $\thetamap \mydefn \arg\max_{\theta \in
  \real^d} \left( \loglihood_n (\theta) + \frac{1}{n} \log \prior
(\theta) \right)$.  Then, we have the following upper bound on the
difference between the posterior distribution of the parameters and
the Gaussian distribution with mean $\thetamap$ and covariance matrix
$(n\HessianStar)^{-1}$, where $\HessianStar \mydefn - \nabla^2 F
(\thetastar)$.
\begin{proposition}
\label{thm-non-asymp-bvm}
Under Assumptions~\ref{item:Bvm_population},~\ref{item:Bvm_deviation}
and~\ref{item:smooth_log_prior}, suppose that $\HessianStar \succ 0$,
and that $\|\thetamap - \thetastar\|_2 \leq \sigma
\sqrt{\tfrac{d}{\numobs}}$ and $\Exs_\posterior (\|\theta -
\thetastar\|_2^4 \mid X_1^n)^{1/4} \leq \sigma \sqrt{\tfrac{d}{n}}$
with prob. $1 - \delta$.  Then there exists a constant $\unicon$ such
that the KL divergence $\kull{\posterior (\cdot \mid
  X_1^\numobs)}{\mathcal{N} (\thetamap, (n\HessianStar)^{-1})}$ is at
most
\begin{align*}
\unicon \cdot \tfrac{1}{\lambda_{\min} (\HessianStar)} \left(
\tfrac{\holderconst^2 d^2 \sigma^4 }{n} +
\tfrac{\noiseone^{(2)}(\numobs, \delta)^2 d^2 \sigma^4}{n} + \sigma^2
\left( \noisetwo^{(2)}(\numobs, \delta)^2 + \tfrac{
  \smoothprior^2}{n^2} \right) d \right) \quad \mbox{with prob. at
  least $1 - 2 \delta$.}
\end{align*}
\end{proposition}
\noindent
See~\cref{subsec:proof:thm-non-asymp-bvm} for the proof of this claim.

A few remarks are in order. First, assuming that the problem-dependent
constants $(\holderconst, \sigma, \smoothprior)$ are of constant
order, and that the deviation bound scales as $\noisetwo^{(2)}
(\numobs, \delta) = O (1 / \sqrt{\numobs})$,
\cref{thm-non-asymp-bvm} shows that the KL divergence
between the posterior distribution and the Gaussian limit is of order
$O (1 / \numobs)$; second, the non-asymptotic behavior of posterior
distribution depends on the Hessian matrix $\HessianStar = - \nabla^2
\loglihood (\thetastar)$. In the well-specified case where the data
points $\DataX$ are $\mathrm{i.i.d.}$ samples from the distribution
$\Prob_{\thetastar}$, the standard Fisher-information identity
$\HessianStar = \Exs_{\thetastar} \left[ \nabla \log p_{\thetastar}
  (X) \nabla \log p_{\thetastar} (X)^\top \right]$ holds true, and the
Bayesian credible set is asymptotically the same as the confidence set
in the frequentist sense. On the other hand, in the mis-specified
models where $\thetastar = \arg\min_{\theta \in \Theta}
\kull{\Prob}{\Prob_\theta}$, the limiting Gaussian law is $\mathcal{N}
(\thetamap, (\numobs \HessianStar)^{-1})$, depending on the Hessian
matrix but not the covariance of the log-likelihood. This result
coincides with the asymptotic Bernstein-von-Mises theorem for
mis-specified parametric models~\cite{kleijn2012bernstein}, providing
a non-asymptotic characterization. Using Pinsker's inequality and Talagrand's
$T_2$-inequality~\cite{talagrand1996transportation}, the KL
divergence bound can also be transformed into bounds in term of total
variation and Wasserstein-$2$ distances, yielding a non-asymptotic $O (1 / \sqrt{\numobs})$ rate of convergence.

We can also use the diffusion process approach to derive more
fine-grained concentration bounds for the posterior distribution, with
behavior mathching the limiting Gaussian law.  Doing so requires the
following stronger version of the posterior contraction condition:
\begin{align}
\label{eq:contraction-rate-condition-for-bvm}  
\left(\Exs_\posterior \left[ \vecnorm{\theta - \thetastar}{2}^{2p}
  \mid \DataX \right] \right)^{1/p} \leq \frac{\sigma^2 p d }{n},
\quad \mbox{for all $p > 0$ with probability at least $1 - \delta$.}
\end{align}

In addition, we define the function
\begin{align*}
  \mathcal{H}_\numobs( t, \delta) \mydefn (\holderconst +
  \noiseone^{(2)}(\numobs, \delta))^2 \cdot \frac{\sigma^4 d^2 t^2
  }{n^2} + \frac{\sigma d}{n} \left( \noisetwo^{(2)} (\numobs,
  \delta)^2 + \frac{\smoothprior^2}{\numobs^2} + (\holderconst +
  \noiseone^{(2)}(\numobs, \delta))^2 \frac{\sigma d}{\numobs}
  \right),
\end{align*}
which plays the role of a higher-order term.  Equipped with this
notation, we have:
\begin{theorem}
\label{thm:non-asymp-credible-set}
Suppose that conditions~\ref{item:Bvm_population}
and~\ref{item:Bvm_deviation} are in force, the Hessian $\HessianStar$
is strictly positive definite, and the high-probability posterior
contraction condition~\eqref{eq:contraction-rate-condition-for-bvm}
holds. Then for any $\delta \in (0,1)$, uniformly over all $\offpar
\in (0, 1)$ and $t > 0$, we have
\begin{align}
  \posterior \left( \vecnorm{\theta - \thetamap}{\HessianStar}^2 \geq
  (1 + \offpar)\frac{\usedim}{\numobs} + c \frac{1 + \log \kappa
    (\HessianStar)}{\offpar} \left( \frac{t}{n} + \mathcal{H}_n (t,
  \delta) \right) \; \bigg| \; X_1^\numobs \right) \leq e^{- t},\label{eq:non-asymtotic-bvm-credible-set-statement}
\end{align}
 with probability at least $1 - \delta$.
\end{theorem}
\noindent See \cref{subsec:proof:thm:non-asymp-credible-set} for the
proof of the theorem.

\vspace{0.5em}

A few remarks are in order. Note that the limiting Gaussian density
$\gamma_n = \mathcal{N} \big(0, (n \HessianStar)^{-1} \big)$ satisfies
a tail bound of the form $\gamma_\numobs \Big( \|\theta -
\thetamap\|_{\HessianStar}^{2} \geq \tfrac{d}{\numobs} +
\tfrac{t}{\numobs} \Big) \leq e^{-t/2}$ for any $t > 0$.  Unless the
posterior is actually Gaussian in finite samples, it cannot satisfy
this bound exactly.  However, \cref{thm:non-asymp-credible-set}
provides a bound with near-matching behavior: note that the
leading-order term scales $\frac{d}{n}$, matching the asymptotics with
a pre-factor $1 + \omega$ that can be made arbitrarily close to $1$
(at the expense of the other term). The $\frac{t}{n}$ dependency on
the tail probability comes with a mild $\log \kappa (\HessianStar)$
factor due to technical reasons. The bound also contains a high-order
term $\mathcal{H}_n (t, \delta)$, which scales as $O(n^{-2})$. It is
also worth noticing that the terms in
\cref{thm:non-asymp-credible-set} depend on the tail probability
$\hackpar = e^{-t}$ only logarithmically, allowing for very small
value of $\hackpar$. We can therefore use
equation~\eqref{eq:non-asymtotic-bvm-credible-set-statement} to
construct non-asymptotic credible sets of ellipsoid shape, adapted to
the geometry of local Hessian matrix $\HessianStar$.

\vspace{0.5em}
\noindent\textit{Proof outline:} The proofs of both
\cref{thm-non-asymp-bvm} and \cref{thm:non-asymp-credible-set} rely on
a first-order approximation of the gradient $\nabla
\loglihood_\numobs$. In particular, the diffusion
process~\eqref{eq-diffusion-main} can be written in the form $d
\theta_t = - \frac{1}{2} \HessianStar (\theta_t - \thetamap) dt +
\tfrac{1}{2} e_\numobs (\theta_t) dt + \tfrac{1}{2 \numobs} \log
\prior (\theta_t) dt + \tfrac{1}{\sqrt{\numobs}} d B_t$, where we have
defined the linearization error $e_\numobs(\theta) \mydefn \nabla
\loglihood_\numobs (\theta) + \HessianStar (\theta -
\thetastar)$. Under the smoothness
assumption~\ref{item:Bvm_population} and the empirical process
bound~\ref{item:Bvm_deviation}, one can show that $\vecnorm{e_\numobs
  (\theta)}{2} \leq \vecnorm{\theta - \thetastar}{2} \cdot
O(\sqrt{d/n})$ with high probability.  When this error term is
ignored, the diffusion process is an Ornstein-Uhlenbeck process whose
stationary distribution is $\mathcal{N} \big( \thetamap, (\numobs
\HessianStar)^{-1} \big)$. Therefore, given the non-asymptotic bounds
on the error $e_\numobs (\theta)$ stated above, we can provide a
non-asymptotic characterization of the distance between the stationary
distribution and the limiting Gaussian law.  In order to prove
\cref{thm-non-asymp-bvm}, we use the Gaussian log-Sobolev
inequality~\cite{gross1975logarithmic} to control the KL divergence,
whereas proving \cref{thm:non-asymp-credible-set} is based on using
It\^{o} calculus to study the growth of a Lyapunov function defined
using the metric induced by $\HessianStar$.  Full proofs for the two
results are given in~\cref{subsec:proof:thm-non-asymp-bvm}
and~\cref{subsec:proof:thm:non-asymp-credible-set}, respectively.


\section{Some illustrative examples}
\label{sec:examples}

Having developed some general theory, we now use it to derive some
concrete results for two examples of interest in statistical analysis:
Bayesian logistic regression and Gaussian mixture models.  Due to
space constraints, we defer the treatment of additional examples
to~\cref{sec:app-additional-examples}.

\subsection{Bayesian logistic regression}
\label{sec:logistic_regress}

Logistic regression is a classical way of modeling the relationship
between a binary response variable $Y \in \{-1, +1 \}$ and a vector $X
\in \real^\usedim$ of explanatory variables (e.g., see the
book~\cite{McCullagh_generalized}).  In the logistic regression model,
the pair $(X, Y)$ are related by the conditional distribution
\begin{align}
\label{eq:Bayes_logistic_regress}
\Prob \parenth{ Y = 1 \mid X, \theta } = \tfrac{e^{
    \inprod{X}{\theta}}}{1 + e^{ \inprod{X}{\theta}}}, \qquad
\mbox{where $\theta \in \real^\usedim$ is a parameter vector.}
\end{align}

Suppose that we observe a collection $\DataZ = \{Z_i \}_{i=1}^\numobs$
of $\numobs$ i.i.d paired samples \mbox{$Z_i = (X_i, Y_i)$,} each
generated in the following way.  First, the covariate vector $X_i$ is
drawn from a standard Gaussian distribution $N(0, I_\usedim)$, and
then the binary response $Y_i$ is drawn according to the conditional
distribution $\Prob \parenth{ \cdot \mid X_i, \thetastar }$ from
equation~\eqref{eq:Bayes_logistic_regress}, where $\thetastar \in
\real^\usedim$ is a fixed but unknown value of the parameter vector.
Given these assumptions, the sample log-likelihood function of the
samples $\DataZ$ takes the form $\loglihoodlogit_\numobs (\theta)
\mydefn \frac{1}{\numobs} \sum_{i=1}^\numobs \left \{ \log \Prob
\parenth{ Y_{i} \mid X_{i}, \theta} + \log \phi(X_i) \right \}$, where
$\phi$ denotes the density of a standard normal vector.  Combining
this log likelihood with a given prior $\prior$ over $\theta$ yields
the posterior distribution in the usual way. We assume that the prior
function $\prior$ satisfies
Assumptions~\ref{item:smooth_population_condition}
and~\ref{item:smooth_log_prior}, and recall the constant $B$ defined
in the latter assumption.

With this set-up, the following result establishes the posterior
convergence rate of $\theta$ around $\thetastar$, conditionally on the
observations $\DataZ$.
\begin{corollary}
\label{cor:logit_regres}
For any $\delta \in (0,1)$, given $\frac{\numobs}{\log \numobs} \geq
\uniconprime d \log(\frac{1}{\delta})$ i.i.d. samples from the
Bayesian logistic regression model~\eqref{eq:Bayes_logistic_regress},
we have $\posterior \Big( \|\theta - \thetastar\|_2 \geq \unicon \big
\{ \sqrt{\tfrac{\usedim}{\numobs}} + \sqrt{\tfrac{\log(1/
    \delta)}{\numobs}} + \tfrac{\boundconsprior}{\numobs} \big \} \;
\mid \; \DataZ \Big) \leq \delta$ with probability $1 - \delta$ over
the data $\DataZ$.
\end{corollary}
\noindent See~\cref{subsec:cor:logit_regres} for the proof of this
claim.

A few comments are in order. First, the result of
Corollary~\ref{cor:logit_regres} shows that for Bayesian logistic
regression model~\eqref{eq:Bayes_logistic_regress}, the posterior
convergence rate for the parameter is of the order
$(d/\numobs)^{1/2}$.  Furthermore, this result also gives a concrete
dependence of the rate on $B$ characterizing the degree to which the
prior is concentrated away from the true parameter.  Second, by taking
the sample size in the function $\loglihoodlogit_\numobs$ to infinity,
we find that the population log-likelihood is given by
$\loglihoodlogit(\theta) \mydefn \Exs_{(X, Y)} \brackets{- \log
  \parenth{1 + e^{- Y \inprod{X} {\theta}}} + \log \phi(X)}$. Here
$\phi$ denotes the standard normal density in $\real^d$, and the outer
expectation in the above display is taken with respect to $X$ and $Y
\mid X$ from the logistic model~\eqref{eq:Bayes_logistic_regress}.

Let us sketch how \cref{theorem-main-weakly-convex} can be applied so
as to prove this corollary. The first step in our proof, as given in
\cref{subsec:cor:logit_regres}, is to show that there are universal
constants $\unicon, \unicon_{1}, \unicon_{2}$ such that
\begin{subequations}
\begin{align}
\label{eq:weak_conv_logit}  
  -\inprod{ \nabla \loglihoodlogit(\theta)}{\theta - \thetastar} &
  \geq \unicon_1 \begin{cases} \enorm{ \theta - \thetastar}^{2}, \quad
    \text{for all} \ \enorm{\theta - \thetastar} \leq 1 \\ \enorm{
      \theta - \thetastar}, \quad \text{otherwise} \end{cases}, \quad \mbox{and} \\
\label{eq:empi_process_logit}
  \sup_{\theta \in \real^d } \vecnorm{\nabla \loglihoodlogit_{n}
    (\theta) - \nabla \loglihoodlogit ( \theta) }{2} & \leq \unicon_2
  \parenth{\sqrt{\tfrac{\usedim}{\numobs}} +
    \sqrt{\tfrac{\log(1/\delta)}{\numobs}} +
    \tfrac{\log(1/\delta)}{\numobs}},
\end{align}
\end{subequations}
for any $r > 0$ with probability $1 - \delta$ as long as
$\frac{n}{\log n} \geq \unicon d \log(1/ \delta)$.  Using these
results, we show that Assumptions~\ref{item:weak_concavity_population}
and~\ref{item:weak_concavity_deviation} hold with
\begin{align}
\label{eq:value_func_logit}
\weakcon(r) & = \unicon_1 \begin{cases} r^2 \quad & \mbox{
    for all $r \in (0,1)$, and} \\
  r  & \mbox{otherwise} \end{cases}, \quad
\text{and} \quad \perturb(r) = \unicon_2 \quad \mbox{for all $r > 0$.}
\end{align}
We can check that the functions $\weakcon$ and $\perturb$ satisfy the
conditions in Assumptions~\ref{item:weak_concavity_growth_conditions}
and~\ref{item:weak_concavity_tail}.  Therefore,
applying~\cref{theorem-main-weakly-convex} to these functions yields
the posterior contraction rate claimed in~\cref{cor:logit_regres}.
See~\cref{subsec:cor:logit_regres} for the details.


\subsection{Over-specified Bayesian Gaussian mixture models}
\label{sec:over_Gaussian_mixture}

Gaussian mixtures are widely used for modeling heterogenous datasets;
clusters in the data are naturally associated with different mixture
components~\cite{Lindsay-1995}.  In fitting such models, the true
number of components is generally unknown, and several approaches have
been proposed to deal with this challenge.  One of the most popular
methods is to deliberately include a large number of conmponents,
leading to what are known as overspecified Gaussian mixture
models~\cite{Rousseau-2011}.  While the behavior of posterior
densities in such mixture models is relatively
well-understood~\cite{Ghosal-2001}, the behavior of the posterior in
terms of its parametric components is not as well understand.  When
the covariance matrices are known and the parameter space is bounded,
the location parameters have been shown to have posterior convergence
rates of the order $\numobs^{- 1/ 4}$ in the Wasserstein-$2$
metric~\cite{Nguyen-13}. However, neither the dependence on dimension
$d$ nor on the true number of components have been established.

In this section, we consider the behavior of overspecified Gaussian
mixture models in a particular setting, and provide convergence rates
for the parameters with precise dependence on the dimension $d$, and
without requiring any boundedness assumption.  In order to model the
simplest form of over-specification, suppose that we fit a Bayesian
location mixture model to a collection of i.i.d.  samples
$X_{1}^\numobs = (X_{1}, \ldots, X_\numobs)$ drawn from a Gaussian
Gaussian distribution $\NORMAL(\thetastar, I_{d})$.  (For
concreteness, we set $\thetastar = 0$.)  We study the behavior of the
Bayesian Gaussian mixture model
\begin{align} 
\label{eq:Gaussian_mixture} 
    \theta \sim \prior(\cdot ), \qquad \Gind_{i} \in \{- 1, 1\}
    \overset{\text{i.i.d.}}{\sim} \text{Cat}(1/ 2, 1/ 2), \qquad X_{i}
    \mid \Gind_{i}, \theta \overset{\text{i.i.d.}}{\sim} \NORMAL(
    \Gind_i \theta, I_{d}),
\end{align}
where $\text{Cat}(1/2, 1/2)$ stands for the categorical distribution
with parameters $(1/ 2, 1/ 2)$. We assume that the prior $\prior$
satisfies the smoothness
Assumptions~\ref{item:smooth_population_condition}
and~\ref{item:smooth_log_prior}; one example is a Gaussian
distribution (over the location parameter $\theta$.  Our goal in this
section is to characterize the posterior contraction rate of the
location parameter $\theta$ around $\thetastar$.

In order to do so, we first define the sample log-likelihood function
$\loglihoodgaus_{n}$ given data $X_{1}^{n}$.  It has the form
$\loglihoodgaus_{n}(\theta) \mydefn \frac{1}{n} \sum_{i = 1}^{n} \log
\parenth{\frac{1}{2} \normden(X_{i}; - \theta, I_{d}) + \frac{1}{2}
  \normden(X_{i}; \theta, I_{d})}$, where $x \mapsto \normden(x;
\theta, I_{d}) = (2 \pi)^{- d/ 2}e^{- \enorm{ x - \theta}^2/2}$
denotes the density of multivariate Gaussian distribution
$\NORMAL(\theta, \sd^2 I_{d})$. Similarly, the population
log-likelihood function is given by $\loglihoodgaus( \theta) \mydefn
\Exs_{X} \brackets{ \log \parenth{\frac{1}{2} \normden(X; - \theta,
    I_{d}) + \frac{1} {2} \normden(X; \theta, I_{d})}}$, where the
outer expectation in the above display is taken with respect to $X
\sim \NORMAL(\thetastar, I_{d})$.

In \cref{subsec:proof:cor:Gaussian_mixture}, we prove that there is a
universal constant $\unicon_1 > 0$ such that
\begin{subequations}
\begin{align}
    -\inprod{ \nabla \loglihoodgaus(\theta)}{\theta - \thetastar} &
    \geq \begin{cases} \unicon_{1} \enorm{ \theta - \thetastar}^{4},
      \quad & \text{for all} \ \enorm{\theta - \thetastar} \leq
      \sqrt{2} \\ 4 \unicon_{1} \parenth{ \enorm{ \theta -
          \thetastar}^2 - 1}, \quad &
      \text{otherwise} \end{cases}, \label{eq:weak_conv_gaus}
\end{align}
and moreover, there are universal constants $(\unicon, \unicon_{2})$
such that for any $\delta \in (0,1)$, given a sample size $n \geq
\unicon d \log(1/ \delta)$, we have
\begin{align}
\label{eq:empi_process_gaus}    
\sup_{\theta \in \ball (\thetastar, r)} \|\nabla \loglihoodgaus_{n}
(\theta) - \nabla \loglihoodgaus(\theta)\|_2 & \leq \unicon_{2}
\parenth{ r + \tfrac{1}{ \sqrt{n}} } \big( \sqrt{\tfrac{d}{\numobs}} +
\sqrt{\tfrac{ \log(\log(\numobs/\delta))}{\numobs}} \big) \quad
\mbox{with prob. $1 - \delta$.}
\end{align}
\end{subequations}

Given the above results, the functions $\weakcon$ and $\perturb$ in
Assumptions~\ref{item:weak_concavity_population}
and~\ref{item:weak_concavity_deviation} take the form
\begin{align}
\label{eq:value_func_gauss}
\weakcon(r) = \begin{cases}
  c_{1} r^4, \quad & \text{for all} \ 0< r \leq \sqrt{2} \\
    4 c_{1} \parenth{ r^2 - 1}, \quad & \text{otherwise}
\end{cases},
\quad \text{and} \quad \perturb ( r) = r + \frac{1}{\sqrt{n}} \quad
\mbox{for all $r > 0$.}
\end{align}
These functions satisfy the conditions of Assumptions~\ref{item:weak_concavity_growth_conditions} and~\ref{item:weak_concavity_tail}.  Therefore,
it leads to the following result regarding the posterior contraction
rate of parameters under overspecified Bayesian location Gaussian
mixtures~\eqref{eq:Gaussian_mixture}:
\begin{corollary}
\label{cor:Gaussian_mixture}
Given the overspecified Bayesian location Gaussian mixture
model~\eqref{eq:Gaussian_mixture}, there are universal constants
$\unicon, \unicon'$ such that given any $\delta \in (0,1)$ and a
sample size $n \geq \unicon' d \log(1 / \delta)$, we have $\posterior
\Big( \enorm{ \theta - \thetastar} \geq \unicon
\parenth{\tfrac{d}{\numobs} + \tfrac{\log(\log(\numobs/
    \delta))}{\numobs} }^{1/ 4} +
\parenth{\tfrac{\boundconsprior}{\numobs}}^{1/3} \; \big| \;
X_{1}^{\numobs} \Big) \leq \delta$ with probability $1 - \delta$ over
the data $X_{1}^{n}$. Here, $B$ is the non-negative constant in
Assumption~\ref{item:smooth_log_prior}.
\end{corollary}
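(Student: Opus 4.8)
The plan is to derive the claimed rate as an application of Theorem~\ref{theorem-main-weakly-convex}, following the same template used for the logistic and single-index examples: (i) establish the population weak-concavity bound~\eqref{eq:weak_conv_gaus} and the uniform gradient-perturbation bound~\eqref{eq:empi_process_gaus}; (ii) read off the functions $\weakcon$ and $\perturb$ of~\eqref{eq:value_func_gauss} and verify Assumptions W.1--W.4 for them; (iii) invoke Theorem~\ref{theorem-main-weakly-convex} to obtain the unique positive root $z^*(\numobs,\delta)$ of the fixed-point equation~\eqref{eq:key_equation} together with the tail bound~\eqref{EqnWeaklyConvexPosterior}; and (iv) bound $z^*(\numobs,\delta)\le\unicon\,\bigl((\usedim+\log(\log\numobs/\delta)+\boundcons)/\numobs\bigr)^{1/4}$, which is the asserted rate. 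Steps (ii)--(iv) use only~\eqref{eq:weak_conv_gaus}--\eqref{eq:empi_process_gaus}, so the substantive part is step (i), carried out in Appendix~\ref{subsec:proof:cor:Gaussian_mixture}.

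\emph{Step (i).} From $\tfrac12\normden(x;-\theta,I_\usedim)+\tfrac12\normden(x;\theta,I_\usedim)\propto e^{-\enorm{x}^2/2}e^{-\enorm{\theta}^2/2}\cosh(\inprod{x}{\theta})$ one gets $\nabla\loglihoodgaus_\numobs(\theta)=\frac1\numobs\sum_{i=1}^\numobs\bigl(\tanh(\inprod{X_i}{\theta})X_i-\theta\bigr)$ and $\nabla\loglihoodgaus(\theta)=\Exs_{X\sim\NORMAL(0,I_\usedim)}\brackets{\tanh(\inprod{X}{\theta})X}-\theta$. Since $\thetastar=0$ and the standard Gaussian is rotationally invariant, $\inprod{\nabla\loglihoodgaus(\theta)}{\thetastar-\theta}$ depends on $\theta$ only through $s:=\enorm{\theta}$; writing $\inprod{X}{\theta}\overset{d}{=}s\gauss$ with $\gauss\sim\NORMAL(0,1)$ and using Stein's identity together with $1-\sech^2(y)=\tanh^2(y)$ gives $\inprod{\nabla\loglihoodgaus(\theta)}{\thetastar-\theta}=s^2-s\,\Exs_\gauss[\gauss\tanh(s\gauss)]=s^2\,\Exs_\gauss[\tanh^2(s\gauss)]$. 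Because $y\mapsto\tanh^2(y)/y^2$ is decreasing, $\Exs_\gauss[\tanh^2(s\gauss)]/s^2$ is decreasing in $s$ with limit $1$ as $s\to0$; this yields the quartic lower bound $\uniconone s^4$ on $[0,\sqrt2]$ (with $\uniconone=\tfrac12\Exs_\gauss[\tanh^2(\sqrt2\,\gauss)]$, the extremal value on that interval) and, since $\Exs_\gauss[\tanh^2(s\gauss)]\uparrow1$, the bound $4\uniconone(s^2-1)$ for $s>\sqrt2$ --- this degenerate quartic behaviour at $\thetastar$ is exactly the rank deficiency of the Fisher information of the over-specified model. For~\eqref{eq:empi_process_gaus} I would bound each coordinate of $\nabla\loglihoodgaus_\numobs(\theta)-\nabla\loglihoodgaus(\theta)$ in Orlicz norm (it is sub-exponential, as $\tanh$ is bounded and $X$ is Gaussian), control the Lipschitz modulus in $\theta$ of this random vector on $\ball(\thetastar,r)$, and combine a covering-number bound for the $\usedim$-dimensional ball with a union bound over dyadic radii; this produces the factor $\sqrt{(\usedim+\log(\log\numobs/\delta))/\numobs}$, the affine-in-$r$ growth, and the additive $\numobs^{-1/2}$ term.

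\emph{Steps (ii)--(iii).} With $\weakcon,\perturb$ as in~\eqref{eq:value_func_gauss} one has $\perturb'(r)=1$, $\perturb''(r)=0$, $\weakcon(r)=\uniconone r^4$ on $[0,\sqrt2]$ and $\weakcon(r)=4\uniconone(r^2-1)$ on $(\sqrt2,\infty)$; substituting these into the two differential inequalities of Assumption W.3 reduces each, on each of the two pieces, to a polynomial inequality in $(r,\numobs^{-1/2})$ whose left-minus-right side is a nonnegative combination of monomials, and continuity of $\weakcon,\weakcon'$ at $r=\sqrt2$ together with the implicit description of $\inver$ (the inverse of $r\mapsto r\perturb(r)=r^2+r\numobs^{-1/2}$) yields convexity of $r\mapsto\weakcon(\inver(r))$. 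Assumption W.4 holds since $\lim_{z\to\infty}\weakcon(z)/\bigl(z\perturb(z)\bigr)=4\uniconone$ strictly exceeds $\noise(\numobs,\delta)=\unicontwo\sqrt{(\usedim+\log(\log\numobs/\delta))/\numobs}$ once $\numobs\ge\uniconprime\usedim\log(1/\delta)$ with $\uniconprime$ large enough. Theorem~\ref{theorem-main-weakly-convex} then applies, giving the unique positive root $z^*(\numobs,\delta)$ of~\eqref{eq:key_equation} and the bound~\eqref{EqnWeaklyConvexPosterior}.

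\emph{Step (iv), and the main obstacle.} Because $\perturb(r)=r+\numobs^{-1/2}$ is not a pure monomial, Corollary~\ref{corollary:spec_case_weak_concave} --- which for exponents $\alpha=4$, $\beta=1$ would already give rate exponent $\tfrac1{2(\alpha-\beta-1)}=\tfrac1\alpha=\tfrac14$ --- does not apply verbatim, so I would estimate the root directly. Let $\tau$ denote the order of the additive constant in~\eqref{eq:key_equation}, namely $\tau\asymp(\usedim+\log(\log\numobs/\delta)+\boundcons)/\numobs$ for this model. Substituting $z=\unicon\tau^{1/4}$ into~\eqref{eq:key_equation}, the left side equals $\uniconone\unicon^4\tau$ while the right side is at most $\noise(\numobs,\delta)\bigl(\unicon\tau^{1/4}+\numobs^{-1/2}\bigr)\unicon\tau^{1/4}+O(\tau)\asymp\unicon^2\tau+\unicon\,\numobs^{-1/2}\tau^{3/4}+O(\tau)$; since $\tau\ge\usedim/\numobs\ge\numobs^{-1}$ we have $\numobs^{-1/2}\tau^{-1/4}\le1$, so the cross term is $O(\unicon\tau)$ and the left side dominates once $\unicon$ is a large enough universal constant, while the sample-size hypothesis (with a suitably large $\uniconprime$) forces $\unicon\tau^{1/4}\le\sqrt2$ so that the quartic branch of $\weakcon$ is the operative one. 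Since Theorem~\ref{theorem-main-weakly-convex} gives \emph{uniqueness} of the positive root and the map $z\mapsto\weakcon(z)-\noise(\numobs,\delta)\perturb(z)z$ equals $0$ at $z=0$ and tends to $+\infty$, the fact that it already exceeds the additive constant at $z=\unicon\tau^{1/4}$ forces $z^*(\numobs,\delta)\le\unicon\tau^{1/4}$, the claimed rate. I expect the genuinely hard steps to be: (a) the dimension-explicit uniform bound~\eqref{eq:empi_process_gaus}, which is where the new $\usedim^{1/4}$ scaling originates and which requires controlling Orlicz norms of the mixture score \emph{uniformly over an unbounded parameter} together with a peeling over radius scales (the source of the $\log\log\numobs$ term); and, to a lesser extent, (b) verifying the convexity requirement in Assumption W.3 across the breakpoint $r=\sqrt2$, which is fiddly since $\inver$ is only defined implicitly.
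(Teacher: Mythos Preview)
Your proposal is correct and follows the same overall template as the paper: verify~\eqref{eq:weak_conv_gaus} and~\eqref{eq:empi_process_gaus}, read off $\weakcon,\perturb$ from~\eqref{eq:value_func_gauss}, check Assumptions W.3--W.4, and invoke Theorem~\ref{theorem-main-weakly-convex}. The paper's own proof is in fact terser than yours---it simply states that ``the proof of Corollary~\ref{cor:Gaussian_mixture} follows directly by verifying the claims~\eqref{eq:weak_conv_gaus} and~\eqref{eq:empi_process_gaus}'' and does not spell out your Steps~(ii)--(iv) at all.

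There are two minor methodological differences worth noting. First, for the weak-concavity bound~\eqref{eq:weak_conv_gaus}, the paper writes $\inprod{\nabla\loglihoodgaus(\theta)}{\thetastar-\theta}\ge\enorm{\theta}^2-\enorm{\theta}\,\enorm{\Exs[X\tanh(X^\top\theta)]}$ and then imports a bound on $\enorm{\Exs[X\tanh(X^\top\theta)]}$ from Theorem~2 of Dwivedi et al., yielding $\inprod{\nabla\loglihoodgaus(\theta)}{\thetastar-\theta}\ge p\enorm{\theta}^4/(2+\enorm{\theta}^2)$ for an explicit $p\in(0,1)$. Your Stein-identity route, which gives the exact equality $\inprod{\nabla\loglihoodgaus(\theta)}{\thetastar-\theta}=s^2\,\Exs_\gauss[\tanh^2(s\gauss)]$ and then argues via monotonicity of $y\mapsto\tanh^2(y)/y^2$, is a clean self-contained alternative that avoids the external citation; both land on the same piecewise form with possibly different universal constants. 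Second, for~\eqref{eq:empi_process_gaus} the paper again cites an external fixed-radius bound (Lemma~1 of the same reference) and then carries out the dyadic peeling you describe, handling the tails $r<2^{-M_1}$ and $r>2^{M_2}$ by the crude estimates $|\tanh(x)|\le|x|$ and $|\tanh(x)|\le 1$ respectively; your Orlicz-norm/chaining sketch would have to reproduce that fixed-radius bound, but the peeling step and the resulting $\log\log n$ factor match. Your explicit treatment of the fixed-point in Step~(iv) and your checks of the W.3 differential inequalities are things the paper simply asserts; they are correct as you have them.
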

\noindent
See \cref{subsec:proof:cor:Gaussian_mixture} for the proof of Corollary~\ref{cor:Gaussian_mixture}. 

The dependence on $n$ in the posterior contraction rate of $\theta$ in
Corollary~\ref{cor:Gaussian_mixture} is consistent with the previous
result with location parameters in the overspecified Bayesian location
Gaussian mixtures~\cite{Chen1992, Ishwaran-2001, Nguyen-13}.  Novel
aspects of the bound include $d^{1/4}$-dependence on dimension $d$ and
the $B^{1/3}$-dependence on the smoothness parameter $B$.  Finally,
our result does not require the boundedness of the parameter space, in
contrast to past work~\cite{Chen1992, Ishwaran-2001, Nguyen-13}.




\section{Discussion}
\label{sec:discussion}

In this paper, we described an approach for analyzing the posterior
contraction rates of parameters based on the diffusion processes.  Our
theory depends on two important features: the convex-analytic
structure of the population log-likelihood function $\loglihood$ and
stochastic perturbation bounds between the gradient of $\loglihood$
and the gradient of its sample counterpart $\loglihood_{n}$.  We
studied the problem under both global and local assumptions on the
log-likelihood. For log-likelihoods that are globally strongly concave
around the true parameter $\thetastar$, we established posterior
convergence rates for parameter estimation of the order $(d/ n)^{1/
  2}$, valid under appropriate smoothness conditions on the prior
distribution $\prior$ and mild conditions on the perturbation error
between $\nabla \loglihood_{n}$ and $\nabla \loglihood$. On the other
hand, when the population log-likelihood function is globally weakly
concave, our analysis shows that convergence rates are more delicate:
they depend on an interaction between the degree of weak convexity,
and the stochastic error bounds.  In this setting, we proved that the
posterior convergence rate of parameter is upper bounded by the unique
positive solution of a non-linear equation determined by the previous
interplay.  We also provided results under weaker local conditions on
the growth of log-likelihood, and the empirical process defined by the
likelhood gradients over some neighborhood $\ball(\theta^{*}, r_{0})$
of the global maximum.  Finally, we demonstrated the utility of the
diffusion process approach by deriving non-asymptotic forms of
Bernstein-von Mises results for models with non-degenerate Fisher
information.

Let us now discuss a few directions that arise naturally from our
work.  First, in the weakly convex settting, though we have established non-asymptotic posterior contraction bounds, the current results do not provide information on the shape of the asymptotic posterior distribution. For example, when $\loglihood$ is locally strongly concave around
$\thetastar$, it is well-known from the Berstein-von Mises
theorem that the posterior distribution of parameter converges to a
multivariate normal distribution centered at the maximum likelihood
estimation (MLE) with the covariance matrix is given by $1/ \parenth{n
  I(\thetastar)}$ (e.g., see the book~\cite{vanderVaart-98}), where
$I(\thetastar)$ denotes the Fisher information matrix at $\thetastar$.
When the $\loglihood$ is only weakly concave, then the Fisher
information matrix $I( \thetastar)$ is degenerate, so that the
posterior distribution can no longer be approximated by a multivariate
Gaussian distribution.  It is interesting to consider how the
diffusion approach might provide insight into the
posterior behavior in this setting.

Second, the contraction rates given in this paper can give information
about the over-specification of the latent variable models, thereby
having potential applications for model selection.  As a concrete
example, for the symmetric two-component Gaussian mixture model
example discussed in \Cref{sec:over_Gaussian_mixture}, the posterior
distribution concentrates around $\thetastar = 0$ at a rate $O \big(
(d / n)^{1/4} \big)$ in the over-specified case.  On the other hand,
for a non-degenerate mixture with symmetric modes at $\thetastar$ and
$-\thetastar$ (with $\thetastar \neq 0$), it concentrates at the usual
rate $O \big( (d / n)^{1/2} \big)$.  Consequently, the degree of
dispersion in the posterior serves as an indicator of
over-specification. Furthermore, since our results are non-asymptotic,
they also give guidance on how this procedure could be performed with
finite sample size $n$.  Finally, whereas this paper focused on
posterior contraction for parametric models, we suspect that the
diffusion process approach used here might also be fruitfully applied
to non-parametric models.


\section*{Appendices}

\appendix

In our appendices, we provide the details of our general
theory applied to various examples, along with all details for the
proofs of our general results.  ~\Cref{sec:app-additional-examples}
covers the additional examples mentioned in the main text that serve
to illustrate the diffusion process approach to posterior
contraction. Additional general theory for the posterior convergence
rate of parameters when the population log-likelihood function is
non-convex is in~\cref{subsec:multiple_global_maxima}. The proofs of
theorems and propositions are given in~\cref{sec:proof}.  The proofs
of our main corollaries are in~\cref{sec:append_corollary_prof}, while
proofs of the remaining results in the paper are
in~\cref{subsec:auxiliary_results}.


\section{Additional examples}\label{sec:app-additional-examples}

This appendix continues the discussion of~\Cref{sec:examples},
providing consequence of our theorems for some additional
examples. Our discussion includes: Bayesian non-linear regression
models with polynomial link functions, general Bayesian Gaussian
mixture models, and one-dimensional location models with a singular
density function. These examples feature different aspects of the
diffusion process approach, covering local and global conditions, as
well as strongly and weakly concave log-likelihood functions.

\subsection{Bayesian non-linear regression models}
\label{sec:single_index}

We now turn to analyzing a certain type of non-linear regression
model, known as a single index model, but in a simplified form in
which the link function is known.  These models are a natural
generalization of linear regression, and have applications in
econometrics, biostatistics, and computational
imaging~\cite{Carroll-Hall-88}.

Given a collection of $d$-dimensional covariate vectors $\{X_i
\}_{i=1}^\numobs$, suppose that we observe responses of the form
\begin{align}
\label{eq:single_index}
    Y_{i} = g (X_{i}^{\top} \thetastar) + \epsilon_{i}, \qquad
    \mbox{for $i = 1, \ldots, \numobs$,}
\end{align}
for a known link function of the form $g$.  In the analysis given
here, we study the family $g(t) = t^{p}$ for some $p \geq 2$ given.
The special case $p = 2$ leads to an idealized instance of the problem
of noisy phase retrieval.

We assume moreover that the additive noise variables
$\{\epsilon_i\}_{i=1}^\numobs$ are i.i.d. and standard Gaussian,
whereas the covariate vectors $\{X_{i}\})_{i=1}^\numobs$ are also
i.i.d., independent of the noise, and standard multivariate Gaussian.
Conditioning on $X_{i}$ and $\theta$, we have
\begin{align}
\label{eq:Bayes_single_index}
Y_{i} \mid X_{i}, \theta \overset{\text{i.i.d.}}{\sim} \NORMAL
\parenth{ g \parenth{ X_{i}^{ \top} \theta}, 1}.
\end{align}

Moreover, we endow the parameter space $\real^d$ with a prior function
$\prior$ that satisfies the
Assumptions~\ref{item:smooth_population_condition}
and~\ref{item:smooth_log_prior}.  As in the previous example, we first
study the structure of the sample log-likelihood function around the
true parameter $\thetastar$, and then we establish a uniform
perturbation bound between the population and sample log-likelihood
functions.

Given the Bayesian single index model~\eqref{eq:Bayes_single_index},
the sample log-likelihood function $\loglihoodind_{n}$ of the samples
$Z_{1}^{n} = \{Z_{i}\}_{i = 1}^{n}$ admits the following form
\begin{align}
\label{eq:empi_likeli_index}
    \loglihoodind_{n}( \theta) \mydefn \frac{1}{n} \parenth{ \sum_{i =
        1}^{n} - \frac{\parenth{ Y_{i} - g \parenth{ X_{i}^{\top}
            \theta}}^2}{2} + \log \phi(X_{i})},
\end{align}
where $\phi$ is the standard normal density function of $X_{1},
\ldots, X_{n}$.  Hence, the population log-likelihood function
$\loglihoodind$ has the following form
\begin{align}
\label{eq:pop_likeli_index}    
    \loglihoodind( \theta) \mydefn \Exs_{(X,Y)} \brackets{ -
      \frac{\parenth{ Y - g\parenth{ X^{\top} \theta}}^2}{2} + \log
      \phi(X)},
\end{align}
where the outer expectation in the above display is taken with respect
to $X \sim \NORMAL(0, I_{d})$ and $Y| X = x \sim \NORMAL \parenth{g
  \parenth{x^{\top} \thetastar}, 1}$.

The interesting case to consider is $\thetastar = 0$, in which case,
for any link function of the function $g(t) = t^p$ with $p \geq 2$,
the function $\loglihoodind$ is weakly concave around $\thetastar$.
Given our choices of $g$ and $\thetastar$, the population log-
likelihood function takes on the closed-form expression
\begin{align*}
    \loglihoodind (\theta) = \frac{1 + (2 p - 1)!! \enorm{\theta -
        \thetastar}^{ 2 p}}{2} \qquad \mbox{for all $\theta
      \in \Rspace^{d}$.}
\end{align*}
Furthermore, in~\cref{subsec:proof:cor:single_index}, we prove that
there is a universal constant $\unicon_1 > 0$ such that
\begin{subequations}
\begin{align}
    \inprod{ \nabla \loglihoodind(\theta)}{ \thetastar - \theta} &
    \geq \unicon_{1} \enorm{ \theta - \thetastar}^{2 p} \quad
    \text{for all} \ \theta
    \in \Rspace^{d}, \label{eq:weak_conv_index}
\end{align}
and there are universal constants $(c, \unicon_2)$ such that for any
$r > 0$ and $\delta \in (0,1)$, as long as $n \geq \unicon \parenth{ d
  \log (d / \delta) }^{2 p}$, we have
\begin{align}
\sup_{\theta \in \ball (\thetastar, r)} \vecnorm{\nabla
  \loglihoodind_{n} (\theta) - \nabla \loglihoodind ( \theta) }{2} &
\leq \unicon_{2} \left( r^{p - 1} + r^{2p - 1} \right) \sqrt{\frac{d +
    \log(1/ \delta)}{n}}, \label{eq:empi_process_index}
\end{align}
\end{subequations}
with probability at least $1-\delta$. Therefore, the functions
$\weakcon$ and $\perturb$ in
Assumptions~\ref{item:weak_concavity_population}
and~\ref{item:weak_concavity_deviation} take the specific forms
\begin{align}
\label{eq:value_func_index}
\weakcon(r) = \unicon_{1} r^{2 p}, \quad \text{and} \quad \perturb(r)
= r^{p-1} + r^{2p - 1},
\end{align}
for all $r > 0$. Simple algebra shows that these functions satisfy
Assumptions~\ref{item:weak_concavity_growth_conditions}
and~\ref{item:weak_concavity_tail}.  With this set-up,
applying~\cref{theorem-main-weakly-convex} yields:
\begin{corollary}
\label{cor:single_index}
Consider the Bayesian single index model~\eqref{eq:single_index} with
true parameter $\thetastar = 0$ and link function $g(r) = r^{p}$ for
for some $p \geq 2$.  Then there are universal constants $\unicon,
\uniconprime$ such that for any $\delta \in (0,1)$, given a sample
size $n \geq \unicon' (d + \log(d / \delta))^{2p}$, we have
\begin{align*}
  \posterior \parenth{ \enorm{ \theta - \thetastar} \geq \unicon
    \parenth{\frac{d + \log(1 / \delta) + B}{n}}^{1/ (2 p)} \; \bigg|
    \; Z_{1}^{n}} \leq \delta
\end{align*}
with probability $1 - \delta$ over the data $Z_{1}^{n}$. Here, $B$ is the non-negative constant in Assumption~\ref{item:smooth_log_prior}.
\end{corollary}
\noindent
See~\cref{subsec:proof:cor:single_index} for the proof
of~\cref{cor:single_index}.

It is worth noting that the proof of~\cref{cor:single_index} actually
leads to the following stronger uniform perturbation bound:
\begin{multline*}
\sup_{\theta \in \ball (\thetastar, r)} \vecnorm{\nabla
  \loglihoodind_{n} (\theta) - \nabla \loglihoodind ( \theta) }{2}
\leq \unicon \; r^{p - 1} \left( \sqrt{\frac{d + \log
    \frac{1}{\delta}}{n} } + \frac{1}{n^{3/2}} \left( d + \log
\frac{n}{\delta} \right)^{p + 1} \right) \nonumber \\ + r^{2 p - 1}
\left( \sqrt{\frac{d + \log (1/\delta)}{n}} + \frac{1}{n^{3/2}} \left(
d + \log \frac{n}{\delta} \right)^{2 p + 1} \right),
\end{multline*}
valid for each $r > 0$ with probability $1 - \delta$.  The condition
$n \geq \unicon (d + \log(d / \delta))^{2p}$ is required to guarantee
that the RHS of the above display is upper bounded by the RHS of
equation~\eqref{eq:empi_process_index}; this bound permits us to
apply~\cref{theorem-main-weakly-convex} to establish the posterior
convergence rate of parameter under the Bayesian single index models.


\subsection{Bayesian Gaussian mixture models with multiple centers}\label{subsec:general-gauss-mixture}
We now consider a class of well-seperated location Gaussian mixture models. 
In particular, we consider $\mathrm{i.i.d.}$ data $X_1^\numobs$ from the mixture distribution $\frac{1}{K} \sum_{j = 1}^K \mathcal{N} \big( \gausscenterstar_j, I_d \big)$ 
for some $K \geq 2$ and $\gausscenterstar_1, \gausscenterstar_2, \cdots, \gausscenterstar_K$ are distinct parameters. 
We use the following Bayesian mixture model to fit the data:
\begin{align} 
\label{eq:Gaussian_mixture_general} 
    \gausscenter_1, \gausscenter_2, \cdots, \gausscenter_K \overset{\text{i.i.d.}}{\sim} \prior( \gausscenter), \quad c_{i} \in [K]
    \overset{\text{i.i.d.}}{\sim} \text{Cat}(1/K, 1/K, \cdots, 1/K), \quad
    X_{i}| c_{i}, \theta \overset{\text{i.i.d.}}{\sim} \NORMAL(
    \theta_{c_i}, I_{d}).
\end{align}
This model is well-specified in the sense that the true model belongs
to the class of models being considered, and the number of components
in the model equals the true number of (distinct) components.  The
model is identifiable only up to permutation of the labels, as there
are $M = K!$ many global minima of the population-level log-likelihood
that parametrizes the same probability distribution. Given a
permutation function $\sigma: [K] \rightarrow [K]$, we denote
$\thetastar_\sigma \mydefn \big(\gausscenterstar_{\sigma (1)},
\gausscenterstar_{\sigma(2)}, \cdots,
\gausscenterstar_{\sigma(K)}\big)$.  As an application of the results
from~\Cref{sec:without_global_concavity}, we establish the posterior
contraction rate of parameters as well as the Bernstein-von-Mises
phenomena for this model around $\thetastar_\sigma$, for each
permutation function $\sigma$.

To state the corollary, for each permutation function $\sigma: [K] \rightarrow [K]$, we define the Fisher information matrix:
    \begin{align*}
        \HessianStar_\sigma \mydefn \Exs_{\thetastar} \left[ \nabla_\theta \log p (X; \thetastar_\sigma) \cdot \nabla_\theta \log p (X; \thetastar_\sigma)^\top  \right].
    \end{align*}
By symmetry, the matrices $\HessianStar_\sigma$ are permutations of each other. In particular, for $\sigma_1, \sigma_2$, we have:
\begin{align*}
    \HessianStar_{\sigma_1} = (I_d \otimes P_{\sigma}) \HessianStar_{\sigma_2}  (I_d \otimes P_{\sigma})^\top,
\end{align*}
where $P_\sigma$ is a $K \times K$ permutation matrix defined by the permutation $\sigma$, and $\otimes$ denotes the Kronecker product. 

When $(\gausscenterstar_j)_{j \in [K]}$, it is known (see, e.g.~\cite{ho2019singularity}) that the Fisher information is positive definite. We denote its smallest eigenvalue $\mu \mydefn \lambda_{\min} \left( \HessianStar_{Id} \right) > 0$. For notational convenience, we also introduce the notation
\begin{align*}
  \sigma_X \mydefn \sup_{u \in \sphere^{d - 1}, j \in [K]} \vecnorm{\inprod{X - \gausscenterstar_j}{u}}{\psi_2},
\end{align*}
where $\vecnorm{Y}{\psi_2}$ denotes the Orlicz $\psi_2$ norm
  for a random variable $Y$. We can see that $1 \leq \sigma_X \leq c \left(1 + \sup_{j, \ell \in [K]} \vecnorm{\gausscenterstar_j - \gausscenterstar_\ell}{2} \right) < + \infty$.

The log-likelihood of this mixture model can have multiple global maxima due to the symmetry. We use $\thetamap_\sigma$ to denote the one corresponding to $\thetastar_\sigma$:
\begin{align*}
    \thetamap_\sigma \mydefn \arg\min_{\theta \in \arg\max \loglihood_\numobs} \vecnorm{\theta - \thetastar_\sigma}{2}.
\end{align*}
\begin{corollary}\label{cor:general-gaussian-mixture}
    Under the mixture of K location Gaussian distributions~\eqref{eq:Gaussian_mixture_general}, there exists $n_{\min} > 0$ depending on $\thetastar$, such that for any $\delta, \vartheta, \offpar \in (0, 1)$, given sample size $n \geq n_{\min} \big( \log \delta^{-1} + \log \vartheta^{-1} \big)^2$, we have the following concentration bounds on the posterior distribution:
\begin{subequations}
    \begin{align}
        \posterior& \biggr[\text{There exists permutation} \ \sigma: \vecnorm{\theta - \thetastar_\sigma}{2} \leq \frac{c K \sigma_X}{\strongconvex} \sqrt{\frac{K d \log (K d) + \log \delta^{-1}}{\numobs}} \nonumber \\
        & \hspace{20 em} + c \sqrt{\frac{\log \vartheta^{-1}}{\strongconvex \numobs}} \mid \DataX \biggr] \geq 1 - \vartheta, \quad \mbox{and} \label{eq:general-mixture-posterior-contraction} \\
        \posterior & \biggr[\text{There exists permutation} \ \sigma: \vecnorm{\theta - \thetamap_\sigma}{\HessianStar_\sigma} \geq  (1 + \offpar) \frac{d}{n} \nonumber \\
        & \hspace{6 em} + c \frac{1 + \log \kappa (\HessianStar_\sigma)}{\offpar} \Big( \frac{\log \vartheta^{-1}}{\numobs} + \frac{a' (\log \vartheta^{-1} + \log \delta^{-1})^2}{\numobs^2} \Big)  \mid \DataX \biggr] \geq 1 - \vartheta,\label{eq:general-mixture-bvm-type-contraction}
    \end{align}
    where $a' > 0$ is a constant depending on $K, d$ and $\thetastar$, while $c > 0$ is a universal constant.
\end{subequations}
\end{corollary}
\noindent See~\cref{app:subsec-proof-general-gaussian-mixture} for the
proof of this corollary.


\subsection{Bayesian location families with singularities}
\label{sec:ibra-kham}

Consider a one-dimensional location family $\{ f (\cdot -
\theta)\}_{\theta \in \real}$, where $f$ is a density function with
respect to the Lebesgue measure on $\real$. It was observed by
Ibragimov and Khasminskii~\cite{ibragimov1979asymptotic} that the
discontinuities and singularities in the density function $f$ reveal
more information about the location parameter, leading to rates of
parameters even faster than the usual $n^{-1/2}$-rates of regular
models. In this section, we show how~\cref{thm:local-weak-convex} can
be used to obtain the optimal posterior concentration rates of
parameters in such models. For our example, we only consider the
singularity of the second type (see Chapter 6.1 in the
book~\cite{ibragimov1979asymptotic}), while a similar argument can be
applied to the case of discontinuities in the densities. We leave an
extension of our framework to the first and third type of
singularities for the future work.

Without loss of generality, we assume the singularity happens at $0$. Following~\cite{ibragimov1979asymptotic}, given $\beta \in (0, 1 / 2)$, we assume the following representation:
\begin{align}
    f (x) =
    h (x) \exp \left(\ell (x) |x|^\beta \right) \quad \quad \quad \forall \ x \in \Rspace.\label{eq:singularity-second-type}
\end{align}
The function $h$ is assumed to be everywhere differentiable, with the following quantitative assumption:
\begin{align}
    \singconstone \mydefn \sup_{x \in \real} \abss{\frac{\partial}{\partial x} \log h (x)} < + \infty. \label{eq:assume-lip-log-h-in-singular-models}
\end{align}

 The function $\ell$ is smooth except for possible discontinuity at $0$, with additional assumption that $|\ell (0^-)| + |\ell (0^+)| > 0$. The fluctuations in $\ell$ can be absorbed into the pre-factor $h (\cdot)$. In such case, without loss of generality, we can assume that:
\begin{align*}
    \ell (x) = a \bm{1}_{\{x < 0\}} + b \bm{1}_{\{x > 0\}}.
\end{align*}

By the translation invariance of location families, we assume $\thetastar = 0$ without loss of generality. Though the empirical process condition~\ref{item:without_global_deviation} does not generally hold for the gradient of log-likelihood of singular location families, the analysis can still be done via the smoothing technique. In particular, let $\loglihoodsing_\numobs \mydefn \frac{1}{\numobs} \sum_{i = 1}^\numobs \log f (X_i - \theta)$, we define:
\begin{align}
    \smoothloglihood_n (\theta) \mydefn \frac{1}{2 a_n} \int_{\theta - a_n}^{\theta + a_n} \loglihoodsing_n (z) dz, \quad \smoothloglihood (\theta) \mydefn \Exs_{\thetastar} \brackets{ \smoothloglihood_n (\theta)}.
\end{align}
We can then define the smoothed posterior distribution:
\begin{align*}
    \widetilde{\posterior} (\theta) = \widetilde{Z}^{-1} \prior (\theta) \cdot \exp (- n \smoothloglihood_n (\theta) ), \quad \mbox{where} \quad \widetilde{Z} \mydefn \int \prior (\theta) \cdot \exp (- n \smoothloglihood_n (\theta) ) d \theta.
\end{align*}
For simplicity of presentation, we assume that the prior distribution $\prior$ is supported on the interval $[-1, 1]$, and satisfies the smoothness condition~\ref{item:smooth_log_prior} on its support.  Then, we prove in Appendix~\ref{subsec:proof-ibragimov} that there
exist constants $q_1, q_2, q_3, \localradius > 0$ that depend on the
density function $f$ but independent of $\numobs$ and $a_\numobs$,
such that:
\begin{subequations}
\begin{align*}
    - \inprod{\theta}{\nabla \widetilde{F}^{S} (\theta)} & \geq q_1
    |\theta|^{1 + 2 \beta} - q_2 a_\numobs^{1 + 2 \beta}, \quad
    \mbox{for } \theta \in (- \localradius/2,
    \localradius/2), \\ 
    \sup_{\theta
      \in [-1, 1]} \abss{\nabla \widetilde{F}^{S} (\theta) - \nabla
      \widetilde{F}_{n}^{S} (\theta)} & \leq q_3 \left(
    a_\numobs^{\beta - 1/2} \sqrt{\frac{\log \numobs /
        \delta}{\numobs}} + a_\numobs^{\beta - 1} \frac{\log \numobs /
      \delta}{\numobs}
    \right), 
\end{align*}
\end{subequations}
with probability $1 - \delta$. Based on these results, an application
of Theorem~\ref{thm:local-weak-convex} with local concavity assumption on the population
log-likelihood function leads to the following result on the posterior
convergence rates of parameters under model with density
function~\eqref{eq:singularity-second-type}.

\begin{corollary}
\label{cor:ibragimov}
Given a Bayesian location model with density specified in
equation~\eqref{eq:singularity-second-type} with $\beta \in (0, 1/2)$,
under above setup, there exists a pair of constants $(q_0, q')$
depending on the function $f$, such that given any $\delta \in (0,
1)$, for $\numobs \geq q_0 \log^{1 + \frac{1}{2 \beta}} \delta^{-1}$,
we have the following bound with probability $1 - \delta$:
    \begin{align*}
      \forall \vartheta \in (0, 1), \quad  \widetilde{\posterior} \left( |\theta| > q' \numobs^{- \frac{1}{1 + 2 \beta}} \big( \log^{\frac{1}{2 \beta}} \frac{\numobs}{\delta} + \log^{\frac{1}{1 + 2 \beta}} \vartheta^{-1} \big) \mid X_1^n \right) \leq \vartheta.
    \end{align*}
\end{corollary}
\noindent See~\cref{subsec:proof-ibragimov} for the proof of this
corollary.

\section{Multiple global maxima setting} 
\label{subsec:multiple_global_maxima}

When the population log-likelihood is non-convex, there may be
multiple global maxima.  Nonetheless, given some conditions on the
form of non-convexity, it is possible to establish a contraction
result that allows for multiple global maxima as a consequence
of~\cref{thm:local-weak-convex}. \\

More concretely, suppose that there is a finite collection
$\globalmaxima = \{\thetastar_1, \thetastar_2, \cdots, \thetastar_M\}$
of global maxima of the population log-likelihood function
$\loglihood$, and that the following conditions are in force:
\begin{enumerate}[label={\bf{(C.\arabic*)}}]
    \item\label{item:first_geometric_conditions_landscape} There
      exists $\localradius > 0$, such that for any $j \in [M]$, the
      conditions~\ref{item:without_global_geometry}
      and~\ref{item:without_global_deviation} hold true with
      parameters $\big( \lowerweakconcavemu_j, \alpha_j, \beta_j,
      \varepsilon_j(\numobs, \delta), \varsigma_j \big)$.
    \begin{subequations}
    \item\label{item:second_geometric_conditions_landscape} The gap $\gap$ for the log-likelihood outside the radius $\localradius$ is strictly positive, i.e.,
    \begin{align}
       \gap \mydefn \inf \left\{ \loglihood (\thetastar_1) - \loglihood (\theta): \theta \in \bigcap_{j = 1}^M \ball^{c} (\thetastar_j, \localradius) \right\} > 0. \label{eq:gap-def-in-multiple-global-max}
    \end{align}
    Furthermore, for any $R > 0$ and $\delta > 0$, there exists $\widebar{\noise}_{\numobs, \delta} (R) > 0$ with $\lim_{n \rightarrow + \infty} \widebar{\noise}_{\numobs, \delta} (R) \rightarrow 0$, such that with probability $1 - \delta$, we have:
    \begin{align}
        \sup_{\theta \in \ball (0, R)} \abss{\loglihood (\theta) - \loglihood_\numobs (\theta)} \leq \widebar{\noise}_{\numobs, \delta} (R). \label{eq:empirical-process-large-radius-in-multiple-global-max}
    \end{align}
    \end{subequations}
    In addition, the prior density function satisfies the lower bound $\max_{j \in [M]} \prior (\thetastar_j) \geq \prior_0$.
    \item\label{item:third_geometric_conditions_landscape} For any $\delta > 0$, there exists a radius $R_\delta > 0$, such that with probability $1 - \delta$, the following bound holds true:
       \begin{subequations}
    \begin{align}
        \forall \theta \notin \ball (0, R_0), \quad \inprod{\nabla \loglihood_n (\theta)}{\theta} \leq 0.\label{eq:condition-for-nonconvex-tail-loglihood}
    \end{align}
Additionally, there exists $c_\prior > 0$ such that
\begin{align}
\label{eq:condition-for-nonconvex-tail-prior}      
\forall \theta \notin \ball (0, R_\delta)^{c}, \quad - \inprod{\nabla
  \log \prior (\theta)}{\theta} \geq c_\prior \vecnorm{\theta}{2}^2.
\end{align}
\end{subequations}
\end{enumerate}
The last condition requires the prior density $\prior$ to have
sub-Gaussian tail. This condition is satisfied, for example, by any
Gaussian density. We use this condition to simplify the arguments of
unbounded parameter space, with quite weak
assumption~\eqref{eq:condition-for-nonconvex-tail-loglihood} required
on the log-likelihood function itself. Under stronger conditions on
the log-likelihood (for example, when the right-hand-side of
equation~\eqref{eq:condition-for-nonconvex-tail-loglihood} is replaced
by a quadratic function), this requirement on the prior density can be
removed.

An unconditional posterior concentration result can then be
established under this setup. Recall that the quantities $\Delta_0$
and $\bar{\noise}_{n, \delta}$ are defined in
equations~\eqref{eq:gap-def-in-multiple-global-max}
and~\eqref{eq:empirical-process-large-radius-in-multiple-global-max},
and for each $j \in [M]$, the parameters $(\mu_j, \alpha_j, \beta_j,
\varepsilon_j (n, \delta), \varsigma_j)$ are the parameters
in Assumptions~\ref{item:weak_concavity_population}
and~\ref{item:weak_concavity_deviation}.

\begin{corollary}\label{cor:final-nonconvex}
    Under Assumptions~\ref{item:first_geometric_conditions_landscape},
    ~\ref{item:second_geometric_conditions_landscape},
    and~\ref{item:third_geometric_conditions_landscape}, with
    probability $1 - 3 \delta$, denote $\widetilde{r}_0 \mydefn
    \localradius \wedge \sqrt{\frac{\Delta_0}{8 \smooth}}$, for sample
    size satisfying the inequality
    \begin{align*}
        & \numobs \geq \frac{4}{\Delta_0} \left( \log (1/\vartheta) +
      \log \prior_0^{-1} + d \log \frac{d}{\widetilde{r}_0} \right),
      \quad \mbox{and} \\ & \bar{\varepsilon}_{\numobs, \delta}\left(
      c (R_\delta \log (1/\vartheta) + \sqrt{(d + \log
        (1/\vartheta)) / c_\prior} \right) < \frac{\Delta_0}{4},
    \end{align*}
    we have the posterior concentration result for any $\vartheta >
    0$:
    \begin{align*}
        \posterior \left( \bigcup_{j = 1}^M \ball (\thetastar_j,
        r_n^{(j)}) \; \bigg| \; \DataX \right) \geq 1 - \vartheta,
    \end{align*}
    where the radius $r_n^{(j)}$ is defined as:
    \begin{align*}
          r_n^{(j)} \mydefn \left(\frac{\log (1/\vartheta) + d}{n
            \lowerweakconcavemu_j} +
          \frac{\smalloffset_j}{\lowerweakconcavemu_j}
          \right)^{\frac{1}{\alpha_j + 1}} + \left( {\frac{2
              \varepsilon_j (n,
              \delta)}{\lowerweakconcavemu_j}}\right)^{\frac{1}{\alpha_j
              - \beta_j}}  +
          \left(\frac{\boundconsprior}{n \lowerweakconcavemu_j}
          \right)^{\frac{1}{\alpha_j}}.
    \end{align*}
\end{corollary}
\noindent
The proof of~\cref{cor:final-nonconvex} is
in~\Cref{subsec:proof:cor:final-nonconvex}.

The contraction radius $r_\numobs^{(j)}$ around each $\thetastar_j$
corresponds to the contraction radius $r_\numobs$
in~\cref{thm:local-weak-convex} with corresponding local conditions on
the log-likelihood. In many examples such as well-specified Bayesian
mixture models, the global maxima of the population log-likelihood are
permutations of each other (see our example
in~\Cref{subsec:general-gauss-mixture}), and the contraction radii for
each center $\thetastar_j$ are the same. In general, however, the
global maxima of the population-level log-likelihood landscape can
have different geometric behaviors, leading to different contraction
radii around different centers.


\section{Proofs}
\label{sec:proof}

This section is devoted to the proofs of our main theorems.

\subsection{Proof of~\cref{thm:local-weak-convex}}
\label{subsec:proof:thm:local-weak-convex}

We begin with some notation and definitions that are central to the
analysis.  First, we define the annulus $\annulus(\thetastar,
\localradius) \defn \ball(\thetastar, \localradius) \setminus \ball
(\thetastar, \localradius/2)$.  Second, we define a pair of functions
with domain $\real^\usedim$ as follows:
\begin{subequations}\label{eq:func-pair-defn-in-local-weak-convex-proof}
\begin{align}
  \Psi (\theta) \mydefn \begin{cases} \loglihood (\theta),
    & \hspace{-0.6 em} \theta \in \ball (\thetastar,
    \localradius/2),\\ \left( 2 - \frac{2\vecnorm{\theta -
        \thetastar}{2}}{\localradius} \right) \loglihood \left(
    \frac{\localradius (\theta - \thetastar)}{2 \vecnorm{\theta -
        \thetastar}{2}} \right) + \left( \frac{2\vecnorm{\theta -
        \thetastar}{2}}{\localradius} - 1 \right) (\loglihood
    (\thetastar) - \frac{\smooth \localradius^2}{2}), & \hspace{-0.6
      em} \theta \in \annulus(\thetastar, \localradius), \\ F
    (\thetastar) - \frac{\smooth}{2} \vecnorm{\theta -
      \thetastar}{2}^2, & \hspace{-0.6 em} \theta \in
    \ball^{c}(\thetastar, \localradius),
  \end{cases}\label{eq:psi-func-defn-in-local-weak-convex-proof}
\end{align}
and
\begin{align}
  \zeta_n (\theta) \mydefn \begin{cases} (\loglihood_n(\theta) -
    \loglihood (\theta)) - (\loglihood_n (\thetastar) - \loglihood
    (\thetastar)) &\theta \in \ball(\thetastar,
    r_0/2),\\ 2\frac{\localradius - \vecnorm{\theta -
        \thetastar}{2}}{\localradius} \zeta_n \left( \thetastar +
    \frac{\localradius}{2}\cdot \frac{\theta -
      \thetastar}{\vecnorm{\theta - \thetastar}{2}} \right) & \theta
    \in \annulus(\thetastar, \localradius), \\ 0 & \theta \in
    \ball^{c}(\thetastar, \localradius).\label{eq:zeta-func-defn-in-local-weak-convex-proof}
        \end{cases}
    \end{align}
\end{subequations}
A few comments to provide intuition are in order.  Inside the ball
$\ball (\thetastar, \localradius / 2)$, the function $\Psi$ is the
population log-likelihood, whereas the function $\zeta_n$ specifies a
``noise'' term that can be controlled using empirical process
methods. On the other hand, outside of the ball $\ball(\thetastar,
\localradius)$, the function $\Psi$ corresponds to a quadratic upper
bound on the population log-likelihood $F$, whereas the function
$\zeta_n$ is identically zero.  In the annulus region between the two
balls, we interpolate linearly between the two behaviors.
    
It can be verified that both $\Psi$ and $\zeta_n$ are almost
everywhere continuously differentiable and locally Lipschitz
functions. Moreover, a direct computation yields
\begin{align*}
  \inprod{\nabla \Psi (\theta)}{ \theta - \thetastar} = \begin{cases}
    \inprod{\nabla F(\theta)}{\theta - \thetastar}, & \theta \in \ball
    (\thetastar, \localradius/2),\\ \frac{2 \vecnorm{\theta -
        \thetastar}{2}}{\localradius} \left(\loglihood (\thetastar) -
    \frac{\smooth}{2} \localradius^2 - \loglihood (\frac{\localradius
      (\theta - \thetastar)}{2 \vecnorm{\theta - \thetastar}{2}})
    \right), & \theta \in \annulus(\thetastar,
    \localradius),\\ -\smooth \vecnorm{\theta - \thetastar}{2}^2, &
    \theta \in \ball^{c}(\thetastar, \localradius).
  \end{cases}
\end{align*}
By Assumption~\ref{item:without_global_geometry}, we have the
following inequalities:
\begin{align*}
  \inprod{\nabla \Psi (\theta)}{ \theta - \thetastar}
  \leq
  \begin{cases} - \lowerweakconcavemu \vecnorm{\theta -
      \thetastar}{2}^{\alpha + 1} + \smalloffset, & \theta \in \ball
    (\thetastar, \localradius/2),\\ - \frac{3 \smooth \localradius}{8}
    \vecnorm{\theta - \thetastar}{2}, & \theta \in
    \annulus(\thetastar, \localradius), \end{cases}
    \end{align*}
Based on the above bounds, we define the following function:
\begin{align*}
  \psi(r) \mydefn \begin{cases} \lowerweakconcavemu r^{\alpha + 1},& r
    \in [0, \localradius / 2],\\ 2 (\localradius - r)
    \lowerweakconcavemu \localradius^{\alpha} + \smooth \localradius
    (2r - \localradius), & r \in (\localradius / 2,
    \localradius],\\ \smooth r^2, & r > \localradius.
  \end{cases}
\end{align*}
Since $\alpha \geq 1$, it is clear that $\psi$ is a convex function,
and we have:
\begin{align*}
  \inprod{\nabla \Psi (\theta)}{\theta - \thetastar} \leq - \psi
  (\vecnorm{\theta - \thetastar}{2}) + \smalloffset \quad \quad
  \forall \theta \in \real^d.
\end{align*}
For the function $\zeta_n$, we have
\begin{align*}
  \abss{\inprod{\nabla \zeta_n (\theta)}{\theta - \thetastar}}
  \leq \begin{cases} \vecnorm{\nabla \loglihood_n (\theta)}{2} \cdot
    \vecnorm{\theta - \thetastar}{2}, & \theta \in \ball (\thetastar,
    \localradius / 2), \\
\frac{2 \vecnorm{\theta - \thetastar}{2}}{\localradius} \abss{ \zeta_n
  \left( \thetastar + \frac{\localradius}{2}\cdot \frac{\theta -
    \thetastar}{\vecnorm{\theta - \thetastar}{2}} \right)}, & \theta
\in \annulus(\thetastar, \localradius),\\ 0,& \theta \in
\ball^{c}(\thetastar, \localradius).
  \end{cases}
\end{align*}
Note that for $\theta \in \annulus(\thetastar, \localradius)$,
conditionally on the event that the inequality in
Assumption~\ref{item:without_global_deviation} holds, we have
the bound
\begin{align*}
\frac{2 \vecnorm{\theta - \thetastar}{2}}{\localradius} \abss{ \zeta_n
  \left( \thetastar + \frac{\localradius}{2}\cdot \frac{\theta -
    \thetastar}{\vecnorm{\theta - \thetastar}{2}} \right)} & \\
& \hspace{-4 em} \leq 2 \int_0^1 \vecnorm{(\nabla \loglihood_n -
  \nabla \loglihood_n ) (\gamma \theta + (1 - \gamma) \thetastar) }{2}
\cdot \vecnorm{\theta - \thetastar}{2} d \gamma \\
& \hspace{-4 em}  \leq 2 \varepsilon (n, \delta) \vecnorm{\theta -
  \thetastar}{2}^{\beta + 1}.
\end{align*}
Therefore, on the event that
Assumption~\ref{item:without_global_deviation} holds, we have
\begin{align}
 \label{eq:key_inequality}  
  \abss{\inprod{\nabla \zeta_n (\theta)}{\theta - \thetastar}} \leq 2
  \varepsilon(n, \delta) \vecnorm{\theta - \thetastar}{2}^{\beta + 1}
  \bm{1}_{\{\theta \in \ball (\thetastar, \localradius)\}}.
\end{align}
Now we consider the distribution $\widetilde{\posterior}_n$ given by
\begin{align*}
 \widetilde{\posterior}_n (\theta) \mydefn \widetilde{Z}_n^{-1} \exp \left(
 n\Psi (\theta) + n \zeta_n (\theta) + \log \prior (\theta) \right),
 \quad \quad \mbox{for all $\theta \in \real^d$,}
 \end{align*}
where $\widetilde{Z}_n$ is a normalizing constant. Intuitively, the
density function $\widetilde{\posterior}_\numobs$ is a ``localized''
version of the posterior distribution: the distribution
$\widetilde{\posterior}_\numobs$ inherits the local behavior of the
posterior $\posterior$ itself, while behavior as Gaussian outside this
local neighborhood. This allows us to capture the effect of local
geometry of the log-likelihood function, and apply an argument similar
to the proof of~\cref{theorem-main-weakly-convex}.
    
Within the ball $\ball (\thetastar, \localradius / 2)$, we have
\begin{align*}
\frac{\widetilde{\posterior}_n (\theta)}{\posterior (\theta | X_1^n)} =
\frac{Z_n}{\widetilde{Z}_n} \exp \left( - n (\loglihood_n (\thetastar) -
\loglihood (\thetastar)) \right),
    \end{align*}
which is a fixed quantity independent of $\theta$. So for $r_n <
\localradius / 2$, we obtain
\begin{align*}
  \posterior \left( \ball (\thetastar, \localradius / 2) | X_1^n
  \right)^{-1} \posterior \left( \ball (\thetastar, r_n) | X_1^n
  \right) & = \widetilde{\posterior}_n \left( \ball (\thetastar,
  \localradius / 2) \right)^{-1} \widetilde{\posterior}_n \left( \ball
  (\thetastar, r_n) \right) \\ & \geq \widetilde{\posterior}_n \left(
  \ball (\thetastar, r_n) \right) .
\end{align*}
Conditionally on $X_1^n$, the distribution $\widetilde{\posterior}_n$ can
be seen as the stationary distribution for the following It\^{o}
diffusion process:
\begin{align}
\label{eq:diffusion-localized-posterior}  
d \diffusionforposteriorTilde_t = \nabla \left(\Psi
(\diffusionforposteriorTilde_t) + \zeta_n
(\diffusionforposteriorTilde_t) + \frac{1}{n}\log \prior \right) dt
+ \sqrt{\frac{2}{\numobs}} d B_t, \quad \diffusionforposteriorTilde_0 =
\thetastar.
\end{align}
On the other hand, for $t \geq 0$, we define the Lyapunov function
$\lyap$ as:
\begin{align*}
  \lyap_t \mydefn \Exs
  \brackets{\vecnorm{\diffusionforposteriorTilde_t - \thetastar}{2}^{p
      - 2} \psi \left(\vecnorm{\diffusionforposteriorTilde_t -
      \thetastar}{2} \right)}.
\end{align*}
For $q \in (0, p - 1]$, we define $g_q (z) \mydefn z^{\frac{p - 2}{q}}
  \psi (z^{\frac{1}{q}})$ for all $z$.  Now, we claim that the
  one-dimensional function $g_{q}$ is strictly increasing and
  convex. Furthermore, we have
\begin{align}
  \Exs \vecnorm{\diffusionforposteriorTilde_T - \thetastar}{2}^p
  &\leq - p \int_0^T \lyap_t dt + \frac{p \boundconsprior}{n}
  \int_0^T g_{p - 1}^{-1} (\lyap_t) dt + 2 \varepsilon(n,
  \delta) \lowerweakconcavemu^{- \frac{p + \beta -1}{p + \alpha
      - 1}} \int_0^T \lyap_t^{\frac{p + \beta - 1}{p + \alpha -
      1}} dt \nonumber \\ & \hspace{13 em} + \left( p
  \smalloffset + \frac{p (p + d - 1)}{n} \right) \int_0^T g_{p -
    2}^{-1} (\lyap_t) dt, \label{eq:claim_second}
\end{align}
when $\alpha > \beta$.

Taking the above claim as given for the moment, let us now complete
the proof of the theorem. When $\alpha > \beta$, we define the
function $\phi_1$ as follows:
\begin{align*}
  \phi_1 (h) \mydefn - h + \frac{\boundconsprior}{n} g_{p -
    1}^{-1} (h) + 2 \varepsilon(n, \delta)
  \lowerweakconcavemu^{- \frac{p + \beta -1}{p + \alpha - 1}}
  h^{\frac{p + \beta - 1}{p + \alpha - 1}} + \big( \smalloffset
  + \frac{p + d - 1}{n} \big) g_{p - 2}^{-1} (h).
\end{align*}
By~\cref{lemma-integral-ineq-limit-control}, since both $\lim_{t
  \rightarrow + \infty} \Exs \vecnorm{\diffusionforposteriorTilde_t -
  \thetastar}{2}^p$ and $\lim_{t \rightarrow + \infty} \lyap_t$ exist,
we have
\begin{align*}
\lim_{T \rightarrow + \infty} \lyap_T \leq \inf \left\{ h > 0:
~\forall h' >h, \phi_1 (h) < 0 \right\}.
\end{align*}
Note that $\phi_1$ is a concave function and the equation $\phi_{1}(h)
= 0$ only admits two solutions.  One of them is $h = 0$ and the other
one is the RHS of the above bound of $\lim_{T \rightarrow + \infty}
\lyap_T$.  Therefore, if $\frac{\boundconsprior}{n
  \lowerweakconcavemu}\leq (\localradius / 2)^\alpha$, $\frac{p + d}{n
  \lowerweakconcavemu} \leq (\localradius / 2)^{\alpha + 1}$ and
$\frac{2 \varepsilon (n, \delta)}{\lowerweakconcavemu} \leq
(\localradius / 2)^{\alpha - \beta}$, we have
\begin{align*}
  \lim_{T \rightarrow + \infty} \lyap_T \leq \lyap_* \mydefn
  \lowerweakconcavemu^{- \frac{p - 1}{\alpha}}
  \left(\frac{\boundconsprior}{n} \right)^{\frac{p - 1 +
      \alpha}{\alpha}} \vee \lowerweakconcavemu^{- \frac{p -
      2}{\alpha + 1}} \left(\frac{p + d}{n} + \smalloffset
  \right)^{\frac{p - 1 + \alpha}{\alpha + 1}} \vee (2
  \varepsilon (n, \delta))^{\frac{p + \alpha - 1}{\alpha -
      \beta}} \lowerweakconcavemu^{- \frac{p + \beta - 1}{\alpha
      - \beta}}.
\end{align*}
An application of Jensen's inequality shows that
\begin{align*}
  \Exs_{\widetilde{\posterior}_n} \left( \vecnorm{\theta -
    \thetastar}{2}^{p - 1} \right) = \lim_{T \rightarrow + \infty}
  \Exs \vecnorm{\diffusionforposteriorTilde_T - \thetastar}{2}^{p - 1}
  \leq g^{-1} \left( \lim_{T \rightarrow + \infty} \lyap_T \right).
\end{align*}
Putting the above results together, for $ \lyap_* \leq
\lowerweakconcavemu (\localradius / 2)^{p - 1 + \alpha}$ we obtain
that
\begin{align*}
  \left( \Exs_{\widetilde{\posterior}_n} \vecnorm{\theta -
    \thetastar}{2}^{p - 1} \right)^{\frac{1}{p - 1}} \leq \left(
  \frac{\lyap_*}{\lowerweakconcavemu} \right)^{\frac{1}{p + \alpha -
      1}} \leq \left(\frac{\boundconsprior}{n \lowerweakconcavemu}
  \right)^{\frac{1}{\alpha}} \vee \left(\frac{p + d}{n
    \lowerweakconcavemu} + \smalloffset \right)^{\frac{1}{\alpha + 1}}
  \vee \left( {\frac{2 \varepsilon (n,
      \delta)}{\lowerweakconcavemu}}\right)^{\frac{1}{\alpha -
      \beta}}.
    \end{align*}
Hence, we obtain the conclusion of the theorem when $\alpha > \beta$.
    
    
\subsubsection{Proof of claim~\eqref{eq:claim_second}}

By It\^{o}'s formula, for any $p \geq 2$, we have
\begin{align}
& \Exs \vecnorm{\diffusionforposteriorTilde_T - \thetastar}{2}^{p}
  \leq p I_{1} + \frac{p}{n} I_{2} + p I_{3} + \frac{p (p + d - 1)}{n}
  I_{4}, \label{eq:key_inequality_first_regime}
\end{align}
where
\begin{align*}
I_{1} & \mydefn \Exs \int_0^T \inprod{\nabla \Psi
  (\widetilde{\Theta}_t^{(n)})}{\widetilde{\Theta}_t^{(n)} -
  \thetastar} \vecnorm{\widetilde{\Theta}_t^{(n)} - \thetastar}{2}^{p
  - 2} dt, \\
I_{2} & \mydefn \Exs \int_0^T \inprod{\nabla \log
  \prior}{\diffusionforposteriorTilde_t - \thetastar}
\vecnorm{\diffusionforposteriorTilde_t - \thetastar}{2}^{p - 2} dt, \\
I_{3} & \mydefn \Exs \int_0^T \abss{ \inprod{ \nabla \zeta_n (
    \widetilde{\Theta}_t^{(n)})}{ \widetilde{\Theta}_t^{(n)} - \thetastar}}
\cdot \vecnorm{\widetilde{\Theta}_t^{(n)} - \thetastar}{2}^{p - 2} dt,
\quad \mbox{and} \\
I_{4} & \mydefn \Exs \int_0^T \vecnorm{\widetilde{\Theta}_t^{(n)} -
  \thetastar}{2}^{p - 2} dt.
\end{align*}

\begin{subequations}
Beginning with the first term $I_1$, by using the properties of the
function $\Psi$, we have the bound
\begin{align}
  I_1 & \leq - \Exs \int_0^T \psi
  \left(\vecnorm{\diffusionforposteriorTilde_t - \thetastar}{2}
  \right) \vecnorm{\diffusionforposteriorTilde_t - \thetastar}{2}^{p -
    2} dt + \smalloffset \cdot \Exs \int_0^T
  \vecnorm{\diffusionforposteriorTilde_t - \thetastar}{2}^{p - 2} dt
  \nonumber \\
& = - \int_0^T \lyap_t dt + \smalloffset \int_0^T \Exs \left[
    \vecnorm{\diffusionforposteriorTilde_t - \thetastar}{2}^{p -
      2}\right] dt. \label{eq:bound_I1_first_regime}
\end{align}
Turning to the second term $I_2$, applying
Assumption~\ref{item:smooth_log_prior} yields the upper bound
$$I_2 \leq \boundconsprior \Exs \int_0^T
  \vecnorm{\diffusionforposteriorTilde_t - \thetastar}{2}^{p - 1}
  dt.$$ 
  Applying Jensen's inequality then leads to
\begin{align*}
  g_{p - 1}\left( \Exs \vecnorm{\diffusionforposteriorTilde_t -
    \thetastar}{2}^{p - 1} \right) & \leq \Exs g_{p - 1} \left(
  \vecnorm{\diffusionforposteriorTilde_t - \thetastar}{2}^{p - 1}
  \right) \\ & = \Exs \left(\vecnorm{\diffusionforposteriorTilde_t -
    \thetastar}{2}^{p - 2} \psi
  \left(\vecnorm{\diffusionforposteriorTilde_t - \thetastar}{2}
  \right) \right) = \lyap_t.
\end{align*}
Collecting the above results, we find that
\begin{align}
  I_2 \leq B_{2} \int_0^T g_{p - 1}^{-1} (\lyap_t) dt. \label{eq:bound_I2_first_regime}
\end{align}
For the third term $I_3$, from the bound~\eqref{eq:key_inequality} we have
\begin{align*}
  I_3 \leq 2 \varepsilon (n, \delta) \cdot \Exs \int_0^T
  \vecnorm{\diffusionforposteriorTilde_t - \thetastar}{2}^{p + \beta -
    1} \bm{1}_{\{\diffusionforposteriorTilde_t \in \ball (\thetastar,
    \localradius)\}} dt.
\end{align*}
Since $\alpha > \beta$, invoking Jensen's inequality leads to
\begin{align*}
  \Exs \left(\vecnorm{\diffusionforposteriorTilde_t -
    \thetastar }{2}^{\beta + p - 1}
  \bm{1}_{\{\diffusionforposteriorTilde_t \in \ball
    (\thetastar, \localradius)\}} \right) & \leq \left( \Exs
  \left(\vecnorm{\diffusionforposteriorTilde_t - \thetastar
  }{2}^{\alpha + p - 1} \bm{1}_{\{\diffusionforposteriorTilde_t
    \in \ball (\thetastar, \localradius)\}} \right)
  \right)^{\frac{\beta + p - 1}{\alpha + p - 1}}\\ & \hspace{-
    2 em} \leq \lowerweakconcavemu^{- \frac{p + \beta - 1}{p +
      \alpha - 1} } \Exs \left( \psi\left(
  \vecnorm{\diffusionforposteriorTilde - \thetastar}{2} \right)
  \vecnorm{\diffusionforposteriorTilde - \thetastar}{2}^{p - 2}
  \right)^{\frac{p + \beta - 1}{p + \alpha - 1}} \\ & \hspace{-
    2 em} = \lowerweakconcavemu^{- \frac{p + \beta - 1}{p +
      \alpha - 1} } \lyap_t^{\frac{p + \beta - 1}{p + \alpha -
      1}}.
\end{align*}
Consequently, the term $I_{3}$ is upper bounded as
\begin{align}
\label{eq:bound_I3_first_regime}  
I_3 \leq 2 \varepsilon(n, \delta) \lowerweakconcavemu^{- \frac{p +
    \beta -1}{p + \alpha - 1}} \int_0^T \lyap_t^{\frac{p + \beta -
    1}{p + \alpha - 1}} dt.
\end{align}
For the fourth term $I_{4}$, invoking Jensen's inequality yields
\begin{align*}
g_{p - 2}\left( \Exs \vecnorm{\diffusionforposteriorTilde_t -
  \thetastar}{2}^{p - 2} \right) & \leq \Exs g_{p - 2} \left(
\vecnorm{\diffusionforposteriorTilde_t - \thetastar}{2}^{p - 2}
\right) \nonumber \\
& = \Exs \left(\vecnorm{\diffusionforposteriorTilde_t -
    \thetastar}{2}^{p - 2} \psi
  \left(\vecnorm{\diffusionforposteriorTilde_t - \thetastar}{2}
  \right) \right) = \lyap_t.
\end{align*}
The above inequality shows that
\begin{align}
  I_{4} \leq \int_0^T g_{p - 2}^{-1} (\lyap_t)
  dt. \label{eq:bound_I4_first_regime}
\end{align}
\end{subequations}

Collecting the bounds for $I_1$---$I_4$ given in
equations~\eqref{eq:bound_I1_first_regime}--\eqref{eq:bound_I4_first_regime}
respectively, we find that $\Exs
\vecnorm{\diffusionforposteriorTilde_T - \thetastar}{2}^p$ is at most
\begin{multline*}
- p \int_0^T \lyap_t dt + \frac{p \boundconsprior}{n} \int_0^T g_{p -
  1}^{-1} (\lyap_t) dt \\
+ 2 \varepsilon(n, \delta) \lowerweakconcavemu^{- \frac{p + \beta
    -1}{p + \alpha - 1}} \int_0^T \lyap_t^{\frac{p + \beta - 1}{p +
    \alpha - 1}} dt + \left(p \smalloffset + \frac{p (p + d - 1)}{n}
\right) \int_0^T g_{p - 2}^{-1} (\lyap_t) dt.
\end{multline*}
Thus, we have established the claim~\eqref{eq:claim_second}.

\subsubsection{Structure of the function $g_q$}

For $q \in (0, p - 1]$, we define $g_q (z) \mydefn z^{\frac{p -
      2}{q}}\psi (z^{\frac{1}{q}})$ for all $z$.  Since $\psi$ is
  strictly increasing and $p \geq 2$, we can check that $g_{q}$ is
  strictly increasing.  By taking the derivative of $g_{q}$, we have
\begin{align*}
  \frac{dg_q (z)}{dz} = \frac{p - 2}{q} z^{\frac{p - q - 1}{q}}
  \frac{\psi (z^{\frac{1}{q}})}{z^{\frac{1}{q}}} + \frac{1}{q}
  z^{\frac{p - 1 - q}{q}} \psi' (z^{\frac{1}{q}}).
\end{align*}
By the construction, $\psi$ is a convex function on $\real_+$ and
$\psi(0) = 0$.  Therefore, $\psi'$ is non-decreasing, and therefore
$\frac{1}{r} \psi (r) = \frac{1}{r} \int_0^r \psi' (s) ds$ is also
non-decreasing.  For $q \leq p - 1$, the function
$z^{\frac{p-q-1}{q}}$ is also non-decreasing in $z$, and apparently,
for $r \geq 0$, both $\psi' (r)$ and $\psi (r) / r$ are
non-negative. Therefore, for any $q \in (0, p - 1]$, the function
  $\frac{dg_q}{dz}$ is non-decreasing in $z$. Therefore, $g_q$ is a
  convex function.
  

\subsection{Proof of~\cref{thm:non-asymp-credible-set}}
\label{subsec:proof:thm:non-asymp-credible-set}

For any fixed $T > 0$, we define the sequence of potential functions
$\lyap_t: \real^d \rightarrow \real$
\begin{align*}
  \lyap_t(\theta) & \mydefn (\theta - \thetamap)^\top \HessianStar
  e^{\HessianStar (t - T)} (\theta - \thetamap), \quad \mbox{for each
    $t \in [0, T]$.}
\end{align*}
Once again, we consider the diffusion process with the initial
condition $\theta_0 = \thetamap$:
\begin{align*}
    d \theta_t = - \nabla \loglihood_\numobs (\theta_t) dt +
    \frac{1}{\numobs} \nabla \log \prior (\theta_t) dt + d B_t.
\end{align*}
Using It\^{o}'s formula, for $t \in [0, T]$, we have
\begin{align}
  \lyap_t (\theta_t) & = \int_0^t \frac{\partial \lyap_s}{\partial s}
  (\theta_s) ds - \int_0^t \inprod{\nabla \lyap_s (\theta_s)}{ \nabla
    F_n (\theta_s) - \frac{\nabla \log \prior (\theta_s)}{n} } ds
  \nonumber \\
  & \qquad + \sqrt{\frac{2}{n}}\int_0^t \inprod{\nabla \lyap_s
    (\theta_s)}{d B_s} + \frac{1}{n}\int_0^t \Delta \lyap_s (\theta_s)
  ds \nonumber \\
& = \underbrace{\int_0^t \left( \HessianStar (\theta_s - \thetamap) -
    \nabla F_n (\theta_s) + \frac{\nabla \log \prior (\theta_s)}{n}
    \right)^\top \HessianStar e^{\HessianStar (s - T)} (\theta_s -
    \thetamap) ds}_{\mydefn I_1 (t)} \nonumber \\
\label{eq:ito-formula-in-non-asymp-credible-set}  
& \qquad + \underbrace{\sqrt{\frac{2}{n}} \int_0^t (\theta_s -
  \thetamap)^\top \HessianStar e^{(s - T) \HessianStar} d B_s}_{I_2
  (t)} + \underbrace{\frac{1}{n} \int_0^t \mathrm{Tr} \left(
  \HessianStar e^{\HessianStar (s - T)} \right) ds}_{I_3 (t)}.
\end{align}
Note that the matrices $\HessianStar$ and $e^{(s - T) \HessianStar}$
commute, so that we may write their product in an arbitrary order.

Defining the linearization error
\begin{align*}
\Delta_s \mydefn (\holderconst + \noiseone^{(2)} (n, \delta)) \left(
\vecnorm{\theta_s - \thetastar}{2} + \vecnorm{\thetamap -
  \thetastar}{2} \right) + \noisetwo^{(2)} (n, \delta) +
\frac{\smoothprior}{n},
\end{align*}
we claim that the following bounds hold for each $t \in [0, T]$:
\begin{subequations}
\begin{align}
I_{1}(t) & \leq \tfrac{2 + \log \kappa (\HessianStar)}{a} \sup_{0 \leq
  s \leq t} \lyap_s (\theta_s) \nonumber \\
  & \hspace{ 8 em} + a \int_0^t \Delta_s^2 \left(
\vecnorm{\theta_s - \thetastar}{2}^2 + \vecnorm{\thetamap -
  \thetastar}{2}^2 \right) e^{- \frac{\lambda_{\min}
    (\HessianStar)}{2} (s - T)} ds, \label{eq:I1-bound-in-non-asymp-credible-set} 
\end{align}
\begin{align}
\label{eq:I2-bound-in-non-asymp-credible-set}   
\left( \Exs \sup_{0 \leq t \leq T} |I_2 (t)|^p \right)^{1/p} & \leq
\unicon \sqrt{\frac{p \left(1 + \log \kappa (\HessianStar)
    \right)}{n}} \left( \Exs \sup_{0\leq t \leq T} \lyap_t
(\theta_t)^{p/2} \right)^{1/p}, \quad \mbox{and} \\
\label{eq:I3-bound-in-non-asymp-credible-set}    
I_3 (t) & \leq \frac{d}{n}.
\end{align}
\end{subequations}
Here $\unicon > 0$ is an universal constant. We prove all of these
bounds in the subsections to follow.

Taking these bounds as given for the moment, let us complete the proof
of the theorem. By Jensen's inequality, for an even integer $p \geq
2$, the moments of the integral term in
equation~\eqref{eq:I1-bound-in-non-asymp-credible-set} can be bounded
as
\begin{multline}
\label{eq:linearization-error-integral-in-non-asymp-credible-set}  
\Exs \left( \int_0^T \Delta_s^2 \left( \vecnorm{\theta_s -
  \thetastar}{2}^2 + \vecnorm{\thetamap - \thetastar}{2}^2 \right)
e^{- \frac{\lambda_{\min} (\HessianStar)}{2} (s - T)} ds
\right)^p\\ \leq \left( \frac{c}{\lambda_{\min} (\HessianStar)}
\right)^{p - 1} \cdot \Exs \int_0^T \Delta_s^{2p} \left(
\vecnorm{\theta_s - \thetastar}{2}^{2p} + \vecnorm{\thetamap -
  \thetastar}{2}^{2p} \right) e^{- \frac{\lambda_{\min}
    (\HessianStar)}{2} (s - T)} ds,
\end{multline}
for a universal constant $c > 0$.

For any $\offpar \in (0, 1)$, by taking supremum on both sides of the
decomposition~\eqref{eq:ito-formula-in-non-asymp-credible-set},
combining with the
bounds~\eqref{eq:I1-bound-in-non-asymp-credible-set}
and~\eqref{eq:I3-bound-in-non-asymp-credible-set}, and taking $a =
c\frac{2 + \log \kappa (\HessianStar)}{\offpar}$, we arrive at the
inequality
\begin{multline*}
    \sup_{0 \leq t \leq T} \lyap_t (\theta_t) \leq (1 + \offpar)
    \left( \frac{d}{n} + \sup_{0 \leq t \leq T} I_2 (t) \right)\\ +
    \frac{c(2 + \log \kappa (\HessianStar))}{\offpar} \int_0^T
    \Delta_t^2 \left( \vecnorm{\theta_t - \thetastar}{2}^2 +
    \vecnorm{\thetamap - \thetastar}{2}^2 \right) e^{-
      \frac{\lambda_{\min} (\HessianStar)}{2} (t - T)} dt.
\end{multline*}
Taking $p$-th moment on both sides of the inequality, combining with
the bounds~\eqref{eq:I2-bound-in-non-asymp-credible-set}
and~\eqref{eq:linearization-error-integral-in-non-asymp-credible-set},
and applying Minkowski's inequality, we arrive at the bound
\begin{multline*}
    \left( \Exs \sup_{0 \leq t \leq T} \lyap_t (\theta_t)^p
    \right)^{1/p} \leq (1 + \offpar) \frac{d}{n} + \sqrt{\frac{ c p (1
        + \log \kappa (\HessianStar))}{n}} \cdot \left( \Exs \sup_{0
      \leq t \leq T} \lyap_t (\theta_t)^p \right)^{\frac{1}{2p}}\\ +
    \frac{c(2 + \log \kappa (\HessianStar))}{\offpar \lambda_{\min}
      (\HessianStar)} \left( \sup_{0 \leq t \leq T} \Exs \left[
      \Delta_t^{2p} \left( \vecnorm{\theta_t - \thetastar}{2}^{2p} +
      \vecnorm{\thetamap - \thetastar}{2}^{2p} \right) \right]
    \right)^{1/p}.
\end{multline*}
Substituting with the definition of the last term, and applying
Young's inequality, we find that
\begin{align*}
     \left( \Exs \sup_{0 \leq t \leq T} \lyap_t (\theta_t)^p
     \right)^{1/p} \leq (1 + \offpar) \frac{ d}{n} + c \frac{1 + \log
       \kappa (\HessianStar)}{\offpar} \left( \frac{p}{n} +
     \frac{\mathcal{H}_n (p, \delta)}{\lambda_{\min} (\HessianStar)}
     \right),
\end{align*}
where the high-order term $\mathcal{H}_n (p, \delta)$ is defined as
\begin{align*}
    \mathcal{H}_n (p, \delta) \mydefn & (\holderconst +
    \noiseone^{(2)} (n, \delta))^2 \left( \Exs_\posterior
    \vecnorm{\theta - \thetastar}{2}^{4p} \right)^{1/p}\\ &\quad\quad+
    \vecnorm{\thetamap - \thetastar}{2}^2 \left( \noisetwo^{(2)} (n,
    \delta)^2 + \frac{\smoothprior^2}{n^2} + (\holderconst +
    \noiseone^{(2)} (n, \delta))^2 \vecnorm{\thetamap -
      \thetastar}{2}^2 \right).
\end{align*}
Putting together the pieces yields the conclusion of the theorem.


\subsubsection{Proof of claim~\eqref{eq:I1-bound-in-non-asymp-credible-set}}

We first bound the term $I_1(t)$.  Noting the defining identity
$\nabla F_n (\thetamap) + \frac{1}{n} \nabla \log \prior (\thetamap) =
0$, we have the following bound:
\begin{align*}
    &\vecnorm{\HessianStar (\theta_s - \thetamap) - \nabla F_n (\theta_s) + \nabla \log \prior (\theta_s) / n}{2}\\
    &= \vecnorm{\int_0^1 \left( \HessianStar  - \nabla^2 F_n \big( \gamma \theta_s + (1 - \gamma) \thetamap \big) 
    + \nabla^2 \log \prior \big( \gamma \theta_s + (1 - \gamma) \thetamap \big) / n \right) (\theta_s - \thetamap) d \gamma}{2}\\
    &\leq \int_0^1 \opnorm{ \HessianStar  - \nabla^2 F_n \big( \gamma \theta_s + (1 - \gamma) \thetamap \big) 
    + \nabla^2 \log \prior \big( \gamma \theta_s + (1 - \gamma) \thetamap \big) / n } \cdot \vecnorm{\theta_s - \thetamap}{2} d \gamma.
\end{align*}
By Assumptions~\ref{item:Bvm_population},~\ref{item:Bvm_deviation}, and~\ref{item:smooth_population_condition}, for any $\theta \in \real^d$, we have the bound
\begin{align*}
    & \hspace{- 3 em} \opnorm{ \HessianStar  - \nabla^2 F_n (\theta) + \nabla^2 \log \prior (\theta) / n }\\
    &\leq \opnorm{\HessianStar - \nabla^2 F (\theta)} + \opnorm{\nabla^2 F (\theta) - \nabla^2 F_n (\theta)} + \opnorm{\nabla^2 \log \prior (\theta) / n}\\
    &\leq \holderconst \vecnorm{\theta - \thetastar}{2} + \noiseone^{(2)} (n, \delta) \vecnorm{\theta - \thetastar}{2} + \noisetwo^{(2)} (n, \delta) + \frac{\smoothprior}{n}.
\end{align*}

Substituting into the bound for $I_1 (t)$, for any $a > 0$, we have that
\begin{align*}
    I_1 (t) & \leq \int_0^t \opnorm{(\HessianStar)^{1/2} e^{\HessianStar (s - t) / 2}} \\
    & \hspace{2 em} \times \vecnorm{\HessianStar  - \nabla^2 F_n (\theta_s) 
    + \nabla^2 \log \prior (\theta_s) / n }{2} \vecnorm{\theta_s - \thetamap}{2} \sqrt{\lyap_s (\theta_s)} ds \nonumber\\
   & \leq a^{-1} \sup_{0 \leq s \leq t} \lyap_s (\theta_s) \cdot \int_0^t \opnorm{(\HessianStar)^{1/2} e^{\HessianStar (s - T) / 4}}^2 ds \nonumber\\
   & \hspace{2 em} + a \int_0^t \opnorm{\HessianStar  - \nabla^2 F_n (\theta_s)^2 + \nabla^2 \log \prior (\theta_s) / n }^2 \cdot \vecnorm{\theta_s - \thetamap}{2}^2 \opnorm{e^{\HessianStar (s - T) / 4}}^2 ds \nonumber\\
   &\leq \frac{2 + \log \kappa (\HessianStar)}{a} \sup_{0 \leq s \leq t} \lyap_s (\theta_s) 
   \\
   & \hspace{2 em} + a \int_0^t \Delta_s^2 \left( \vecnorm{\theta_s - \thetastar}{2}^2 + \vecnorm{\thetamap - \thetastar}{2}^2 \right) e^{- \frac{\lambda_{\min} (\HessianStar)}{2} (s - T)} ds.
\end{align*}
Therefore, claim~\eqref{eq:I1-bound-in-non-asymp-credible-set} follows.
  
\subsubsection{Proof of claim~\eqref{eq:I2-bound-in-non-asymp-credible-set}}

Note that $I_2(t)$ is a martingale with respect to the Brownian
filtration. Applying the Burkholder-Gundy-Davis inequality for an
arbitrary $p \geq 2$ yields
\begin{align*}
\left( \Exs \sup_{0 \leq t \leq T} |I_2 (t)|^p \right)^{1/p} & \leq
\unicon \sqrt{\frac{p}{n}} \left( \Exs \left(\int_0^T
\vecnorm{\HessianStar e^{(t - T) \HessianStar} (\theta_t -
  \thetamap)}{2}^2 dt \right)^{\frac{p}{2}} \right)^{1/p}\\ &\leq C
\sqrt{\frac{p}{n}} \left( \Exs \left(\int_0^T
\opnorm{(\HessianStar)^{1/2} e^{\frac{t - T}{2} \HessianStar}}^2
\lyap_t (\theta_t) dt \right)^{\frac{p}{2}} \right)^{1/p} \\
& \leq \unicon \sqrt{\frac{p}{n}} \left( \Exs \sup_{0\leq t \leq T}
\lyap_t (\theta_t)^{p/2} \right)^{1/p} \cdot \sqrt{\int_0^T
  \opnorm{(\HessianStar)^{1/2} e^{\frac{t - T}{2} \HessianStar}}^2
  dt}.
\end{align*}

We now observe that
\begin{align*}
\opnorm{(\HessianStar)^{1/2} e^{\frac{t - T}{2} \HessianStar}}^2 =
\opnorm{\HessianStar e^{(t - T) \HessianStar}} = \max_{i \in [d]}
\left( \lambda_i (\HessianStar) e^{(t - T) \lambda_i (\HessianStar)}
\right).
\end{align*}
Taking the time integral leads to the bound
\begin{align*}
\int_0^T \opnorm{(\HessianStar)^{1/2} e^{\frac{t - T}{2}
    \HessianStar}}^2 dt & \leq \int_0^{+ \infty} \max_{i \in [d]}
\left( \lambda_i (\HessianStar) e^{- t \lambda_i (\HessianStar)}
\right) dt \\
& \leq \underbrace{\int_0^{+ \infty} \max_{\lambda_{\min}
    (\HessianStar) \leq \lambda \leq \lambda_{\max} (\HessianStar)}
  \left( \lambda e^{- t \lambda} \right) dt.}_{ = \, : J}
\end{align*}
We now split the integral $J$ into three parts, thereby obtaining
\begin{align}
J & \leq \int_0^{\lambda_{\max} (\HessianStar)^{-1}} \lambda_{\max}
(\HessianStar) e^{- t \lambda_{\max} (\HessianStar) } dt \nonumber \\
& \hspace{10 em} +
\int_{\lambda_{\max} (\HessianStar)^{-1}}^{{\lambda_{\min}
    (\HessianStar)^{-1}}} \frac{dt}{e t} + \int_{{\lambda_{\min}
    (\HessianStar)^{-1}}}^{+ \infty} \lambda_{\min} (\HessianStar)
e^{- t \lambda_{\min} (\HessianStar) } dt \nonumber \\
& \leq 1 + \frac{1}{e} \log \frac{\lambda_{\max}
  (\HessianStar)}{\lambda_{\min} (\HessianStar)}. \label{eq:matrix-integral-log-bound}
\end{align}
Denote $\kappa(M) \mydefn \frac{\lambda_{\max} (M)}{\lambda_{\min}
  (M)}$ for a positive definite matrix $M$. Collecting the above
inequalities, we find that the term $I_2(t)$ is upper bounded as
\begin{align*}
\left( \Exs \sup_{0 \leq t \leq T} |I_2 (t)|^p \right)^{1/p} \leq
\unicon \sqrt{\frac{p \left(1 + \log \kappa (\HessianStar)
    \right)}{n}} \left( \Exs \sup_{0\leq t \leq T} \lyap_t
(\theta_t)^{p/2} \right)^{1/p}
\end{align*}
for a universal constant $\unicon > 0$.  This completes the proof of
the claim~\eqref{eq:I2-bound-in-non-asymp-credible-set}.


\subsubsection{Proof of claim~\eqref{eq:I3-bound-in-non-asymp-credible-set}}

Finally, the term $I_3 (t)$ is straightforward to upper bound as
\begin{align*}
I_3(t) \leq \frac{1}{n} \mathrm{Tr} \left( \HessianStar \int_0^T
e^{\HessianStar (s - T)} ds \right) \leq \frac{1}{n} \mathrm{Tr}
\left( \HessianStar \int_0^{+ \infty} e^{- s\HessianStar} ds \right) =
\frac{d}{n},
\end{align*}
which establishes the
claim~\eqref{eq:I3-bound-in-non-asymp-credible-set}.


\label{sec:append_remain_theorem}

In this Appendix, we provide proofs of remaining theorems and
propositions in the main text.

\subsection{Proof of~\cref{thm-one-point-SC}}
\label{subsec:strongly_convex_proof}

Throughout the proof, in order to simplify notation, we omit the
conditioning on the $\sigma$-field \mbox{$\mathcal{F}_n \mydefn
  \sigma(\DataX)$;} it should be taken as given.  For $\constrong =
\frac{1}{2} \strongconvex - \noiseone(n, \delta) >
\frac{\strongconvex}{6}$, we claim that
\begin{align}
\label{eq:key_claim_strong_concave}
    \frac{1}{2} e^{\constrong t} \vecnorm{\theta_t - \thetastar}{2}^2 
    \leq \frac{1}{\sqrt{n}} M_t + U_{n} \frac{(e^{\constrong t} - 1)}{ 2 \constrong },
\end{align}
where $U_n \mydefn \frac{3 \boundconsprior^2}{n^2} +
 \frac{3 \noisetwo^2 (n, \delta)}{
  \strongconvex} + \frac{d}{n}$ and $M_t \mydefn \int_0^t
e^{\constrong s} \inprod{\theta_s - \thetastar}{ dB_s}$, which is a martingale. 

Assume that the above claim is given at the moment (the proof of that
claim is deferred to the end of the proof of the proposition). In order to
bound the moments of martingale $M_t$, for any $p \geq 4$, we invoke
the Burkholder-Gundy-Davis inequality~\cite{MR1725357} to find that
\begin{align*}
  \Exs \brackets{ \sup_{0 \leq t \leq T} |M_t|^{\frac{p}{2}}} \leq (p
  C)^{ \frac{p}{4} } \mathbb{E} \brackets{ \quadvar{M}{T}^{
      \frac{p}{4}}} & = (p C)^{ \frac{p}{4}} \mathbb{E} \left(\int_0^T
  e^{2 \constrong s} \vecnorm{\theta_s - \thetastar}{2}^2 ds
  \right)^{\frac{p}{4}} \\ & \leq (p C)^{\frac{p}{4}} \mathbb{E}
  \left( \sup_{0 \leq t \leq T} e^{\constrong t} \vecnorm{\theta_t -
    \thetastar}{2}^2 \int_0^T e^{\constrong s} ds\right)^{\frac{p}{4}} \\ &
  \leq \parenth{ \frac{p C e^{\constrong T}}{\constrong}}^{\frac{p}{4}} \mathbb{E}\left(
  \sup_{0 \leq t \leq T} e^{\constrong t} \vecnorm{\theta_s - \thetastar}{2}^2
  \right)^{\frac{p}{4}},
\end{align*}
where $C$ is a universal constant. Therefore, we arrive at the following bound:
\begin{align*}
    \Exs  \brackets{ \left( \sup_{0 \leq t \leq T} e^{\constrong t} \vecnorm{\theta_t - \thetastar}{2} \right)^p }  &
    \leq \Exs \left(\frac{2}{\sqrt{n}} M_t \right)^{\frac{p}{2}} +
    \left( U_n\frac{(e^{\constrong T} - 1)}{ \constrong } \right)^{\frac{p}{2}} \\ & \leq
    \left(U_n \frac{ e^{\constrong T}}{ \constrong}\right)^{\frac{p}{2}} + \left(\frac{p
      C e^{\constrong T} }{\constrong n} \right)^{\frac{p}{4}} \mathbb{E}\left( \sup_{0
      \leq s \leq T} e^{\constrong s} \vecnorm{\theta_s - \thetastar}{2}^2
    \right)^{\frac{p}{4}}.
\end{align*}
For the right hand side of the above inequality, we can relate it to
the left hand side by using Young's inequality, which is given by
\begin{align*}
  \left(\frac{p C e^{\constrong T} }{\constrong n} \right)^{\frac{p}{4}}
        \mathbb{E} \left( \sup_{0 \leq s \leq T} e^{\constrong s}
        \vecnorm{\theta_s - \thetastar}{2}^2 \right)^{\frac{p}{4}}
        \leq \frac{1}{2}\left(\frac{p C e^{\constrong T} }{\constrong n}
        \right)^{\frac{p}{2}} + \frac{1}{2} \mathbb{E}\left( \sup_{0
          \leq s \leq T} e^{\constrong s} \vecnorm{\theta_s - \thetastar}{2}^2
        \right)^{\frac{p}{2}}.
\end{align*}
Putting the above results together, and let $\alpha = \frac{\mu}{2}$, we find that
\begin{align*}
    \left( \mathbb{E} \brackets{ \vecnorm{ \theta_T - \thetastar
      }{2}^p } \right)^{ \frac{1}{p} } \leq e^{- \constrong T} \left(
    \mathbb{E} \sup_{0 \leq t \leq T}\left( e^{\constrong t} \vecnorm{ \theta_t
      - \thetastar }{2}^p\right) \right)^{ \frac{1}{p} } \leq
    C'\left(\sqrt{\frac{U_n}{ \strongconvex}} + \sqrt{\frac{2 p }{n
        \mu }}\right),
\end{align*}
for universal constant $C' > 0$. Therefore, the
  diffusion process defined in equation~\eqref{eq-diffusion-main} satisfies the
  following inequality
  \begin{align*}
    \sup_{t \geq 0} \parenth{ \mathbb{E} \brackets{ \vecnorm{ \theta_t
          - \thetastar } {2}^p} }^{ \frac{1}{p} } \leq c \;
    \left( \sqrt{\frac{ d }{ \strongconvex n}} + \frac{\boundconsprior}{\strongconvex n} +
    \frac{\noisetwo(n, \delta)}{\strongconvex} + \sqrt{\frac{p}{n
        \strongconvex}} \right)
  \end{align*}
  for any $p \geq 1$. Combining the above inequality with the
  inequality~\eqref{eq:key_claim_weakly_concave} yields the conclusion of the
  proposition.

\vspace{0.5 em}
\noindent
\textit{Proof of claim~\eqref{eq:key_claim_strong_concave}:} For the given choice $\constrong > 0$, an application of It\^{o}'s
formula yields the decomposition
\begin{align}
  \frac{1}{2} e^{\constrong t} \vecnorm{\theta_t - \thetastar}{2}^2 = & -
  \frac{1}{2} \int_0^t \inprod{\thetastar - \theta_s}{ \nabla
    \loglihood_{n} (\theta_s) e^{\constrong s} } ds + \frac{1}{2 n} \int_0^t
  \inprod{ \theta_s - \thetastar}{\nabla \log \prior (\theta_s ) e^{\constrong
      s}} ds \notag \\ + & \frac{d}{2 n} \int_0^t e^{\constrong s} ds +
  \frac{1}{\sqrt{n}}\int_0^t e^{\constrong s} \inprod{\theta_s - \thetastar}{
    dB_s} + \frac{1}{2} \int_0^t \constrong e^{\constrong s} \vecnorm{ \theta_s -
    \thetastar }{2} ^2ds \nonumber \\ = & J_{1} + J_{2} + J_{3} +
  J_{4} + J_{5}. \label{eq-proof-geomtry-convex-ito}
\end{align}

We begin by bounding the term $J_{1}$ in
equation~\eqref{eq-proof-geomtry-convex-ito}.  Based on
Assumption~\ref{item:strong_concavity_deviation} regarding the
perturbation error between $\loglihood_{n}$ and $\loglihood$ and the
strong convexity of $\loglihood$, we have
\begin{align*}
& \hspace{- 1 em} J_{1} = - \frac{1}{2} \int_0^t \inprod{ \thetastar -
    \theta_s}{ \nabla \loglihood_{n} (\theta_s) e^{\constrong s} } ds
  \nonumber \\ \leq & - \frac{1}{2} \int_0^t \inprod{ \thetastar -
    \theta_s}{ \nabla \loglihood (\theta_s) e^{\constrong s} } ds +
  \frac{1}{2} \int_0^t \vecnorm{ \theta_s - \thetastar}{2} \vecnorm{
    \nabla \loglihood (\theta_s) - \nabla \loglihood_{n} (\theta_s)
  }{2} e^{\constrong s} ds \nonumber \\ \leq & - \frac{1}{2} \int_0^t
  \strongconvex \vecnorm{\theta_s - \thetastar}{2}^2 e^{\constrong s}
  ds + \frac{1}{2} \int_0^t \vecnorm{ \theta_s - \thetastar}{2} (
  \noiseone(n, \delta) \vecnorm{ \theta_s - \thetastar}{2} +
  \noisetwo(n, \delta)) e^{\constrong s} ds \notag\\ \leq & -
  \frac{1}{2} \int_0^t \strongconvex \vecnorm{\theta_s -
    \thetastar}{2}^2 e^{\constrong s} ds + \frac{1}{2} \int_0^t
  \vecnorm{ \theta_s - \thetastar}{2}^2 ( \noiseone(n, \delta) +
  \strongconvex/3 ) e^{\constrong s} ds + \frac{3 \noisetwo^2(n,
    \delta)}{2 \strongconvex} \int_0^t e^{\constrong s} ds.
\end{align*}
The second term $J_{2}$ involving prior $\prior$ can be controlled in
the following way:
\begin{multline*}
    J_2  = \frac{1}{2 n} \int_0^t \inprod{ \theta_s -
      \thetastar}{\nabla \log \prior (\theta_s ) e^{\constrong s}} ds \leq \frac{1}{2n} \int_0^t \boundconsprior \vecnorm{\theta_s - \thetastar}{2}  e^{\constrong s} ds\\
      \leq \int_0^t  \frac{\strongconvex}{6} \vecnorm{\theta_s - \thetastar}{2}^2 e^{\constrong s} ds + \frac{3\boundconsprior^2}{n^2 \strongconvex} \int_0^t e^{\constrong s} ds.
\end{multline*}
For the third term $J_{3}$, a direct calculation leads to
\begin{align*}
    J_3 = \frac{d (e^{\constrong t} - 1)}{ 2 \constrong n }.
\end{align*}
Moving to the fourth term $J_{4}$, it is a martingale as $J_{4} =
M_{t}/ \sqrt{n}$.  Putting the above results together, as $\constrong =
\frac{1}{2} \strongconvex - \noiseone(n, \delta) >
\frac{\strongconvex}{6}$, we obtain that
\begin{align*}
    \frac{1}{2} e^{\constrong t} \vecnorm{\theta_t - \thetastar}{2}^2 
    \leq \frac{1}{\sqrt{n}} M_t + U_{n} \frac{(e^{\constrong t} - 1)}{ 2 \constrong }.
\end{align*}
Putting together the pieces yields the
claim~\eqref{eq:key_claim_strong_concave}.

\subsection{Proof of~\cref{thm:contraction-langevin-alg}}\label{subsec:proof-contraction-langevin}

Let the pair $(\Psi, \zeta_\numobs)$ to be the functions defined in
equations~\eqref{eq:func-pair-defn-in-local-weak-convex-proof} in the
proof of~\cref{thm:local-weak-convex}. We denote $\loglihood_\numobs
\mydefn \Psi + \zeta_n + \frac{1}{n}\log \prior$, and consider the
process generated by running Langevin algorithm on the modified
posterior distribution:
\begin{align}
  \thetatil_{k + 1} = \thetatil_k + \stepsize \nabla \left(\Psi + \zeta_n
 + \frac{1}{n}\log \prior \right) (\thetatil_k) + \sqrt{\frac{2 \stepsize}{\numobs}} W_k. \label{eq:modified-ula-in-algo-proof}
\end{align}
Note that the potential function $\widetilde{\loglihood}$ is exactly the same as $\loglihood_\numobs$ within the ball $\ball (\thetastar, \localradius)$. Defining the event:
\begin{align}
  \Event_k \mydefn \Big\{ \max_{1 \leq i \leq k} \vecnorm{\theta_i - \thetastar}{2} \leq \localradius \Big\}.\label{eq:good-event-in-langevin-alg}
\end{align}
On the event $\Event_k$, the process $(\thetatil_i)_{1 \leq i \leq k}$
has the same law as $(\theta_i)_{1 \leq i \leq k}$. In the following,
we analyze the moments of the process $(\thetatil_i)_{1 \leq i \leq
  k}$. As with the proof of~\cref{thm:local-weak-convex}, condition on
the random data $(X_i)_{i = 1}^\numobs$.

Defining $\Delta_k = \thetatil_k - \thetastar$, for any integer $p \geq 1$, a direct expansion of the iterates yields:
\begin{align*}
  &\Exs \Big[ \vecnorm{\Delta_{k + 1}}{2}^{2p} \Big]\\
   &\leq \sum_{q = 0}^{p} \binom{2p}{2 q} \Big( \sqrt{\frac{2 \stepsize}{\numobs}} \Big)^{2q} \Exs [\vecnorm{ W_k }{2}^{2q}] \cdot \Exs \Big[ \vecnorm{\Delta_k + \stepsize \nabla \widetilde{\loglihood}_\numobs (\thetatil_k)}{2}^{2p - 2q}\Big]\\
   &\leq  \sum_{q = 0}^{p} \binom{p}{q} \frac{(p + 1) \cdots (2p)}{(q + 1) \cdots (2q) \cdot (p - q + 1) \cdots (2p - 2q)} \Big( \sqrt{\frac{c \stepsize (d + q)}{\numobs}} \Big)^{2q} \cdot \Exs \Big[ \vecnorm{\Delta_k + \stepsize \nabla \widetilde{\loglihood}_\numobs (\thetatil_k)}{2}^{2p - 2q}\Big]\\
   &\leq  \sum_{q = 0}^{p} \binom{p}{q} \Big( \sqrt{\frac{c \stepsize p^2 (d + p)}{\numobs}} \Big)^{2q} \cdot \Big\{ \Exs \big[ \vecnorm{\Delta_k + \stepsize \nabla \widetilde{\loglihood}_\numobs (\thetatil_k)}{2}^{2p}\big] \Big\}^{\frac{p - q}{p}}\\
  &\leq \Big(\frac{c \stepsize p^2 (d + p)}{\numobs} + \Big\{ \Exs \big[ \vecnorm{\Delta_k + \stepsize \nabla \widetilde{\loglihood}_\numobs (\thetatil_k)}{2}^{2p}\big] \Big\}^{1 / p} \Big)^p.
\end{align*}
Using the shorthand notation $\lambda_{2p} \mydefn \Big\{ \Exs \Big[ \vecnorm{\Delta_k + \stepsize \nabla \widetilde{\loglihood}_\numobs (\thetatil_k)}{2}^{2p} \Big] \Big\}^{\frac{1}{2p}}$, we conclude that:
\begin{align}
   \Exs \Big[ \vecnorm{\Delta_{k + 1}}{2}^{2p} \Big]  \leq \Big( \lambda_{2p}^{2} + \frac{c \stepsize p^2 (d + p)}{\numobs} \Big)^p. \label{eq:recursive-moment-bound-in-langevin-alg-proof}
\end{align}

By the local growth
conditions~\ref{item:without_global_geometry},~\ref{item:without_global_deviation},
and the global smoothness
assumptions~\ref{item:smooth_population_condition}
and~\ref{item:smooth_log_prior}, we note that:
\begin{align*}
&\vecnorm{\Delta_k + \stepsize \nabla \widetilde{\loglihood}_\numobs
    (\thetatil_k)}{2}^{2}\\ &= \vecnorm{\Delta_k}{2}^2 + \stepsize
  \inprod{\thetatil_k - \thetastar}{\nabla \Psi (\thetatil_k)} +
  \stepsize^2 \vecnorm{\nabla \widetilde{\loglihood}_\numobs
    (\thetatil_k)}{2}^2\\ &\leq \vecnorm{\Delta_k}{2}^2 +
  \inprod{\Delta_k}{\Psi (\thetatil_k)} + \stepsize
  \vecnorm{\Delta_k}{2} \cdot \big( \vecnorm{\nabla \zeta_\numobs
    (\thetatil_k)}{2} + \numobs^{-1} \vecnorm{\nabla \log \prior
    (\thetatil_k)}{2} \big) + \stepsize^2 \smoothness^2
  \vecnorm{\Delta_k}{2}^2\\ &\leq \left(1 - \stepsize
  \lowerweakconcavemu + \stepsize^2 \smoothness^2 \right)
  \vecnorm{\Delta_k}{2}^2 + \stepsize \big( \varepsilon (\numobs,
  \delta) + \frac{\boundconsprior}{\numobs} \big)
  \vecnorm{\Delta_k}{2}\\ &\leq \left(1 - 2 \stepsize
  \lowerweakconcavemu / 3 + \stepsize^2 \smoothness^2 \right)
  \vecnorm{\Delta_k}{2}^2 + \frac{3 \stepsize}{\strongconvex} \Big(
  \varepsilon (\numobs, \delta) + \frac{\boundconsprior}{\numobs}
  \Big)^2.
\end{align*}
Given the stepsize $\stepsize <
\frac{\lowerweakconcavemu}{3\smoothness^2}$, we have that:
\begin{align*}
  \lambda_{2p}^{2p} &= \Exs \big[ \vecnorm{\Delta_k + \stepsize \nabla
      \widetilde{\loglihood}_\numobs (\thetatil_k)}{2}^{2p} \big]
  \\ &\leq \Exs \Big\{ (1 - \stepsize \lowerweakconcavemu / 3 )
  \vecnorm{\Delta_k}{2}^2 + \frac{3 \stepsize}{\strongconvex} \Big(
  \varepsilon (\numobs, \delta) + \frac{\boundconsprior}{\numobs}
  \Big)^2 \Big\}^{p} \\ &\leq \Big\{ ( 1 - \lowerweakconcavemu
  \stepsize / 3) \big( \Exs [\vecnorm{\Delta_k}{2}^{2p}]
  \big)^{\frac{1}{p}} + \frac{3 \stepsize}{\strongconvex} \Big(
  \varepsilon (\numobs, \delta) + \frac{\boundconsprior}{\numobs}
  \Big)^2 \Big\}^p.
\end{align*}
Combining with the bound~\eqref{eq:recursive-moment-bound-in-langevin-alg-proof}, we conclude that:
\begin{align*}
   \Big\{ \Exs \big[ \vecnorm{\Delta_{k + 1}}{2}^{2p} \big]
   \Big\}^{1/p} \leq ( 1 - \lowerweakconcavemu \stepsize / 3) \big(
   \Exs [\vecnorm{\Delta_k}{2}^{2p}] \big)^{\frac{1}{p}} + \frac{3
     \stepsize}{\strongconvex} \Big( \varepsilon (\numobs, \delta) +
   \frac{\boundconsprior}{\numobs} \Big)^2 + \frac{c \stepsize p^2 (d
     + p)}{\numobs}.
\end{align*}
Solving this recursion, we arrive at the following bound for $k =
0,1,2, \cdots$
\begin{align}
  \Big\{ \Exs \big[ \vecnorm{\Delta_{k}}{2}^{2p} \big] \Big\}^{1/p}
  \leq e^{- k \strongconvex \stepsize / 3} \vecnorm{\Delta_0}{2}^2 +
  \frac{9}{\strongconvex^2} \Big( \varepsilon (\numobs, \delta) +
  \frac{\boundconsprior}{\numobs} \Big)^2 + \frac{3 c p^2 (d +
    p)}{\strongconvex
    \numobs} \label{eq:moment-bound-for-modified-iterate-in-langevin-alg}
\end{align}
By
equation~\eqref{eq:moment-bound-for-modified-iterate-in-langevin-alg}
and a union bound over $k = 0,1,2,\cdots, T$, with probability $1 -
\vartheta$, we have that:
\begin{align*}
  \max_{0 \leq k \leq T} \vecnorm{\Delta_{k}}{2} \leq
  \vecnorm{\Delta_0}{2} + \frac{3 \varepsilon (\numobs,
    \delta)}{\strongconvex} + \frac{3 \boundconsprior}{\numobs
    \strongconvex} + \sqrt{\frac{3c d}{\numobs \strongconvex} \log^3
    \frac{T}{\vartheta}}.
\end{align*}
Under the condition $\vecnorm{\theta_0 - \thetastar}{2} \leq
\localradius / 2$ and the sample size
condition~\eqref{eq:sample-size-req-in-langevin-alg}, we have the
uniform bound:
\begin{align*}
  \Prob \big( \Event_T \big) \geq 1 - \delta / 2,
\end{align*}
Consequently, on the event $\Event_T$, we conclude the following
moment bound on the last iterate of the Langevin algorithm:
\begin{align*}
   \Big\{ \Exs \big[ \vecnorm{\Delta_{T}}{2}^{2p} \cdot
     \bm{1}_{\Event_T} \big] \Big\}^{1/p} \leq e^{- T \strongconvex
     \stepsize / 3} \vecnorm{\Delta_0}{2}^2 +
   \frac{9}{\strongconvex^2} \Big( \varepsilon (\numobs, \delta) +
   \frac{\boundconsprior}{\numobs} \Big)^2 + \frac{3 c p^2 (d +
     p)}{\strongconvex \numobs},
\end{align*}
which can be readily converted into the following bound with
probability $1 - \delta$:
\begin{align*}
  \vecnorm{\Delta_{T}}{2} \leq e^{- \frac{T \strongconvex
      \stepsize}{12 \log (1 / \delta)} } \vecnorm{\Delta_0}{2} + c
  \Big\{ \frac{ \varepsilon (\numobs, \delta)}{\strongconvex} +
  \frac{\boundconsprior}{\strongconvex \numobs} + \log (1 / \delta)
  \cdot \sqrt{\frac{\usedim + \log (1 / \delta)}{\strongconvex
      \numobs}} \Big\}.
\end{align*}

\subsection{Proof of~\cref{theorem-main-weakly-convex}}
\label{subsection:weakly_convex_proof}

As in the proof of~\cref{thm-one-point-SC}, we omit the conditioning
on $\mathcal{F}_n \mydefn \sigma(\DataX)$.  For any $p \geq 2$, we
define the functions on the positive real line $(0, \infty)$
\begin{align*}
\nu_{(p)}(r) \mydefn \psi \left( r^{ \frac{1}{p - 1} } \right)
r^{\frac{p - 2}{p - 1}}, \quad \mbox{and} \quad \tau_{(p)} \big(r^{p -
  1} \perturb(r) \big ) \mydefn r^{p - 2} \weakcon(r).
\end{align*}
By Assumption~\ref{item:weak_concavity_deviation}, the function $r
\mapsto r^{p - 1} \perturb (r)$ is strictly increasing and surjective
function that maps from $[0, +\infty)$ to $[0, +\infty)$. Therefore,
    it is invertible and the function $\tau_{(p)}^{-1}$ is
    well-defined.

Now we claim that for any $p \geq 2$, the functions $\nu_{(p)}$ and
$\tau_{(p)}$ are convex and strictly increasing, and that furthermore,
the expectation $\Exs \brackets{ \vecnorm{\theta_t -
    \thetastar}{2}^p}$ is upper bounded by the integral
\begin{align} 
 \frac{p}{2} \int_0^t \biggr( - R_p(s) + \noise(n, \delta)
 \tau_{(p)}^{- 1} (R_p(s)) & + \frac{\boundconsprior}{n} \nu_{(p)}^{
   -1} (R_p(s)) + \frac{p - 1 + d }{n} \nu_{(p)}^{ -1}
 (R_p(s))^{\frac{p - 2}{p - 1}} \biggr)
 ds, \label{eq:key_claim_weakly_concave}
\end{align}
where $R_p(s) \mydefn \Exs \brackets{ \vecnorm{\theta_s -
    \thetastar}{2}^{p - 2} \weakcon (\vecnorm{\theta_s -
    \thetastar}{2})}$.

Taking the above claims as given for the moment, let us now complete
the proof of the theorem. Since for each finite $q \geq 1$, the
process $(\theta_t: t\geq 0)$ converges in $\mathbb{L}^q$ norm, the
limit $\lim_{t \rightarrow +\infty} R_p(t)$ exists. Since the
functions $\tau_{(p)}$ and $\nu_{(p)}$ are convex and strictly
increasing, their inverse functions are concave.  Moreover, simple
calculation leads to
\begin{align}
    \nabla_{r}\parenth{ \nu_{(p)}^{-1} (r)^{\frac{ p - 2}{ p - 1}}} =
    \frac{p - 2}{p - 1} \cdot \frac{ \nu_{(p)}^{-1} (r)^{ - \frac{1}{p - 1}}
    }{\nu_{(p)}' (\nu_{(p)}^{-1} (r))}. \label{eq:deriv_equ}
\end{align}
Since $\nu_{(p)}$ is convex and increasing, the numerator is a
decreasing positive function of $r$. Additionally, the denominator is
an increasing positive function of $r$.  Therefore, the derivative in
equation~\eqref{eq:deriv_equ} is a decreasing function of $r$, and the
function $r \mapsto \nu_{(p)}^{-1} (r)^{\frac{ p - 2}{ p - 1}}$ is
concave. Define the function
\begin{align*}
    \phi(r) & \mydefn - r + \noise(n, \delta) \tau_{(p)}^{- 1} (r) +
    \frac{\boundconsprior}{\numobs} \nu_{(p)}^{ -1} (r) + \frac{p - 1
      + d}{n} \nu_{(p)}^{ -1} (r)^{\frac{p - 2}{p - 1}},
\end{align*}
and observe that $\phi$ is concave and $\phi(0) = 0$.  Let $r_*$ be
the smallest positive solution to the equation
\begin{align*}
    r = \noise(n, \delta) \tau_{(p)}^{- 1} (r) +
    \frac{\boundconsprior}{\numobs} \nu_{(p)}^{ -1} (r) + \frac{p - 1
      + d}{n} \nu_{(p)}^{ -1} (r)^{\frac{p - 2}{p - 1}}.
\end{align*}
We then have $\phi(r) < 0$ for $r > r_*$ and $\phi(r) > 0$ for $r \in
(0, r_*)$.  By~\cref{lemma-integral-ineq-limit-control}, we have
$\lim_{t \rightarrow +\infty} R_p(t) \leq r_*$.

Since $\nu_{(p)}$ is a convex and strictly increasing function,
Jensen's inequality implies that
\begin{align}
 R_p(t) = \mathbb{E} \left( \vecnorm{\theta_t - \thetastar}{2}^{p - 2}
 \psi(\vecnorm{\theta_t - \thetastar}{2}) \right) \geq \nu_{(p)}
 \left( \Exs \vecnorm{\theta_t - \thetastar}{2}^{p -
   1}\right). \label{eq:Rp-bound-by-power-p-minus-1}
\end{align}
Therefore, if we define $z_* \mydefn \lim_{t \rightarrow +\infty}
\left(\Exs \vecnorm{ \theta_t - \thetastar}{2}^{p - 1}
\right)^{\frac{1}{p - 1}}$, we have $z_*^{p - 1} \leq
\nu_{(p)}^{-1}(r_*)$.  Hence, we arrive at the following inequality
\begin{align*}
    z_*^{p - 2} \psi(z_*) & \leq \noise(n, \delta) \tau_{(p)}^{- 1}
    \left( \nu_{(p)} (z_*^{p - 1}) \right) + \frac{\boundconsprior}{n}
    z_*^{p - 1} + \frac{p - 1 + d }{n} z_*^{p - 2}\\ & = \noise(n,
    \delta) z_*^{p - 1} \zeta (z_*) + \frac{\boundconsprior}{n} z_*^{p
      - 1} + \frac{p - 1 + d }{n} z_*^{p - 2}.
\end{align*}
As a consequence, we find that
\begin{align*}
     \psi(z_*) \leq \noise(n, \delta) \zeta (z_*) z_* +
     \frac{\boundconsprior + (p - 1) d }{ n}.
\end{align*}
Now, we claim that there exists a unique positive solution to
equation~\eqref{eq:key_equation}. Given this claim, replacing $p$ by
$(p + 1)$ and putting the above results together yields
\begin{align*}
  \lim_{t \rightarrow +\infty} \left( \mathbb{E} \left ( \vecnorm{
    \theta_t - \thetastar }{2}^p\right) \right)^{ \frac{1}{p} } \leq
  z_p^*,
\end{align*}
where $z_p^*$ is the unique positive solution to the following
equation:
\begin{align*}
  \psi (z) = \noise(n, \delta) \zeta (z) z +
  \frac{\boundconsprior}{\numobs} z + \frac{p + d }{ n}.
\end{align*}
Combining the above inequality with the inequality~\eqref{eq:key_claim_weakly_concave}
yields the conclusion of the theorem. \\

\noindent We now return to prove our earlier claims about the behavior
of the functions $\nu_{(p)}$, $\tau_{(p)}$, the moment
bound~\eqref{eq:key_claim_weakly_concave}, and the existence of unique
positive solution to equation~\eqref{eq:key_equation}.

\subsubsection{Structure of the function $\nu_{(p)}$}

Since $\weakcon$ is a convex and strictly increasing function, by
taking the second derivative, we find that
\begin{align*}
\nu_{(p)}''(r) & = \nabla_{r}^2 \left( \weakcon \parenth{
  r^{\frac{1}{p - 1}} } r^{\frac{p - 2}{p - 1}} \right) \\ & =
\frac{1}{p - 1} r^{\frac{1}{p - 1} - 1} \weakcon'' \parenth{
  r^{\frac{1}{p - 1}} } + \frac{1}{p - 1} r^{-1} \left( \weakcon'
\parenth{ r^{\frac{1}{p - 1} } } - r^{ - \frac{1}{p - 1} } \weakcon
\parenth{ r^{\frac{1}{p - 1} } } \right) \geq 0
\end{align*}
for all $r > 0$. As a consequence, the function $\nu_{(p)}$ is convex.


\subsubsection{Structure of the function $\tau_{(p)}$}

This proof exploits
Assumption~\ref{item:weak_concavity_growth_conditions} on the
functions $\weakcon$ and $\perturb$.  For any $p \geq 2$, we denote
$\perturb_{(p)}: r \rightarrow r^{p - 1} \perturb (r)$ and
$\weakcon_{(p)}: r \rightarrow r^{p - 2} \weakcon (r) $ two strictly
increasing functions.  Therefore, we can define a function $\tau_{(p)}
\mydefn \weakcon_{(p)} \circ \perturb_{(p) }^{ -1 }$, namely, $
\tau_{(p)} (r^{p - 1} \perturb(r) ) = r^{p - 2} \weakcon(r)$, for any
$r > 0$.  Following some calculation, we find that
\begin{align*}
  \nabla_{r} \parenth{ \tau_{(p)} ( r^{p - 1} \perturb (r) )} & =
  \brackets{ ( p -1 ) r^{ p - 2 } \perturb (r) + r^{p - 1 }
    \perturb'(r) } \tau_{(p)}' (r^{p - 1} \perturb (r) ) \\ & = (p -
  2) r^{p - 3} \weakcon (r) + r^{p - 2} \weakcon' (r).
\end{align*}
Setting $z = \perturb_{(p)} (r)$ leads to
\begin{align*}
  \nabla_{z} \tau_{(p)} (z) = \frac{(p - 2) \weakcon (r) + r
    \weakcon'(r) } { (p - 1) r \perturb (r) + r^2 \perturb' (r) }.
\end{align*}
Taking another derivative of the above term, we find that
\begin{align*}
    \nabla_{z}^2 \tau_{(p)} (z) = & \left( \perturb_{(p)}'(r)
    \right)^{-1} \frac{ g(r, p)}{ \left( (p - 1) r \perturb (r) + r^2
      \perturb' (r) \right)^2 },
\end{align*}
where we denote
\begin{align*}
 g(r, p) \mydefn \brackets{ (p - 1) r \perturb(r) + r^2 \perturb'(r)
 } \cdot & \brackets{ (p - 1) \weakcon'(r) + r \weakcon''(r) } \\
& \hspace{- 4 em} - \brackets{ (p - 1) \perturb(r) + (p + 1) r
   \perturb'(r) + r^2 \perturb''(r) } \cdot \brackets{ (p - 2)
   \weakcon(r) + r \weakcon'(r)}.
\end{align*}
According to Assumption~\ref{item:weak_concavity_growth_conditions},
the function $\tau_{(2)} = \weakcon_{(2)} \circ \perturb_{(2)}^{-1}$
is convex. Therefore, we have $g(r, 2) \geq 0$ for any $r > 0$. Simple
algebra with first order derivative of function $g$ with respect to
parameter $p$ leads to
\begin{align*}
  \nabla_{p} \parenth{ g(r, p )} = & \perturb (r) \cdot \brackets{ (p
    - 1) r \weakcon'(r) + r^2 \weakcon''(r) - (p - 2)\weakcon(r) - r
    \weakcon' (r) } \\
- & r \perturb'(r) \brackets{ (p - 2) \weakcon(r) + r \weakcon'(r) } +
r \weakcon'(r) \cdot \brackets{ (p - 1) \perturb(r) + r \perturb'(r) }
\\
- & \weakcon(r) \cdot \brackets{ (p - 1) \perturb(r) + (p + 1) r
  \perturb' (r) + r^2 \perturb'' (r) }\\ = & 2 (p - 2) \brackets{ r
  \weakcon'(r) \perturb (r) - \weakcon (r) \perturb(r) - r
  \perturb'(r) \weakcon(r) } \\
+ & \brackets{ r^2 \perturb(r) \weakcon''(r) + r \weakcon'(r)
  \perturb(r) - 3 \weakcon(r) \perturb(r) - r^2 \weakcon(r)
  \perturb''(r) } \geq 0
\end{align*}
for all $r > 0$. Here the last inequality follows from
Assumption~\ref{item:weak_concavity_growth_conditions}.  Therefore,
the function $g$ is increasing function in terms of $p$ when $p \geq
2$, so that $g(r, p) \geq g(r, 2) \geq 0$ for all $r > 0$. Given this
inequality, we have $\frac{d^2}{dz^2} \tau_{(p)} (z) \geq 0$ for any
$z \geq 0$, $p \geq 2$, i.e., the function $\tau_{(p)}(z)$ is a convex
function for $z = \perturb_{(p)}(r)$.


\subsubsection{Proof of claim~\eqref{eq:key_claim_weakly_concave}}

For any $p \geq 2$, an application of It\^{o}'s formula yields the
bound $\vecnorm{ \theta_t - \thetastar }{2}^p \leq \sum_{j=1}^5
\Term_j$, where
\begin{subequations}
  \begin{align}    
\label{EqnTerm1}
  \Term_1 & \defn - \frac{p}{2} \int_0^t \inprod{\thetastar -
    \theta_s} {\nabla \loglihood(\theta_s)} \vecnorm{\theta_s -
    \thetastar}{2}^{p - 2} ds, \\
\label{EqnTerm2}
\Term_2 & \defn \frac{p}{2} \int_0^t \inprod{\thetastar - \theta_s}
     {\nabla \loglihood(\theta_s) - \nabla \loglihood_n (\theta_s)}
     \vecnorm{\theta_s - \thetastar}{2}^{p - 2} ds \\
\label{EqnTerm3}
\Term_3 & \defn \frac{p}{2n} \int_0^t \inprod{\theta_s - \thetastar}
     {\nabla \log \pi( \theta_s)} \vecnorm{\theta_s -
       \thetastar}{2}^{p - 2} ds \\
\label{EqnTerm4}
\Term_4 & \defn p \int_0^t \vecnorm{\theta_s - \thetastar}{2}^{p - 2}
\inprod{\theta_s - \thetastar}{d B_s} \\
\label{EqnTerm5}
\Term_5 & \defn \frac{p (p - 1 + d)}{2 n} \int_0^t \vecnorm{\theta_s -
  \thetastar}{2}^{p - 2} ds.
  \end{align}
\end{subequations}
We now upper bound the terms $\{T_j\}_{j=1}^5$ in terms of functionals
of the quantity $R_p$.  From the weak convexity of $\loglihood$
guaranteed by Assumption W.1, we have
\begin{subequations}
\begin{align}
\Exs \brackets{ T_1} & = - \frac{p}{2} \Exs \brackets{ \int_0^t
  \inprod{\thetastar - \theta_s} {\nabla \loglihood(\theta_s)}
  \vecnorm{\theta_s - \thetastar}{2}^{p - 2} ds} \leq - \frac{p}{2}
\int_0^t R_p (s) ds. \label{eq:T1_control}
\end{align}
Based on Assumption~\ref{item:weak_concavity_deviation}, we find that
\begin{align*}
    \Exs \brackets{ T_{2}} = \frac{p}{2} \Exs \brackets{ \int_0^t
      \inprod{\thetastar - \theta_s}{\nabla \loglihood(\theta_s) -
        \nabla \loglihood_n (\theta_s)} \vecnorm{\theta_s -
        \thetastar}{2}^{p - 2} ds} & \\
    & \hspace{- 9 em} \leq \frac{p}{2} \noise(n, \delta) \int_0^t
    \mathbb{E} \brackets{ \vecnorm{\theta_s - \thetastar}{2}^{p - 1}
      \perturb (\vecnorm{\theta_s - \thetastar}{2} )} ds.
\end{align*}
Since the function $\tau_{(p)}$ is convex, invoking Jensen's
inequality, we obtain the following inequalities:
\begin{align*}
  \int_0^t \mathbb{E} \brackets{ \vecnorm{\theta_s - \thetastar}{2}^{p
      - 1} \perturb \parenth{ \vecnorm{\theta_s - \thetastar}{2} }} ds
  & \leq \int_0^t \tau_{(p)}^{- 1} \mathbb{E} \brackets{ \tau_{(p)}
    \left( \vecnorm{\theta_s - \thetastar}{2}^{p - 1} \perturb
    (\vecnorm{\theta_s - \thetastar}{2}) \right)} ds \\
& = \int_0^t \tau_{(p)}^{ - 1} \parenth{ R_{p}(s)} ds.
\end{align*}
In light of the above inequalities, we have
\begin{align}
\Exs \brackets{ T_{2}} \leq \frac{p}{2} \noise(n, \delta) \int_0^t
\tau_{(p)}^{-1} \parenth{ R_{p}(s)} ds. \label{eq:T2_control}
\end{align}
Moving to $T_{3}$ in equation~\eqref{EqnTerm3}, given
Assumption~\ref{item:smooth_log_prior} which controls the growth of
prior distribution $\prior$, its expectation is bounded as
\begin{align}
  \Exs \brackets{ T_{3}} & = \frac{p}{2 n} \mathbb{E} \brackets{
    \int_0^t \inprod{\theta_s - \thetastar}{\nabla \log \prior(
      \theta_s)} \vecnorm{\theta_s - \thetastar}{2}^{p - 2} ds}
  \nonumber \\
\label{eq:bound_T3_first}  
& \leq \frac{p \boundconsprior }{2 n} \int_0^t \mathbb{E}
\brackets{\vecnorm{ \theta_s - \thetastar}{2}^{p - 1}} ds.
\end{align}

By exploiting the bound~\eqref{eq:Rp-bound-by-power-p-minus-1} along
with the fact that $\nu_{(p)}$ is strictly increasing on $[0, +
  \infty)$, we find that
\begin{align}
\label{eq:moment_control_second}    
\int_0^t \mathbb{E} \left(\vecnorm{\theta_s - \thetastar}{2}^{p - 1}
\right) ds \leq \int_0^t \nu_{(p)}^{-1} \left( R_{p}(s) \right) ds.
\end{align}
Combining the inequalities~\eqref{eq:bound_T3_first}
and~\eqref{eq:moment_control_second}, we have
\begin{align}
    \Exs \brackets{ T_{3}} \leq \frac{p \boundconsprior }{2 n}
    \int_0^t \nu_{(p)}^{-1} \left( R_{p}(s) \right)
    ds. \label{eq:bound_T3}
\end{align}
Moving to the fourth term $T_{4}$ from equation~\eqref{EqnTerm4}, we
have
\begin{align}
\label{eq:bound_T4}  
\mathbb{E} \brackets{ T_{4}} = \mathbb{E} \brackets{ \int_0^t
  \vecnorm{\theta_s - \thetastar}{2}^{p - 2} \inprod{\theta_s -
    \thetastar} {d B_s}} = 0,
\end{align}
where we have used the martingale structure.

For the last term $T_5$, invoking H\"{o}lder's inequality and the
bound~\eqref{eq:Rp-bound-by-power-p-minus-1}, we have the moment
estimate:
\begin{align*}
   \mathbb{E} \left(\vecnorm{\theta_s - \thetastar}{2}^{p - 2} \right)
   \leq \left( \Exs \left[\vecnorm{\theta_s - \thetastar}{2}^{p - 1}
     \right] \right)^{\frac{p - 2}{p - 1}} \leq \nu_{(p)}^{-1} \left(
   R_{p}(s) \right)^{\frac{p - 2}{p - 1}}.
\end{align*}
Consequently, the term $T_5$ can be bounded in expectation as
\begin{align}
\label{eq:bound_T5}  
\Exs \brackets{ T_{5}} \leq \frac{p (p - 1 + d)}{2 n} \int_0^t
\nu_{(p)}^{-1} \left( R_{p}(s) \right)^{ \frac{p - 2}{p - 1}} ds.
\end{align}
\end{subequations}
Collecting the bounds on the expectations of the terms
$\{\Term_j\}_{j=1}^5$ from
equations~\eqref{eq:T1_control}-\eqref{eq:bound_T5}, respectively, yields the
claim~\eqref{eq:key_claim_weakly_concave}.

\subsubsection{Unique positive solution to equation~\eqref{eq:key_equation}}

We now establish that equation~\eqref{eq:key_equation} has a unique
positive solution under the stated assumptions.  Define the function
\begin{align*}
\vartheta(z) & \mydefn \psi (z) - \left( \noise(n, \delta) \zeta(z) z
+ \frac{\boundconsprior + d \log(1/ \delta) }{ n} \right).
\end{align*}
Since $\psi(0) = 0$, we have $\vartheta (0) < 0$. On the other hand,
based on Assumption~\ref{item:weak_concavity_tail}, $\lim \inf_{z \to
  + \infty} \vartheta (z) > 0$. Therefore, there exists a positive
solution to the equation $\vartheta(z) = 0$.

Recall that $\inver: \real_+ \rightarrow \real$ is an inverse function
of the strictly increasing function $z \mapsto z \perturb
(z)$. Therefore, we can write the function $\vartheta$ as follows:
\begin{align*}
\vartheta (z) = \widetilde{\vartheta}r) & \mydefn \psi (\xi(r)) -
\noise(n, \delta) r - \frac{\boundconsprior + d \log(1/ \delta)}{n},
\end{align*}
where $r = z \cdot \zeta (z)$.  Given the convexity of function $r
\mapsto \weakcon( \inver (r))$ guaranteed by
Assumption~\ref{item:weak_concavity_growth_conditions}, the functions
$\widetilde{\vartheta}$ and $\vartheta$ are convex.  Putting the above
results together, there exists a unique positive solution to
equation~\eqref{eq:key_equation}.


\subsection{Proof of~\cref{thm-non-asymp-bvm}}  
\label{subsec:proof:thm-non-asymp-bvm}

We introduce the shorthand \mbox{$\targetdensity \mydefn \mathcal{N}
  \big(\thetamap, (n \HessianStar)^{-1} \big)$} for the target
density. Since $\HessianStar \succ 0$, the Gaussian log-Sobolev
inequality implies that
\begin{align}
\label{eq:log-sobolev-in-bvm-proof}  
\kull{\posterior (\cdot \mid X_1^n)}{\targetdensity} \leq
\frac{1}{n \lambda_{\min} (\HessianStar)} \int_{\real^d}
\vecnorm{\nabla \log \posterior (\theta \mid X_1^n) - \nabla \log
  \targetdensity (\theta)}{2}^2 ~\posterior (d \theta \mid X_1^n).
\end{align}
Since $\targetdensity$ is a Gaussian density, we find that
\begin{align*}
    \nabla \log \targetdensity (\theta) = - n \HessianStar (\theta -
    \thetamap).
\end{align*}
For the posterior density $\posterior (\cdot \mid X_1^n)$, we note
that
\begin{align*}
\nabla \log \posterior (\theta | X_1^n) & = - n \nabla F_n (\theta) +
\nabla \log \prior (\theta) \\ &= \int_0^1 \left( - n \nabla^2 F_n (
\gamma \theta + (1 - \gamma) \thetamap) + \nabla^2 \log \prior (\gamma
\theta + (1 - \gamma) \thetamap) \right) \\ & \hspace{23 em} \times
(\theta - \thetamap) d \gamma.
\end{align*}
Putting together the above equations together yields
\begin{multline*}
    \vecnorm{\nabla \log \posterior (\theta \mid X_1^n) - \nabla \log
      \targetdensity (\theta)}{2} \\
\leq n \int_0^1 \opnorm{\nabla^2 F_n ( \gamma \theta + (1 - \gamma)
  \thetamap) - \HessianStar + \nabla^2 \log \prior (\gamma \theta + (1
  - \gamma) \thetamap) / n} \cdot \vecnorm{\theta - \thetamap}{2} d
\gamma.
\end{multline*}
By Assumptions~\ref{item:Bvm_population},~\ref{item:Bvm_deviation},
and~\ref{item:smooth_population_condition}, we have the bounds
\begin{align*}
& \opnorm{\nabla^2 F_n ( \gamma \theta + (1 - \gamma) \thetamap) +
    \nabla^2 \log \prior (\gamma \theta + (1 - \gamma) \thetamap) / n
    - \HessianStar} \\
& \leq \opnorm{\nabla^2 F ( \gamma \theta + (1 - \gamma) \thetamap) -
    \HessianStar} \\
& \hspace{8 em} + \opnorm{\nabla^2 F_n ( \gamma \theta + (1 - \gamma)
    \thetamap) - \nabla^2 F_n ( \gamma \theta + (1 - \gamma)
    \thetamap)} + \frac{\smoothprior}{n} \\
& \leq \holderconst \vecnorm{\gamma \theta + (1 - \gamma) \thetamap -
    \thetastar}{2} + \noiseone^{(2)} (n, \delta )
  \vecnorm{\theta - \thetamap}{2} + \noisetwo^{(2)} (n, \delta) +
  \frac{\smoothprior}{n}.
\end{align*}
Substituting this bound into the
bound~\eqref{eq:log-sobolev-in-bvm-proof} yields
\begin{align*}
    \kull{\posterior (\cdot \mid X_1^n)}{\targetdensity} & \leq
    \frac{n}{\lambda_{\min} (\HessianStar)} \left(\holderconst \cdot
    \Exs_\posterior \left[ \vecnorm{\theta - \thetastar}{2}^4 \mid X_1^n \right] + \holderconst
    \vecnorm{\thetamap - \thetastar}{2}^4 \right)
    \\ & + \frac{n \noiseone^{(2)} (n, \delta )}{\lambda_{\min}
      (\HessianStar)} \Exs_\posterior \left[ \vecnorm{\theta -
        \thetamap}{2}^3 \mid X_1^n \right] \\ & + \big(
    \noisetwo^{(2)} (n, \delta) + \smoothprior / n \big) \cdot \Exs
    \left[ \vecnorm{\theta - \thetamap}{2}^2 \mid X_1^n \right] .
\end{align*}
As a consequence, we obtain the conclusion of the proposition.

\section{Proofs of corollaries}
\label{sec:append_corollary_prof}
In this appendix, we collect the proofs of several corollaries stated
in the main text and~\cref{sec:examples}.  To summarize, we make use
of
Theorems~\ref{theorem-main-weakly-convex},~\ref{thm:local-weak-convex},
and~\ref{thm:non-asymp-credible-set} to establish the posterior
contraction rates of parameters and non-asymptotic Bernstein-von Mises
theorem in the examples in~\cref{sec:examples}. The crux of the proofs
of these corollaries involves a verification of assumptions to invoke
the respective theorems.  Note that the values of universal constants
may change from line-to-line.


\subsection{Proof of~\cref{cor:logit_regres}}
\label{subsec:cor:logit_regres}

We begin by verifying claim~\eqref{eq:weak_conv_logit} about the
structure of the negative population log-likelihood function
$\loglihoodlogit$ and claim~\eqref{eq:empi_process_logit} about the
uniform perturbation error between $\nabla \loglihoodlogit$ and
$\nabla \loglihoodlogit_{n}$.


\subsubsection{Proof of claim~\eqref{eq:weak_conv_logit}}

Following some algebra, we find that
\begin{align*}
- \loglihoodlogit (\theta) & = \Exs \brackets{ - Y \log \parenth{ 1 +
    e^{- \inprod{X}{\theta} }} - (1 - Y)\log \parenth{1 +
    e^{\inprod{X}{\theta} }} }\\ & \hspace{- 2 em} = - \Exs \brackets{ \frac{1}{ 1 +
    e^{-\inprod{X} {\thetastar} }} \log \parenth{ 1 + e^{-
      \inprod{X}{\theta} }} + \frac{1}{ 1 + e^{ \inprod{X}{\thetastar}
  }} \log \parenth{ 1 + e^{\inprod{X}{\theta} }} },
\end{align*}
where the above expectations are taken with respect to $X \sim
\NORMAL(0, \sigma^2 I_{d})$ and $Y| X$ following probability
distribution generated from logistic
model~\eqref{eq:Bayes_logistic_regress}. Taking the derivative of
$\loglihoodlogit$ with respect to $\theta$ yields
\begin{align*}
    \inprod{ \nabla \loglihoodlogit (\theta)}{ \thetastar - \theta} & \\
    & \hspace{- 3 em} = \Exs \brackets{ \left( \frac{1 + e^{ \inprod{X}{\theta} } }
    {1 + e^{ \inprod{X}{\thetastar} } } - \frac{1 + e^{ - 
    \inprod{X}{\theta} } }{1 + e^{ - \inprod{X}{\thetastar} } } 
    \right)\frac{ e^{-\inprod{X}{\theta}} }{( 1 + e^{- 
    \inprod{X}{\theta} })^2 }\inprod{X}{\theta - \thetastar} }.
\end{align*}
By the mean value theorem, there exists $\xi$ between $0$ and
$\inprod{X}{\theta - \thetastar}$ such that
\begin{align*}
     \frac{1 + e^{ \inprod{X}{\theta} } }{1 + e^{ \inprod{X}
         {\thetastar} } } - \frac{1 + e^{ - \inprod{X}{\theta} }}{1 +
       e^{ - \inprod{X}{\thetastar} } } = \inprod{X}{\theta -
       \thetastar} \parenth{ \frac{ e^{\inprod{X}{\thetastar} + \xi} }
       {1 + e^{\inprod{X}{\thetastar} } } + \frac{ e^{- \inprod{X}
           {\thetastar} - \xi} }{1 + e^{- \inprod{X}{\thetastar} } }
     }.
\end{align*}
In light of the above equality, we arrive at the following inequalities:
\begin{align*}
    \inprod{ \nabla \loglihoodlogit (\theta)}{ \thetastar - \theta} &
    \geq \Exs \biggr[ \inf_{ \abss{ \xi} \in [0, \abss{
            \inprod{X}{\theta - \thetastar}} ] } \left( \frac{
        e^{\inprod{X}{\thetastar} + \xi} }{1 + e^{\inprod{X}
          {\thetastar} } } + \frac{ e^{- \inprod{X}{\thetastar} - \xi}
      } {1 + e^{- \inprod{X}{\thetastar} } } \right) \\ & \hspace{10
        em} \times \frac{ e^{-\inprod{X}{\theta}} }{( 1 + e^{-
          \inprod{X}{\theta} })^2 } |\inprod{X}{\theta -
        \thetastar}|^2 \biggr] \\ & \geq \Exs \brackets{
      \frac{1}{2}e^{- |\inprod{X}{\theta - \thetastar}|} \frac{
        e^{-\inprod{X}{\theta}} }{( 1 + e^{- \inprod{X}{\theta} })^2 }
      |\inprod{X}{\theta - \thetastar}|^2 } \\ & \geq \frac{1}{8} \Exs
    \brackets{ e^{- |\inprod{X}{\theta - \thetastar}| - |\inprod{X}{
          \theta}|} |\inprod{X}{\theta - \thetastar}|^2 }\\ & \geq
    \frac{1}{8 e^4} \Exs \brackets{ \bm{1}_{\{ \abss{
          \inprod{X}{\theta}} \leq 2, \ \abss{ \inprod{X}{\theta -
            \thetastar}} \leq 2 \} }|\inprod{X}{\theta -
        \thetastar}|^2 }.
\end{align*}
Since $X \sim \NORMAL(0, I_{d})$, we have 
\begin{align*}
\left[ \begin{matrix}
    \inprod{X}{\theta}\\ \inprod{X}{\theta - \thetastar}
\end{matrix} \right] \sim \mathcal{N}\left(0, \left[ 
\begin{matrix} \vecnorm{\theta}{2}^2& \inprod{\theta}{\theta - 
\thetastar}\\ \inprod{\theta}{\theta - \thetastar} & 
\vecnorm{\theta - \thetastar}{2}^2 \end{matrix} \right]\right).
\end{align*} 
Given that result, direct calculation leads to
\begin{align*}
    \mathbb{E}\left(\bm{1}_{\{ |\inprod{X}{\theta}| \leq 2, |
    \inprod{X}{\theta - \thetastar}| \leq 2\} }|\inprod{X}
    {\theta - \thetastar}|^2 \right) & \\
    & \hspace{- 3 em} \geq \frac{c}{(1 + 
    \vecnorm{\theta}{2} )(1 + \vecnorm{\theta - \thetastar}{2} )} 
    \vecnorm{\theta - \thetastar}{2}^2,
\end{align*}
for a universal constant $c > 0$. Collecting the above results, 
for all $\theta$ such that $\vecnorm{\theta - \thetastar}{2} 
\leq 1$, we achieve that
\begin{align*}
    \inprod{ \nabla \loglihoodlogit (\theta)}{ \thetastar - \theta} &
    \geq \frac{c}{(1 + \vecnorm{\theta}{2} )(1 + \vecnorm{\theta -
        \thetastar}{2} )} \vecnorm{\theta - \thetastar}{2}^2 \\ & \geq
    \unicon \frac{1}{ 1 + \vecnorm{\thetastar}{2}} \enorm{ \theta -
      \thetastar}^{2}.
\end{align*}
\noindent
For $\theta$ with $\vecnorm{ \theta - \thetastar}{2} > 1$, let $
\widetilde{\theta} = \thetastar + \frac{ \theta - \thetastar} {\vecnorm{
    \theta - \thetastar}{2}}$. Then, we find that
\begin{align*}
    \inprod{\nabla \loglihoodlogit (\theta)}{ \thetastar - 
    \theta} \geq  \inprod{\nabla \loglihoodlogit 
    (\widetilde{\theta})}{ \thetastar - \theta} \geq \frac{c}{2 (1 + 
    \vecnorm{\thetastar}{2})} \vecnorm{\theta - \thetastar}{2},
\end{align*}
which yields the claim~\eqref{eq:weak_conv_logit}.


\subsubsection{Proof of the bound~\eqref{eq:empi_process_logit}}
\label{subsec:uniform_perturb_logit}

In this appendix, we prove the uniform
bound~\eqref{eq:empi_process_logit} between the empirical and
population likelihood gradients.  It suffices to establish the
following stronger result:
\begin{align}
\label{eq:strong_empi_process_logit}
Z \; \mydefn \; \sup_{\theta \in \real^d} \vecnorm{\nabla
  \loglihoodlogit_{n} (\theta) - \nabla \loglihoodlogit ( \theta) }{2}
& \leq \unicon \; \left \{ \sqrt{\frac{\usedim}{\numobs}} +
\sqrt{\frac{\log(1/\delta)}{\numobs}} + \frac{\log(1/\delta)}{\numobs}
\right \},
\end{align}
with probability at least $1-\delta$ for any $\frac{\numobs}{\log
  \numobs} \geq c_0 \usedim \log(1/ \delta)$ where $c_0$ is a
universal constant.

In order to prove the
  claim~\eqref{eq:strong_empi_process_logit}, we exploit a
  concentration inequality due to Adamczak~\cite{adamczak2008tail}; it
  gives tight tail bounds for supremum of unbounded empirical
  processes. Throughout our derivation, we use
  $\vecnorm{X}{\psi_\alpha}$ to denote the Orlicz $\psi_\alpha$ norm
  for a random variable $X$, for any $\alpha \in (0, 2]$.  Let us
state a simplified version of a theorem due to Adamczak:
\begin{proposition}[Theorem 4 of~\cite{adamczak2008tail}, simplified version]
  \label{prop:adamczak}
  Let $(x, \theta) \mapsto f (\theta; x)$ be a function with domain
  $\Theta \times \mathcal{X}$, and suppose that there is a function
  $\bar{F} : \mathcal{X} \rightarrow \real$ such that $|f (\theta, x)|
  \leq \bar{F} (x)$ for any $\theta \in \Theta$. Let $X_1, X_2,
  \cdots, X_\numobs \stackrel{\mathrm{i.i.d.}}{\sim} \Prob_X$, and
  suppose that $\vecnorm{\bar{F}}{\psi_\alpha} < + \infty$ for some
  $\alpha \leq 1$. Then the random variable $Z_\numobs \mydefn
  \frac{1}{\numobs}\sup_{\theta \in \Theta} \abss{\sum_{i = 1}^\numobs
    f (\theta; X_i) - \Exs [f (\theta; X)] }$ satisfies the bound:
    \begin{align*}
        \Prob \left( Z_\numobs > 2 \Exs [Z_\numobs] + t \right) \leq
        \exp \left(- \frac{t^2}{2 \Exs [\bar{F} (X)^2]} \right) + 3
        \exp \left( - \left( \frac{t}{c \vecnorm{\max_{i \in
              [\numobs]}\bar{F} (X_i)}{\psi_{\alpha}}}
        \right)^{\alpha} \right),
    \end{align*}
    for a universal constant $c > 0$.
\end{proposition}

In order to prove the claim~\eqref{eq:strong_empi_process_logit}, we
begin by writing $Z$ as the supremum of a stochastic process. Let
$\sphere^{\usedim-1}$ denote the Euclidean sphere in $\real^\usedim$,
and define the stochastic process
\begin{align*}
 Z_{u, \theta} & \mydefn \left| \frac{1}{\numobs} \sum_{i=1}^\numobs
 f_{u, \theta}(X_i, Y_i) - \Exs[f_{u,\theta}(X, Y)] \right|,
\end{align*}
where $f_{u,\theta}(x,y) = \dfrac{y \inprod{x}{u} e^{y
    \inprod{x}{\theta}}}{1 + e^{y \inprod{x}{\theta}}}$, indexed by
vectors $u \in \sphere^{\usedim-1}$ and $\theta \in \RBALL$.  The
outer expectation in the above display is taken with respect to $(X,
Y)$ drawn from the logistic model~\eqref{eq:Bayes_logistic_regress}

Observe that $Z = \sup
\limits_{u \in \sphere^{\usedim-1}} \sup \limits_{\theta \in \real^d}
Z_{u, \theta}$. Let $\{u^1, \ldots, u^N\}$ be a $1/8$-covering of
$\sphere^{\usedim-1}$ in the Euclidean norm; there exists such a set
with $N \leq 17^\usedim$ elements.  By a standard discretization
argument (see Chapter 6, ~\cite{Wainwright_nonasymptotic}), we have
\begin{align*}
Z \leq 2 \max_{j=1, \ldots, N} \sup_{\theta \in \real^d} Z_{u^j,
  \theta}.
\end{align*}
Accordingly, the remainder of our argument focuses on bounding the
random variable \mbox{$V \mydefn \sup_{\theta \in \real^d} Z_{u, \theta}$,}
where the vector $u \in \sphere^{\usedim-1}$ should be understood as
arbitrary but fixed. For each $u \in \sphere^{d - 1}$ fixed, we note that $\bar{F} (X, Y) = |\inprod{X}{u}|$ is an envelop function for the class $(f_{u, \theta} (X, Y))_{\theta \in \real^d}$. Additionally, by standard tail bounds for maximum of Gaussian random variables, we know that:
\begin{align*}
    \vecnorm{\max_{1 \leq i \leq \numobs} \bar{F} (X_i, Y_i)}{\psi_1} \leq \sqrt{\log \numobs}.
\end{align*}
Consequently, invoking~\cref{prop:adamczak} yields that
\begin{align}
    V \leq 2 \Exs[V] + \sqrt{\frac{2 \log(1/\delta)}{\numobs}} +
    \frac{c \log(1/\delta)}{\numobs} \sqrt{\log
      \numobs}\label{EqnNhatPunk} 
\end{align}
with probability at least $1 - \delta$. 

Now define the symmetrized random variable
\begin{align*}
V' & \defn \sup_{ \theta \in \real^d} \left| \frac{1}{\numobs}
\sum_{i=1}^\numobs \rade_i f_{\theta, u} (X_i, Y_i) \right|.
\end{align*}
where $\{\rade_i\}_{i=1}^\numobs$ is an i.i.d. sequence of Rademacher
variables.  By standard symmetrization arguments, we have
\begin{align*}
  \Exs \left[V\right] & \leq 2\Exs \left[ V' \right].
\end{align*}

We now bound the expectation of $V'$, first over the Rademacher
variables. Consider the function class
\begin{align*}
\mathcal{G} \mydefn \left\{ g_{\theta}: (x, y) \mapsto \inprod{x}{u}
\varphi_\theta (x, y) \; \mid \; \theta \in \real^d \right\}.
\end{align*} 
It is clear that the function class $\mathcal{G}$ has the envelope
function $\bar{G}(x) \mydefn \abss{\inprod{x}{u}}$.  We claim that the
$L_2$-covering number of $\mathcal{G}$ can be bounded as
\begin{align}
\label{eq:Wenlong_diffusion_wizard}
\bar{N}(t) \mydefn \sup_{Q} \abss{\mathcal{N} \left( \mathcal{G},
  \vecnorm{\cdot}{L^2 (Q)}, t \vecnorm{\bar{G}}{L^2 (Q)} \right)} \leq
\left( \frac{1}{t} \right)^{c(d + 1)} \qquad \mbox{for all $t > 0$,}
\end{align}
where $c > 0$ is a universal constant.

Let us take the claim~\eqref{eq:Wenlong_diffusion_wizard} as given for
the moment, and use it to bound the expectation of $V'$, first over
the Rademacher variables.  Define the empirical expectation
\mbox{$\mprob_{n}(\bar{G}^2) \defn \frac{1}{n} \sum_{i = 1}^{n}
  \inprod{X_{i}}{u}^2$.}  Invoking Dudley's entropy integral bound
(e.g., Theorem 5.22, ~\cite{Wainwright_nonasymptotic}), we find that
there are universal constants $C, C'$ such that
\begin{align*}
  \Exs_\rade[V'] = \Exs_{\rade} \left[ \sup_{ g \in \mathcal{G} }
    \left| \frac{1}{\numobs} \sum_{i=1}^\numobs \rade_i g (X_i, Y_i)
    \right| \right] & \leq C \sqrt{\frac{\mprob_n(\bar{G}^2)}{n} }
  \int_0^1 \sqrt{1 + \log \bar{N} (t)} d t \\ & \leq C' \sqrt{\mprob_n
    (\bar{G}^2) } \sqrt{\frac{d}{n}}.
\end{align*}
Up to this point, we have been conditioning on the observations
$\{X_{i}\}_{i = 1}^{n}$.  Taking expectations over them as well yields
\begin{align}
\label{eq:Martin_the_Grumpy_Dinosaur}
    \Exs_{\varepsilon, X_1^n} [V'] \leq C' \sqrt{\frac{d}{n}} \cdot
    \Exs_{X_{1}^{n}} \brackets{ \sqrt{\mprob_n(\bar{G}^2)}}
    \stackrel{(i)}{\leq} C' \sqrt{\frac{d}{n}} \cdot
    \sqrt{\Exs_{X_{1}^{n}} \brackets{ \mprob_n(\bar{G}^2)}}
    \stackrel{(ii)}{=} C' \sqrt{\frac{d}{n}},
\end{align}
where step (i) follows from Jensen's inequality; and step (ii) uses
the fact that $\Exs_{X_1^n}[\mprob_n(\bar{G}^2)] = 1$. Putting
together the bounds~\eqref{EqnNhatPunk}
and~\eqref{eq:Martin_the_Grumpy_Dinosaur} yields the following bound
with probability $1 - \delta$:
\begin{align*}
    V \leq c \sqrt{\frac{d + \log \delta^{-1}}{\numobs}} + c
    \frac{\log \delta^{-1}}{\numobs} \sqrt{\log \numobs}.
\end{align*}
This probability bound holds for each $u \in \sphere^{\usedim-1}$.  By
taking the union bound over the $1/8$-covering set $\{u^{1}, \ldots,
u^{N}\}$ of $\sphere^{\usedim-1}$ where $N \leq 17^{d}$ and applying
above bound with $\delta' = \delta / N$, we obtain the
claim~\eqref{eq:strong_empi_process_logit} for sample size satisfying
$\frac{\numobs}{\log \numobs} \geq c d \log(1/\delta)$.


\subsubsection{Proof of claim~\eqref{eq:Wenlong_diffusion_wizard}}

We consider a fixed sequence $(x_i, y_i, t_i)_{i = 1}^{m}$ where
$y_{i} \in \{- 1, 1 \}$, $x_{i} \in \Rspace^{d}$ and $t_{i}
\in \Rspace$ for $i \in [m]$.  Now, we suppose that for any binary
sequence $(z_i)_{i = 1}^{m} \in \{0, 1\}^{m}$, there exists $\theta
\in \real^d$ such that
\begin{align*}
    z_i = \Ind \left[ \inprod{X_i}{u} \varphi_\theta (X_i, Y_i) \geq
      t_i \right] \qquad \text{for all} \ i \in [m].
\end{align*}
Following some algebra, we find that
\begin{align*}
 y_i x_i^T\theta - \log \frac{Y_i t_i}{\inprod{X_i}{u} - Y_i
   t_i} \begin{cases} \geq 0& z_i = 1\\ < 0 & z_i = 0
 \end{cases}.
\end{align*}
Consequently, the set $\{ [y_i x_i , \log ({Y_i t_i} /
  (\inprod{X_i}{u} - Y_i t_i) )] \}_{i = 1}^m$ of $(d +
1)$-dimensional points can be shattered by linear separators.
Therefore, we have $m \leq d + 2$, which leads to the VC subgraph
dimension of $\mathcal{G}$ to be at most $d + 2$ (e.g., see the
book~\cite{Vaart_Wellner_2000}).  As a consequence, we obtain the
conclusion of the claim~\eqref{eq:Wenlong_diffusion_wizard}.
\subsection{Proof of~\cref{cor:single_index}}
\label{subsec:proof:cor:single_index}

The claim~\eqref{eq:weak_conv_index} of weak convexity for the
negative population log-likelihood function $\loglihoodind$ is
straightforward. Therefore, we only need to establish the
claim~\eqref{eq:empi_process_index} about the uniform perturbation
bound between $\nabla \loglihoodind$ and $\nabla \loglihoodind_{n}$.


\subsubsection{Bounding the difference $\nabla \loglihoodind - \nabla \loglihoodind_{n}$}

It is convenient to introduce the shorthand
\begin{align*}
  p_{ \theta} (x, y) = \parenth{ y - \parenth{ x^{\top}
      \theta}^{p}}^{2}/ 2 \qquad \mbox{for all $(x, y) \in \Rspace^{d
      + 1}$.}
\end{align*}
We then compute the gradient
\begin{align*}
  \nabla \log p_\theta (x, y) 
  = p \parenth{ y - \parenth{ x^{ \top} \theta}^p } 
  \parenth{x^{\top} \theta}^{p - 1} x.
\end{align*}
Fix an arbitrary $r > 0$, by applying the triangle inequality, we find
that
\begin{align*}
  \sup_{\theta \in \ball( \thetastar, r)} \enorm{\nabla
    \loglihoodind_{n} (\theta) - \nabla \loglihoodind ( \theta) } & \\
     & \hspace{- 7 em} =
  \sup_{\theta \in \ball( \thetastar, r)} \enorm{\frac{1}{n} \sum_{i =
      1}^{n} \nabla \log p_ \theta (X_{i}, Y_{i}) - \Exs_{(X, Y)}
    \brackets{\nabla \log p_\theta (X, Y)}} \\
  & \hspace{- 7 em} \leq p  \left \{ J_1 + J_2 \right \},
\end{align*}
where we define
\begin{subequations}
\label{eq:empi_process_bound_index}  
  \begin{align}
J_1 & \mydefn p \sup_{\theta \in \ball( \thetastar, r)}
\enorm{\frac{1}{n} \sum_{i = 1}^{n} Y_{i} X_{i} \parenth{X_{i}^{\top}
    \theta}^{p - 1}}, \quad \mbox{and} \\
J_2 & \mydefn p \sup_{\theta \in \ball( \thetastar, r)}
\enorm{\frac{1}{n} \sum_{i = 1}^{n} X_{i} \parenth{X_{i}^{\top}
    \theta}^{2 p - 1} - \Exs_{X} \brackets{X \parenth{ X^{\top}
      \theta}^{2 p - 1}}}.
  \end{align}
\end{subequations}
We claim that there is a universal constant $\unicon$ such that for
any $\delta \in (0,1)$, the quantities $J_{1}$ and $J_{2}$ can be
bounded as
\begin{subequations}
\begin{align}
 \label{eq:empi_process_J1_index}   
  J_{1} & \leq \unicon \; r^{p - 1} \left( \sqrt{\frac{d + \log
      \frac{1}{\delta}}{n} } + \frac{1}{n^{3/2}} \left( d + \log
  \frac{n}{\delta}\right)^{p + 1} \right), \quad \mbox{and} \\
\label{eq:empi_process_J2_index}  
  J_{2} & \leq \unicon \; r^{2 p - 1} \left( \sqrt{\frac{d + \log
      \frac{1}{\delta}}{n} } + \frac{1}{n^{3/2}} \left( d + \log
  \frac{n}{\delta} \right)^{2 p + 1} \right),
\end{align}
\end{subequations}
with probability at least $1 - \delta$.

Assume that the above claims are given at the moment.  We proceed to
finish the proof of the uniform perturbation bound between $\nabla
\loglihoodind_{n}$ and $\nabla \loglihoodind$
in~\eqref{eq:empi_process_index}.  In fact, plugging the concentration
bounds~\eqref{eq:empi_process_J1_index}
and~\eqref{eq:empi_process_J2_index}
into~\eqref{eq:empi_process_bound_index}, we obtain that
\begin{align*}
  \sup_{\theta \in \ball( \thetastar, r)} \enorm{\nabla \loglihoodind_{n} (\theta) 
      - \nabla \loglihoodind ( \theta) } & \\
      & \hspace{- 10 em} \leq c \parenth{r^{p - 1} + r^{2 p - 1}} \sqrt{ \frac{d + \log(1 / \delta)}{n}}\\
      & \hspace{- 10 em} + \frac{r^{p - 1} (d + \log (1 / \delta) + \log n)^{p + 1} 
      + r^{2p - 1} (d + \log (1 / \delta) + \log n)^{2p + 1} }{n^{\frac{3}{2}}},
\end{align*}
for any $r > 0$ with probability at least $1 - 2 \delta$ where $c$ is
a universal constant.  When $n \geq c' \parenth{ d + \log (d / \delta)
}^{2 p}$ for some universal constant $c'$, it is clear that the the
second term is dominated by the first term in the RHS of the above
inequality. As a consequence, we have proved the
claim~\eqref{eq:empi_process_index}.


\subsubsection{Proof of claim~\eqref{eq:empi_process_J1_index}}

Following some algebra, we find that
\begin{align} 
  \sup_{r > 0} \frac{\sup \limits_{\theta \in \ball( \thetastar, r)}
    \enorm{\frac{1}{n} \sum_{i = 1}^{n} Y_{i} X_{i}
      \parenth{X_{i}^{\top} \theta}^{p - 1}}}{r^{p - 1}} & \nonumber \\
      & \hspace{- 6 em} \leq
  \sup_{r > 0} \sup_{\theta \in \ball( \thetastar, r)}
  \enorm{\frac{1}{n} \sum_{i = 1}^{n} Y_{i} X_{i}
    \parenth{X_{i}^{\top} \frac{\theta}{\enorm{ \theta}}}^{p - 1}}
  \nonumber \\
  & \hspace{- 6 em} = \underbrace{\sup_{\theta \in \sphere^{d - 1}} \enorm{\frac{1}{n}
      \sum_{i = 1}^{n} Y_{i} X_{i} \parenth{X_{i}^{\top} \theta}^{p -
        1}}}_{= : \: Z}.
  \label{eq:normalize_J1_index}
\end{align}
Thus, in order to establish the
claim~\eqref{eq:empi_process_J1_index}, it suffices to show that there
is a universal constant $\unicon$ such that
\begin{align}
\label{eq:surogate_J1_index}
\Prob \parenth{ Z \leq \unicon \sqrt{\frac{d + \log(1/ \delta)}{n}} +
  \frac{1}{n^{3/2}} \left( d + \log \frac{n}{\delta} \right)^{p + 1} }
\geq 1 - \delta.
\end{align}
By the variational definition of the Euclidean norm, we have
\begin{align*}
  Z & = \sup_{\theta \in \sphere^{d - 1}} \enorm{\frac{1}{n} \sum_{i =
      1}^{n} Y_{i} X_{i} \parenth{X_{i}^{\top} \theta}^{p - 1}} \\
      & =
  \sup_{u \in \sphere^{d - 1}} \underbrace{\sup_{\theta \in \sphere^{d
        - 1}} \abss{\frac{1}{n} \sum_{i = 1}^{n} Y_{i} X_{i}^{ \top} u
      \parenth{X_{i}^{\top} \theta}^{p - 1}}}_{\mydefn Z_{u}}.
\end{align*}
Using a discretization argument as
in~\cref{subsec:uniform_perturb_logit}, we find that
\begin{align*}
  Z \leq 2 \sup_{u \in \mathcal{N} 
  \parenth{ \frac{1}{8}, \sphere^{d - 1}, \|.\|_{2}}} Z_{u},
\end{align*}
where $\mathcal{N} \parenth{ \frac{1}{8}, \sphere^{d - 1}, \|.\|_{2}}$ is
the $\frac{1}{8}$-covering of $\sphere^{d - 1}$ under $\|.\|_{2}$ norm. 
Therefore, it is sufficient to bound $Z_{u}$ for any fixed 
$u \in \mathcal{N} \parenth{ \frac{1}{8}, \sphere^{d - 1}, \|.\|_{2}}$.

For any even integer $q \geq 2$, a symmetrization argument (e.g.,
Theorem 4.10, ~\cite{Wainwright_nonasymptotic}) yields
\begin{align*}
  \Exs \parenth{\sup_{\theta \in \sphere^{d - 1}} \abss{\frac{1}{n} 
  \sum_{i = 1}^{n} Y_{i} X_{i}^{ \top} u \parenth{X_{i}^{\top} \theta}^{p - 1}} }^{q} & \\ 
  & \hspace{- 3 em} \leq \Exs \parenth{\sup_{\theta \in \sphere^{d - 1}} \abss{\frac{2}{n} \sum_{i = 1}^{n} 
  \rade_i Y_{i} X_{i}^{ \top} u \parenth{X_{i}^{\top} \theta}^{p - 1}} }^q ,
\end{align*}
where $\{\rade_i\}_{i=1}^\numobs$ is an i.i.d. sequence of Rademacher
variables.  In order to facilitate the proof argument, for any $t >
0$, we introduce the shorthand $\mathcal{N}(t) \mydefn \mathcal{N}
\parenth{t, \sphere^{d - 1}, \|.\|_{2}} = \{ \theta_{1}, \ldots,
\theta_{\bar{N}(t)}\}$ where $\bar{N}(t) = \abss{\mathcal{N}
  \parenth{t, \sphere^{d - 1}, \|.\|_{2}}}$.  For any compact set
$\Omega \subseteq \real^d$, we define the following random variable:
\begin{align*}
    \mathcal{R} (\Omega) \mydefn \sup_{\theta 
    \in \Omega, p' \in [1, p]} \abss{\frac{2}{n} 
  \sum_{i = 1}^{n} \rade_i Y_{i} X_{i}^{ \top} u \parenth{X_{i}^{\top} \theta}^{p' - 1}}.
\end{align*}
By the definition of $t$-covering, we obtain that
\begin{align}
\mathcal{R} (\sphere^{d - 1}) & = \sup_{\theta \in \sphere^{d - 1}, p'
  \in [1, p]} \abss{\frac{2}{n} \sum_{i = 1}^{n} \rade_i Y_{i} X_{i}^{
    \top} u \parenth{X_{i}^{\top} \theta}^{p' - 1}} \nonumber \\
& \leq \sup_{\theta_k \in \mathcal{N}( t), \enorm{ \eta} \leq
  t, p' \in [1, p]} \abss{\frac{2}{n} \sum_{i = 1}^{n}
  \rade_i Y_{i} X_{i}^{ \top} u \parenth{X_{i}^{\top} (\theta_{k} +
    \eta)}^{p' - 1}} \label{eq:bound_J1_index_second} \\
& \leq \sup_{\theta_k \in \mathcal{N}( t), p' \in [1, p]} \abss{\frac{4}{n}
  \sum_{i = 1}^n \rade_i Y_i X_{i}^{\top} u \parenth{ X_i^{ \top}
    \theta}^{p' - 1}} \nonumber \\
& + \max_{p' \in [1, p]} \sum_{b = 1}^{p' - 1}
\binom{p' - 1}{b} \cdot \sup_{\vecnorm{\eta}{2} \leq t}
\abss{\frac{4}{n} \sum_{i = 1}^n \rade_i Y_i \inprod{X_i}{u}
  \inprod{X_i}{\eta}^{b} } \nonumber \\ & \leq \mathcal{R}
(\mathcal{N}( t)) + 2^{p + 1} t \cdot \mathcal{R}
(\sphere^{d - 1}). \nonumber
\end{align}
By choosing $t = 2^{- (p + 2)}$, the above inequality leads to
\begin{align*}
\mathcal{R} (\sphere^{d - 1}) \leq 2 \mathcal{R} (\mathcal{N} (2^{-
    (p + 2)})).
\end{align*}
In order to obtain a high-probability upper bound on $\mathcal{R}
(\mathcal{N}(2^{- (p + 2)}))$, we bound its moments. By the union
bound, for any $q \geq 1$, we have
\begin{align*}
    \Exs \brackets{ \mathcal{R}^{q} \parenth{ \mathcal{N} (2^{- (p +
            2)})}} \leq p \cdot \abss{ \mathcal{N} (2^{- (p + 2)})} & \\
    & \hspace{- 8 em} \times \sup_{\theta \in \sphere^{d - 1}, p' \in [1, p]}
    \underbrace{ \Exs \brackets{ \parenth{ \abss{\frac{4}{n} \sum_{i =
              1}^n \rade_i Y_i X_{i}^{\top} u \parenth{ X_i^{ \top}
              \theta}^{p' - 1}} }^q} }_{\mydefn T_1 (\theta, p')}.
\end{align*}
In order to upper bound $T_{1}( \theta, p')$, we apply Khintchine's
inequality~\cite{Lugosi2016}; it guarantees that there is a universal
constant $C$ such that
\begin{subequations}
\begin{align}
  T_{1} (\theta, p') \leq \Exs \brackets{ \parenth{ \frac{C
        q}{n^2}\sum_{i = 1}^n Y_i^2 (X_i^{ \top} u)^2 (X_i^{\top}
      \theta)^{2 (p' - 1)}}^\frac{q}{2} }, \label{eq:bound_T1_index}
\end{align}
for any $p' \in [1, p]$ .  In order to further upper bound the right
hand side, we define the function \mbox{$g_{\theta, u} (x, y) \defn
  y^2 (x^{ \top} u)^2 (x^{\top} \theta)^{2 (p' - 1)}$.} For any $i \in
[n]$, we can verify that
\begin{align*}
\Exs \brackets{g_{\theta, u} (X_i, Y_i)} = & \Exs \brackets{Y_i^2
  \cdot \Exs \left( (X_i^{ \top} u)^2 (X_i^{\top} \theta)^{2 (p - 1)}
  \right)} \leq (2 p')^{p'},\\ \Exs \brackets{g_{\theta, u} (X_i,
  Y_i)^q} = & \Exs \brackets{Y_i^{2 q} \cdot \Exs \left( (X_i^{ \top}
  u)^{2 q} (X_i^{\top} \theta)^{2 (p' - 1) q} \right)} \leq (2 q)^q
(2p' q)^{p' q}.
\end{align*}
Given the above bounds, invoking the result
of~\cref{lemma-truncated-bernstein} leads to the following probability
bound
\begin{align*}
    \Prob \biggr( \abss{\frac{1}{n} \sum_{i = 1}^n g_{\theta, u} (X_i,
      Y_i) - \Exs_{(X, Y)} \brackets{g_{\theta, u} (X, Y)}} & \\
      & \hspace{- 6 em} > (8
    p')^{p'} \sqrt{\frac{\log 4 / \delta}{n}} + \frac{1}{n} \left(2 p'
    \log \frac{n}{\delta} \right)^{p + 1} \biggr) \leq \delta,
\end{align*}
for all $\delta \in (0,1)$.  Here the outer expectation in the above
display is taken with respect to $(X,Y)$ such that $X \sim \NORMAL(0,
I_{d})$ and $Y \mid X = x \sim \NORMAL( (x^{\top} \thetastar)^{p},
1)$.  Combining the previous bounds yields
\begin{align}
     & \hspace{- 3 em} \Exs \brackets{\left( \frac{1}{n} \sum_{i = 1}^n
    g_{\theta, u} (X_i, Y_i) \right)^{q/ 2}} \nonumber \\
  & \leq 2^{q/2} \left( \Exs_{(X, Y)} \brackets{g_{\theta, u} (X, Y)}
  \right)^{q/2} \nonumber \\
  & \hspace{8em} + 2^{q/ 2} \Exs \brackets{\abss{\frac{1}{n} \sum_{i =
        1}^n g_{\theta, u} (X_i, Y_i) - \Exs_{(X, Y)}
      \brackets{g_{\theta, u} (X, Y)} }^{q/ 2}} \nonumber \\
& \leq (4 p')^{p q} + q \int_{0}^{+ \infty} \lambda^{q - 1} \Prob
  \left( \abss{ \frac{1}{n} \sum_{i = 1}^n g_{\theta, u} (X_i, Y_i) -
    \Exs_{(X, Y)} \brackets{g_{\theta, u} (X, Y)}} > \lambda \right) d
  \lambda \nonumber \\
& \leq (4 p')^{p' q} + q \int_{0}^{1} ( p' + 1) \left( (8 p')^{p'}
  \sqrt{\frac{\log 4 / \delta}{n}} + \frac{1}{n}\left(2 p' \log
  \frac{n}{\delta} \right)^{p' + 1} \right)^{q} \log^{-1}
  \frac{4}{\delta} d \delta \nonumber \\
& \leq (4 p')^{p' q} + C p' q \biggr( \frac{(16 p')^{p' q} }{n^{
      \frac{q}{2}}} \Gamma (q / 2) \nonumber \\
  & \hspace{8 em} + \frac{(2 p')^{(p' + 1) q} }{n^q} \left( (2\log
  n)^{(p' + 1) q} + \Gamma \left({(p' + 1) q}\right) \right)
  \biggr) \label{eq:bound_T1_index_second},
\end{align}
\end{subequations}
where $\Gamma$ denotes the Gamma function.  Combining the
bounds~\eqref{eq:bound_T1_index} and~\eqref{eq:bound_T1_index_second},
we reach to the following upper bound for $T_{1}( \theta, p')$:
\begin{align}
T_{1} (\theta, p') & \leq \parenth{\frac{C q}{n}}^{q/ 2} \biggr[(4
  p')^{p' q} + C p' q \biggr( \frac{(16 p')^{p q} }{n^{ \frac{q}{2}}}
  \Gamma (q / 2) \nonumber \\ & \hspace{6 em} + \frac{(2 p' )^{(p' +
      1) q} }{n^q} \left( (2\log n)^{(p' + 1) q} + \Gamma \left({(p' +
    1) q}\right) \right) \biggr) \biggr]. \label{eq:final_bound_T1}
\end{align}

Plugging the upper bounds of $T_{1}$ in
equation~\eqref{eq:final_bound_T1} into
equation~\eqref{eq:bound_J1_index_second} and taking the union bound
over all $\theta_{k} \in \mathcal{N} \parenth{ 2^{- (p + 2)},
  \sphere^{d - 1}, \| \cdot \|_{2}}$, we find that
\begin{align*}
\Exs \brackets{ \mathcal{R}^{q} (\sphere^{d - 1})} & \leq 2^q \Exs
\brackets{ \mathcal{R}^{q} \parenth{\mathcal{N} (2^{- (p + 2)}) }}
\\ & \leq 2^q p \left( 2^{p + 3} \right)^{d} \sup_{\theta \in
  \sphere^{d - 1}, p' \in [1, p]} T_1 (\theta, p')\\ & \leq 2^q p
\left( 2^{p + 3} \right)^{d} \left( \frac{C q}{n}\right)^{\frac{q}{2}}
\biggr[ (4p)^{p q} + C p q \biggr( \frac{(16 p)^{p q} }{n^{
      \frac{q}{2}}} \Gamma (q / 2) \\
& \hspace{6 em} + \frac{(2 p )^{(p + 1) q} }{n^q} \left( (2\log
  n)^{(p + 1) q} + \Gamma \left({(p + 1) q}\right) \right) \biggr)
  \biggr],
\end{align*}
for any given $u \in \mathcal{N} \parenth{ \frac{1}{8}, \sphere^{d -
    1}, \|.\|_{2}}$.  
    
    Taking the supremum over $u \in \mathcal{N}
\parenth{ \frac{1}{8}, \sphere^{d - 1}, \|.\|_{2}}$ of both sides in
the above bound and applying Minkowski's inequality, we obtain that
\begin{align*}
    \left(\Exs |Z|^q \right)^{\frac{1}{q}} \leq & \left(\frac{64}{7}
    \right)^{d / q} \left(\Exs \brackets{\sup_{\theta \in \sphere^{d -
          1}} \abss{\frac{2}{n} \sum_{i = 1}^{n} \rade_{i} Y_{i}
        X_{i}^{ \top} u \parenth{X_{i}^{\top} \theta}^{p -
          1}}^{q}}\right)^{\frac{1}{q}}\\ \leq& 2 \left(10 \cdot 2^{p
      + 3}\right)^{d / q} \left[ \sqrt{\frac{C_p q}{n}} + \frac{C_p
        q}{n} + \frac{C_p}{n^{\frac{3}{2}}} \left( \log n + q
      \right)^{p + 1} \right],
\end{align*}
where $C_p$ is a universal constant depending only on $p$.  By
choosing $q = d (p + 7) + \log \frac{2}{\delta}$ and using Markov
inequality, we find that
\begin{align*}
    \Prob \left( |Z| \geq C_p \left( \sqrt{\frac{d + \log
        \frac{1}{\delta}}{n}} + \frac{1}{n^{3/2}} \left( d + \log
    \frac{n}{\delta} \right)^{p + 1} \right) \right) \leq \delta.
\end{align*}
Thus, we have establish the claim~\eqref{eq:empi_process_J1_index}.

\vspace{0.5em}
\noindent
\textit{Proof of claim~\eqref{eq:empi_process_J2_index}:} In order to obtain a uniform concentration bound for $J_{2}$, we use
an argument similar to that from the proof of
claim~\eqref{eq:empi_process_J1_index}.  In particular, since
polynomial $(x^{\top} \theta)^{2 p - 1}$ is homogeneous in terms of
$\theta$, using the same normalization as in
equation~\eqref{eq:normalize_J1_index}, it suffices to demonstrate
that
\begin{align}
\label{eq:surogate_J2_index}
\Prob \parenth{ W \leq \unicon r^{2 p - 1} \left( \sqrt{\frac{d + \log
      \frac{1}{\delta}}{n}} + \frac{1}{n^{3/2}} \left( d + \log
  \frac{n}{\delta}\right)^{2 p + 1} \right) } \geq 1 - \delta,
\end{align} 
for any $\delta > 0$ where we define
\begin{align*}
  W \mydefn \sup_{\theta \in \sphere^{d -
    1}} \enorm{\dfrac{1}{n} \sum_{i = 1}^{n} X_{i}
  \parenth{X_{i}^{\top} \theta}^{2 p - 1} - \Exs_{X} \brackets{X
    \parenth{ X^{\top} \theta}^{2 p - 1}}}.
\end{align*}

For each $u \in \real^d$, define the random variable
\begin{align*}
  W_{u} \mydefn \sup_{\theta \in \sphere^{d - 1}}
\abss{\frac{1}{n} \sum_{i = 1}^{n} X_{i}^{\top} u
  \parenth{X_{i}^{\top} \theta}^{2 p - 1} - \Exs_{X}
  \brackets{X^{\top} u \parenth{ X^{\top} \theta}^{2 p - 1}}}.
\end{align*}
It suffices to bound $W_{u}$ for fixed $u \in \mathcal{N} \parenth{
  \frac{1}{8}, \sphere^{d - 1}, \|.\|_{2}}$.  We bound $W_{u}$ by
controlling its moments.  By a symmetrization argument, we have
\begin{align*}
  \Exs \brackets{ \sup_{\theta \in \sphere^{d - 1}}
          \abss{\frac{1}{n} \sum_{i = 1}^{n} X_{i}^{\top} u
            \parenth{X_{i}^{\top} \theta}^{2 p - 1} - \Exs_{X}
            \brackets{X^{\top} u \parenth{ X^{\top} \theta}^{2 p -
                1}}}^{q}} & \\ & \hspace{- 13 em} \leq \Exs \brackets{
          \sup_{\theta \in \sphere^{d - 1}} \abss{\frac{2}{n} \sum_{i
              = 1}^{n} \rade_{i} X_{i}^{\top} u \parenth{X_{i}^{\top}
              \theta}^{2 p - 1}}^{q}}.
\end{align*} 
From here, we can use the same technique as that in and
after inequality~\eqref{eq:bound_J1_index_second} to bound the RHS term in the
above display. Therefore, we will only highlight the main differences
here. For any compact set $\Omega \subseteq \real^d$, we define the
random variable
\begin{align*}
\mathcal{Q} (\Omega) \mydefn \sup_{\theta \in \Omega, p' \in [1, p]}
\abss{\frac{2}{n} \sum_{i = 1}^{n} \rade_i X_{i}^{ \top} u
  \parenth{X_{i}^{\top} \theta}^{2 p' - 1}}.
\end{align*}
Following the similar argument as that
in equation~\eqref{eq:bound_J1_index_second}, we can check that $\mathcal{Q}
(\sphere^{d - 1}) \leq 2 \mathcal{Q} \parenth{ \mathcal{N} (2^{- (2 p
    + 2)}) }$. A direct application of union bound leads to
\begin{align*}
    \Exs \brackets{\mathcal{Q}^{q} \parenth{ \mathcal{N} (2^{- (2 p +
          2)}) }} \leq 2 p \cdot \abss{ \mathcal{N} (2^{- (2 p + 2)})} & \\
    & \hspace{- 9 em} \times \sup_{\theta \in \sphere^{d - 1}, p' \in [1, p]}
    \underbrace{ \Exs \brackets{ \parenth{ \abss{\frac{4}{n} \sum_{i =
              1}^n \rade_i X_{i}^{\top} u \parenth{ X_i^{ \top}
              \theta}^{2 p' - 1}} }^q} }_{\mydefn T_2 (\theta, p')}.
\end{align*}
We control $T_{2}( \theta, p')$ using the same approach as that the
proof of claim~\eqref{eq:empi_process_J1_index}.  For the convenience
of notation, we denote $h_{\theta, u} (x) \mydefn (x^\top u)^2
(x^\theta)^{2 (2 p' - 1)}$. Simple algebra lead to the following upper
bounds:
\begin{align*}
    \Exs \brackets{ h_{\theta, u} (X_i)} \leq (4 p')^{2 p'}, \ \quad
    \ \Exs \brackets{h_{\theta, u} (X_i)^q} \leq (4 p' q)^{2 p' q}.
\end{align*}
Invoking the result of~\cref{lemma-truncated-bernstein}, the above
bounds lead to the following probability bound:
\begin{align*}
    \Prob \biggr( \abss{ \frac{1}{n} \sum_{i = 1}^n h_{\theta, u}(X_i)
      - \Exs_{X} \brackets{h_{\theta, u}(X)}} & \\
      & \hspace{- 6 em} \leq (16 p')^{2 p'}
    \sqrt{\frac{\log 4 / \delta}{n} } + \parenth{ 4 p' \log
      \frac{n}{\delta}}^{2 p'} \frac{\log 4 / \delta}{n} \biggr) \leq
    \delta.
\end{align*}
Therefore, we further obtain that
\begin{align*}
    \Exs \brackets{ \left( \frac{1}{n} \sum_{i = 1}^n h_{\theta, u}
      (X_i) \right)^{q/ 2}} \leq (8 p')^{2 p' q} + C p' q \biggr(
    \frac{(32 p')^{2 p' q} }{n^{ \frac{q}{2}}} \Gamma (q / 2) &
    \\ & \hspace{- 18 em} + \frac{(4 p' )^{ (2 p' + 1) q} }{n^q} \left(
    (2\log n)^{(2 p' + 1) q} + \Gamma \left({(2 p' + 1) q}\right)
    \right) \biggr).
\end{align*}
Combining the above bound and an upper bound of $T_{2} (\theta, p')$
based on Khintchine's inequality, we obtain the following inequality:
\begin{align*}
    T_{2}( \theta, p') & \leq \parenth{\frac{C q}{n}}^{q/ 2} \biggr[ (8 p')^{2 p' q} 
    + C p' q \biggr(  \frac{(32 p')^{2 p' q} }{n^{ \frac{q}{2}}} \Gamma (q / 2) \\
    & \hspace{ 6 em} + \frac{(4 p' )^{ (2 p' + 1) q} }{n^q} 
    \left( (2 \log n)^{(2 p' + 1) q} + \Gamma \left({(2 p' + 1) q}\right) \right) \biggr) \biggr].
\end{align*}
Collecting the above bounds leads to
\begin{align*}
    \Exs \brackets{ \mathcal{Q}^{q} (\sphere^{d - 1})} & \leq 2^{q +
      1} p \left( 2^{2 p + 3} \right)^{d} \sup_{\theta \in \sphere^{d
        - 1}, p' \in [1, p]} T_2 (\theta, p')\\ & \leq 2^{q + 1} p
    \left( 2^{2 p + 3} \right)^{d} \left( \frac{C
      q}{n}\right)^{\frac{q}{2}} \biggr[ (8 p)^{2 p q} + C p q \biggr(
      \frac{(32 p)^{2 p q} }{n^{ \frac{q}{2}}} \Gamma (q / 2)
      \\ & \hspace{5 em} + \frac{(4 p )^{(2 p + 1) q} }{n^q} \left(
      (2\log n)^{(2 p + 1) q} + \Gamma \left({(2 p + 1) q}\right)
      \right) \biggr) \biggr],
\end{align*}
for any fixed $u \in \mathcal{N} \parenth{ \frac{1}{8}, \sphere^{d -
    1}, \|.\|_{2}}$.  Taking supremum over $u \in \mathcal{N}
\parenth{ \frac{1}{8}, \sphere^{d - 1}, \|.\|_{2}}$ of both sides in
the above bound and applying Minkowski's inequality, we arrive at the
following bound:
\begin{align*}
    (\Exs \brackets{ |W|^q})^{\frac{1}{q}} \leq & \left( \frac{64}{7}
  \right)^{\frac{d}{q}} \left( \Exs \brackets{ \sup_{\theta \in
      \sphere^{d - 1}} \abss{\frac{2}{n} \sum_{i = 1}^n \sigma_i
      X_i^\top u \left( X_i^\top \theta \right)^{2 p - 1} }^q}
  \right)^{\frac{1}{q}}\\ \leq & \left( \frac{10}{\varepsilon}
  \right)^{\frac{d}{q}} \left[ \sqrt{\frac{C_p q}{n} } + \frac{C_p
      q}{n} + \frac{C_p}{n^{\frac{3}{2}}} (\log n + q)^{2 p + 1}
    \right],
\end{align*}
where $C_p$ is a universal constant depending only upon $p$. With the
choice of $q = d (2 p + 7) + \log \frac{2}{\delta}$, we obtain that
\begin{align*}
    \Prob \left( |W| \geq C_p \left( \sqrt{\frac{d + \log
        \frac{1}{\delta}}{n}} + \frac{1}{n^{3/2}} \left( d + \log
    \frac{n}{\delta} \right)^{2 p + 1} \right) \right) \leq \delta.
\end{align*}
Thus, we have established the claim~\eqref{eq:empi_process_J2_index}.


\subsection{Proof of~\cref{cor:Gaussian_mixture}}
\label{subsec:proof:cor:Gaussian_mixture}

We prove~\cref{cor:Gaussian_mixture} by verifying the
claims~\eqref{eq:weak_conv_gaus} and~\eqref{eq:empi_process_gaus}.


\subsubsection{Structure of $\loglihoodgaus$}
\label{subsec:proof_geometry_weak_con}

Direct algebra leads to the following equation
\begin{align}
\label{eq:weak_con_gaus_first}
  \inprod{ \nabla \loglihoodgaus(\theta)}{ \thetastar - \theta } & =
  \parenth{\theta - \Exs \brackets{ X \tanh \parenth{ X^{ \top}
        \theta}}}^{\top} ( \theta - \thetastar) \notag \\ & \geq
  \enorm{ \theta}^2 - \enorm{\theta} \enorm{\Exs \brackets{ X \tanh
      \parenth{ X^{\top} \theta}}}
\end{align}
where $\tanh(x) \mydefn \frac{\exp( x) - \exp( - x)}{\exp( x) + \exp(
  - x)}$ for all $x \in \Rspace$.  From Theorem 2 in Dwivedi et
al.~\cite{Raaz_Ho_Koulik_2018}, we have
\begin{align*}
  \enorm{\Exs \brackets{ X \tanh \parenth{ X^{\top} \theta}}} \leq
  \parenth{1 - p + \frac{p}{1 + \frac{ \enorm{ \theta}^2}{2} }}
  \enorm{ \theta}
\end{align*} 
for all $\theta \in \Rspace^{d}$ where $p \mydefn \Prob \parenth{
  \abss{Y} \leq 1} + \frac{1}{2} \Prob \parenth{ \abss{Y} > 1}$ where
$Y \sim \NORMAL(0, 1)$.  Plugging the above inequality
into equation~\eqref{eq:weak_con_gaus_first} leads to
\begin{align*}
  \inprod{ \nabla \loglihoodgaus(\theta)}{ \thetastar - \theta} 
  \geq \frac{ p \enorm{ \theta}^4}{2 + \enorm{ \theta}^2} 
  \geq \begin{cases} \frac{p}{4} \enorm{ \theta}^4, 
  \quad & \text{for} \ \enorm{ \theta} \leq \sqrt{2} 
    \\ \frac{p}{2} \parenth{\enorm{ \theta}^2 - 1}, \quad & \text{otherwise} \end{cases}.
\end{align*}
As a consequence, we achieve the conclusion of
claim~\eqref{eq:weak_conv_gaus}.


\subsubsection{Perturbation error between
  $\nabla \loglihoodgaus$ and $\nabla \loglihoodgaus_{n}$}
\label{subsec:uniform_perturb_mixgaus}

Direct calculation indicates the following equation:
\begin{align*}
  \nabla \loglihoodgaus_n (\theta) - \nabla \loglihoodgaus (\theta) =
  \frac{1}{n} \sum_{i = 1}^{n} X_{i} \tanh(X_{i}^{\top} \theta) - \Exs
  \brackets{ X \tanh \parenth{ X^{ \top} \theta}}.
\end{align*}
The outer expectation in the above display is taken with respect to $X
\sim \NORMAL(\thetastar, \sigma^2 I_{d})$ where $\thetastar =
0$. Based on the proof argument of Lemma 1 from the
paper~\cite{Raaz_Ho_Koulik_2018}, for each $r > 0$, we have the
following concentration inequality
\begin{align}
  \Prob \biggr( \sup_{ \theta \in \ball(\thetastar, r)} 
  \enorm{ \frac{1}{n} \sum_{i = 1}^{n} X_{i} \tanh(X_{i}^{\top} \theta) 
  - \Exs \brackets{ X \tanh \parenth{ X^{ \top} \theta}}} 
  & \nonumber \\
  & \hspace{- 8 em} \leq c r \sqrt{\frac{d + \log(1 / \delta)}{n}} \biggr) \geq 1 - \delta, \label{eq:empirical_process_gauss_fix_radius}
\end{align}
for any $\delta > 0$ as long as the sample size $n \geq c' d \log(1/
\delta)$ where $c$ and $c'$ are universal constants. For any $M \in
\mathbb{N}_{+}$, by the concentration
bound~\eqref{eq:empirical_process_gauss_fix_radius} and the union
bound, we find that
\begin{align} 
\Prob \biggr( \forall r \in [2^{-M}, 1], ~ \sup_{ \theta \in
  \ball(\thetastar, r)} \vecnorm{\nabla \loglihoodgaus_n (\theta) -
  \loglihoodgaus (\theta)}{2} & \nonumber \\
  & \hspace{- 4 em} \leq \unicon \; r \; \sqrt{\frac{d +
    \log (M/\delta)}{n}} \biggr) \geq 1 - \delta. \label{eq:empirical_gauss_first} 
\end{align}

On the other hand, based on the standard inequality $\abss{ \tanh(x)}
\leq \abss{ x}$ for all $x \in \Rspace$, we find that
\begin{align*}
  \enorm{\nabla \loglihoodgaus_n (\theta) 
  - \nabla \loglihoodgaus (\theta) } 
  & \leq \frac{1}{n} \sum_{i = 1}^{n} \enorm{ X_{i}} \abss{ 
  \tanh \parenth{ X_{i}^{ \top} \theta}} + \Exs \brackets{ 
  \enorm{X} \abss{ \tanh \parenth{ X^{ \top} \theta}}} \\
  & \leq \frac{1}{n} \sum_{i = 1}^{n} \enorm{ X_{i}} \abss{ 
  X_{i}^{ \top} \theta} + \Exs \brackets{ \enorm{X} \abss{ X^{ \top} 
  \theta}} \\
  & \leq \parenth{ \frac{1}{n} \sum_{i = 1}^{n} \enorm{ X_{i}}^2 
  + \Exs \brackets{ \enorm{X}^2}} \enorm{ \theta}.
\end{align*}
Therefore, we have $\enorm{\nabla \loglihoodgaus_n (\theta) 
- \nabla \loglihoodgaus (\theta) } 
\leq 2 d \vecnorm{\theta}{2} \log (1/ \delta)$ 
with probability $ 1 - \delta$. By choosing 
$M_1 : = \log (2nd)$, based on the previous bound, we obtain that
\begin{align}
    \Prob \left( \forall r < 2^{- M_1}, ~\sup_{ \theta \in \ball(\thetastar, r)} 
    \enorm{\nabla \loglihoodgaus_n (\theta) - \nabla 
    \loglihoodgaus (\theta) } \leq \frac{ \log (1/ \delta)}{n} \right) 
    \geq 1 - \delta. \label{eq:empirical_gauss_second}
\end{align}
Furthermore, for vector $\theta \in \Rspace^{d}$ with large norm, by
the concentration bound~\eqref{eq:empirical_process_gauss_fix_radius}
combined with the union bound, for any $M' \in \mathbb{N}_{+}$, we find
that
\begin{align*}
  \mathbb{P} \biggr( \forall r \in [1, 2^{M'}], \sup_{\theta \in \ball
    (\thetastar, r)} \vecnorm{\nabla \loglihoodgaus_n (\theta) -
    \loglihoodgaus (\theta)}{2} & \\
    & \hspace{ - 3 em} \leq \unicon \; r \; \sqrt{\frac{d +
      \log (M'/ \delta)}{n}} \biggr) \geq 1 -\delta.
\end{align*}
When $r$ in the above bound is too large, we can simply use the fact
that $\tanh$ is a bounded function. We thus have the upper bound
\begin{align*}
\vecnorm{\nabla \loglihoodgaus_n (\theta) - \nabla \loglihoodgaus
  (\theta))}{2} \leq \mathbb{E} \brackets{\vecnorm{X}{2}} +
\frac{1}{n} \sum_{i = 1}^n \vecnorm{X_i}{2},
\end{align*}
for any $\theta$. Given the above bound, by choosing $M_2 \defn \log
(2 \sqrt{n})$, we obtain that
\begin{align}
 & \Prob \left( \forall r > 2^{M_2}, \sup_{ \theta \in
    \ball(\thetastar, r)} \vecnorm{ \nabla \loglihoodgaus_n (\theta) -
    \nabla \loglihoodgaus (\theta))}{2} \leq r \sqrt{\frac{d + \log(
      1/ \delta)}{n}} \right) \nonumber \\
  & \hspace{5 em} \geq \Prob \left( \mathbb{E} \brackets{
  \vecnorm{X}{2}} + \frac{1}{n} \sum_{i = 1}^n \vecnorm{X_i}{2} \leq
2^{M_2} \sqrt{ \frac{d + \log( 1/ \delta)}{n}} \right) \geq 1 -
\delta. \label{eq:empirical_gauss_third}
\end{align}
Putting the bounds~\eqref{eq:empirical_gauss_first},~\eqref{eq:empirical_gauss_second},
and~\eqref{eq:empirical_gauss_third} together, for $n \geq \unicon d
\log(1/ \delta)$, the following probability bound holds
\begin{align*}
  \Prob \biggr( \forall r > 0, \sup_{ \theta \in \ball(\thetastar, r)}
  \vecnorm{ \nabla \loglihoodgaus_n (\theta) - \nabla \loglihoodgaus
    (\theta))}{2} & \\
    & \hspace{- 6 em} \leq \unicon \; r \; \sqrt{\frac{d + \log
      \parenth{\log n/ \delta}}{n}} + \frac{\log (1 /
    \delta)}{n} \biggr) \geq 1 - \delta,
\end{align*}
which completes the proof of the claim~\eqref{eq:empi_process_gaus}.

\subsection{Proof of~\cref{cor:general-gaussian-mixture}}

\label{app:subsec-proof-general-gaussian-mixture}

We prove this claim by verifying the conditions
in~\cref{cor:final-nonconvex}
and~\cref{thm:non-asymp-credible-set}. In particular, we claim the
following bounds on the population log-likelihood $\loglihood (\theta)
= \Exs \left[ \log p_\theta(X) \right]$ and its empirical counterpart
$\loglihood_\numobs(\theta) = \frac{1}{\numobs} \sum_{i = 1}^\numobs
\log p_\theta (X_i)$: For each permutation function $\sigma$ and $R_0
> 0$, we have
\begin{subequations}
  \begin{align}
\opnorm{\nabla^2 \loglihood (\theta) - \nabla^2 \loglihood
  (\thetastar_\sigma)} &\leq c K \left( \vecnorm{\theta -
  \thetastar_\sigma}{2}^3 + \sigma_X^3 \right) \vecnorm{\theta -
  \thetastar_\sigma}{2}^, \quad \forall \theta \in \real^{d
  K}, \label{eq:k-mixture-hessian-lip} \\
\sup_{\theta \in \ball (\thetastar_\sigma, R_0)} \vecnorm{\nabla
  \loglihood_\numobs (\theta) - \nabla \loglihood (\theta)}{2} & \leq
c K (\sigma_X + R_0) \sqrt{\frac{K d \log (K d) + \log
    \delta^{-1}}{\numobs}}, \quad \mbox{w.p. } 1 -
\delta, \label{eq:k-mixture-grad-emp-proc} \\
\sup_{\theta \in \ball (\thetastar_\sigma, R_0)} \opnorm{\nabla^2
  \loglihood_\numobs (\theta) - \nabla^2 \loglihood (\theta)} &\leq c
K^2 (\sigma_X^2 + R_0^2) \sqrt{\frac{K d \log (K d) +
    \log(1/\delta)}{\numobs}}\quad \mbox{w.p. } 1 -
\delta, \label{eq:k-mixture-hessian-emp-proc} \\
\posterior \Big( \ball^{c} \big(0, 3 \sqrt{K} R_X + & \sqrt{K (d \log
  (R_X \numobs + \smoothprior) + \log (\prior_0 (1/\vartheta) )} \big)
\mid \DataX \Big) < \vartheta,
\label{eq:k-mixture-far-tail}     
    \end{align}
\end{subequations}
where $R_X \mydefn \max_{i} \vecnorm{X_i}{2}$.  The proofs of these
bounds are deferred to later subsections.  Taking the four bounds as
given, we now proceed with the proof of the corollary.

First, we define $\localradius \mydefn \frac{\mu}{4 c'K (\sigma_X^3 +
  1)} \wedge \big( \frac{\mu}{4 c' K} \big)^{1/4} \wedge 1$, for any
permutation function $\sigma$ and any $\theta \in \ball
(\thetastar_\sigma, \localradius )$. Then,
equation~\eqref{eq:k-mixture-hessian-lip} guarantees the following
local bound:
\begin{align*}
    - \nabla^2 \loglihood (\theta) \succeq - \nabla^2 \loglihood
    (\thetastar_{\sigma}) - \frac{\mu}{2} I_d \succeq \frac{\mu}{2}
    I_d,
\end{align*}
which implies the condition $- \inprod{\nabla \loglihood (\theta)}{\theta - \thetastar_\sigma} 
\geq \frac{\mu}{2} \vecnorm{\theta - \thetastar_\sigma}{2}^2$ inside the ball $\ball (\thetastar_\sigma, \localradius)$.

Combining this bound with equation~\eqref{eq:k-mixture-grad-emp-proc}
by taking $R_0 = \localradius$, we invoke~\cref{thm:local-weak-convex}
and obtain the following localized posterior contraction bound with
probability $1 - \delta$:
\begin{align}
    \posterior \big( \ball (\thetastar_\sigma, \localradius) \mid
    \DataX \big)^{-1} \posterior \left( \ball \left(
    \thetastar_\sigma, r_\numobs \right) \mid \DataX \right) \geq 1 -
    \vartheta,\label{eq:k-mixture-posterior-near-bound}
\end{align}
where the contraction radius $r_\numobs$ is given by:
\begin{align*}
    r_\numobs \mydefn \frac{c K \sigma_X}{\strongconvex} \sqrt{\frac{K
        d \log (K d) + \log \delta^{-1}}{\numobs}} + c
    \sqrt{\frac{\log \vartheta^{-1}}{\strongconvex \numobs}}.
\end{align*}
By the sub-Gaussian condition, we obtain that
\begin{align*}
    \max_i \vecnorm{X_i}{2} \leq 2\sigma_X \sqrt{d + \log \numobs +
      \log \delta^{-1}}, \quad \mbox{with probability } 1 - \delta.
\end{align*}
Therefore, the following tail bound holds true with probability $1 -
\delta$:
\begin{align}
     \posterior \Big( \ball^{c} \parenth{0, c \sigma_X \sqrt{K d \log
         \frac{\numobs}{\vartheta\delta \prior_0}} } \; \mid \; \DataX
     \Big) < \vartheta. \label{eq:k-mixture-posterior-far-bound}
\end{align}
Taking $R_0 = c \sigma_X \sqrt{K d \log \frac{\numobs}{\vartheta\delta
    \prior_0}}$ and applying
equation~\eqref{eq:k-mixture-grad-emp-proc}, we conclude that there
exists a quantity $a_0 > 0$ depending on the constants $K, d,
\sigma_X$, such that
\begin{align*}
    \sup_{\theta \in \ball (0, R_0)} \vecnorm{\nabla
      \loglihood_\numobs (\theta) - \nabla \loglihood_\numobs
      (\thetastar)}{2} \leq \frac{a_0\big(1 + \log \delta^{-1} + \log
      \vartheta^{-1}\big)}{\sqrt{\numobs}}.
\end{align*}
Note that, $\loglihood_\numobs (0) = - \frac{1}{\numobs} \sum_{i =
  1}^\numobs \vecnorm{X_i}{2}^2$.  An application of sub-exponential
concentration bounds~\cite{Wainwright_nonasymptotic} leads to
\begin{align*}
    \Prob \left( \abss{\loglihood_\numobs (0) - \loglihood (0)} >
    \sigma_X^2 \frac{c d \log \delta^{-1}}{\sqrt{\numobs}} \right) <
    \delta.
\end{align*}
Combining the previous two bounds, there exists $a_1 > 0$, such that
the following bound holds true with probability $1 - \delta$:
\begin{align*}
    \sup_{\theta \in \ball (0, R_0)} \abss{\loglihood_\numobs (\theta)
      - \loglihood_\numobs (\thetastar)} \leq \frac{a_1\big(1 + \log
      \delta^{-1} + \log \vartheta^{-1}\big)}{\sqrt{\numobs}}.
\end{align*}
On the other hand, since the Gaussian mixture model is identifiable up
to permutations, there exists $\Delta_0 > 0$ depending on
$\thetastar$, such that:
\begin{align*}
    \inf_{\theta \in \left( \bigcup_{\sigma: [K] \rightarrow [K]}
      \ball (\thetastar_\sigma, \localradius) \right)^{c}} \loglihood
    (\thetastar_{\mathrm{Id}}) - \loglihood (\theta) \geq \Delta_0.
\end{align*}
Consequently, for $\numobs \geq \left( \frac{3 a_0}{\Delta_0} \big(
\log \delta^{-1} + \log \vartheta^{-1} \big) \right)^2$, with
probability $1 - \delta$ we have that
\begin{align*}
    \loglihood_\numobs (\theta') \leq \loglihood_\numobs (\theta) -
    \frac{\Delta_0}{3}
\end{align*}
for all $\sigma: [K] \rightarrow [K], ~ \theta \in \ball \Big(\thetastar_\sigma, \sqrt{ \frac{\Delta_0}{d + \log \delta^{-1}} } \Big)$ 
and $\theta' \in \ball (0, R_0) \setminus \bigcup_{\sigma': [K] \rightarrow [K]} \ball (\thetastar_{\sigma'}, \localradius)$. Thus, we have the posterior probability bound:
\begin{align*}
    \posterior \left[\ball (0, R_0) \setminus \bigcup_{\sigma': [K] \rightarrow [K]} 
    \ball (\thetastar_{\sigma'}, \localradius) \mid \DataX \right] \leq \exp \left( - \frac{\Delta_0}{3} \numobs \right) \cdot \left( \frac{R_0 \sqrt{d + \log \delta^{-1}}}{ \sqrt{\Delta_0}} \right)^{d K}.
\end{align*}
It indicates that there exists $a_2 > 0$ depending on the problem instances $\thetastar$, $K$, $d$, 
such that for $\numobs \geq  \frac{a_2}{\Delta_0} \log \vartheta^{-1}$, we have the following bound:
\begin{align}
    \posterior \left[\ball (0, R_0) \setminus \bigcup_{\sigma': [K] \rightarrow [K]} 
    \ball (\thetastar_{\sigma'}, \localradius) \mid \DataX \right] \leq \vartheta.\label{eq:k-mixture-posterior-middle-bound}
\end{align}
Collecting the bounds~\eqref{eq:k-mixture-posterior-near-bound},~\eqref{eq:k-mixture-posterior-far-bound} and~\eqref{eq:k-mixture-posterior-middle-bound}, 
we conclude that for $\numobs \geq \numobs_{\min} \cdot \log^2 \frac{1}{\delta \vartheta}$, the following bound holds true with probability $1 - \delta$:
\begin{align*}
    \posterior \left[  \bigcup_{\sigma: [K] \rightarrow [K]} \ball (\thetastar_{\sigma}, r_\numobs) \mid \DataX \right] \geq 1 - \vartheta,
\end{align*}
for contraction radius $r_\numobs$ defined as:
\begin{align*}
    r_\numobs \mydefn \frac{c K \sigma_X}{\strongconvex} \sqrt{\frac{K d \log (K d) + \log \delta^{-1}}{\numobs}} + c \sqrt{\frac{\log \vartheta^{-1}}{\strongconvex \numobs}},
\end{align*}
which proves the bound~\eqref{eq:general-mixture-posterior-contraction}.

Furthermore, applying~\cref{thm:non-asymp-credible-set} to each local
neighborhood $\ball (\thetastar_\sigma, \localradius)$, for any
$\offpar \in (0, 1)$, we obtain the following bound with probability
$1 - \delta$:
\begin{multline*}
    \posterior \left[ \vecnorm{\theta -
        \thetamap_\sigma}{\HessianStar_\sigma}^2 \leq (1 + \offpar)
      \frac{d}{n} + c \frac{1 + \log \kappa
        (\HessianStar_\sigma)}{\offpar} \Big( \frac{\log
        \vartheta^{-1}}{\numobs} + \frac{a' (\log \vartheta^{-1} +
        \log \delta^{-1})^2}{\numobs^2} \Big) \mid \DataX \right]
    \\ \geq (1 - \vartheta) \posterior \left( \ball
    (\thetastar_\sigma, \localradius) \mid \DataX\right),
\end{multline*}
for a constant $a' > 0$ depending on $K, d$ and $\thetastar$.

Combining with the tail
bounds~\eqref{eq:k-mixture-posterior-far-bound}
and~\eqref{eq:k-mixture-posterior-middle-bound}, we obtain the
result~\eqref{eq:general-mixture-bvm-type-contraction}.

\subsubsection{Proof of the claim~\eqref{eq:k-mixture-hessian-lip}}

We first verify the local
conditions~\ref{item:without_global_geometry}
and~\ref{item:without_global_deviation}. Given the parameters
$(\gausscenter_1, \gausscenter_2, \cdots, \gausscenter_K)$, direct
calculation yields
\begin{align*}
    - \nabla_\theta \loglihood_\numobs (\theta) = \left[
      \frac{1}{\numobs} \sum_{i = 1}^n (\gausscenter_j - X_i)
      \frac{\exp \left( - \vecnorm{\gausscenter_j - X_i}{2}^2 / 2
        \right)}{\sum_{\ell = 1}^K\exp \left( -
        \vecnorm{\gausscenter_\ell - X_i}{2}^2 / 2 \right)}\right]_{j
      \in [K]}.
\end{align*}
Given distinct centers $(\gausscenter_j)_{j \in [K]}$ of each mixture
component, we have that $\HessianStar_\sigma \succ 0$ for any
permutation $\sigma$. To show the local growth
condition~\ref{item:without_global_geometry}, we study the local
conditions around $\thetastar_\sigma$. Denote $q_j (x; \theta) \mydefn
\frac{\exp \left( - \vecnorm{\gausscenter_j - x}{2}^2 / 2
  \right)}{\sum_{\ell = 1}^K\exp \left( - \vecnorm{\gausscenter_\ell -
    x}{2}^2 / 2 \right)} $ for any $x \in \real^{d}$ and $j \in
     [K]$. Direct calculation shows that
\begin{align*}
   - \nabla_\theta^2 \log p_\theta (X) = \mathrm{diag} \left(\big( I_d + (\gausscenter_j - X) (\gausscenter_j - X)^\top \big) q_j (X; \theta)  \right)_{j \in [K]} & \\
   & \hspace{-10 em} - \left[ (\gausscenter_j - X) (\gausscenter_\ell - X)^\top q_j (X; \theta) q_\ell (X; \theta) \right]_{j , \ell \in [K]}.
\end{align*}
For the third-order derivative, for any vector $v = \left[ v_1 ~ v_2 ~ \cdots v_K \right] \in \sphere^{K d - 1}$, direct calculation leads to
\begin{align*}
    &\opnorm{\nabla^3 \loglihood (\theta) [v]} \\
    &\leq \opnorm{ \begin{bmatrix} \opnorm{\Exs \left[ \nabla_{\gausscenter_\ell} q_j (X; \theta) \left( I_d + (\gausscenter_j - X) (\gausscenter_j - X)^\top \right)v_j \right]} \end{bmatrix}_{j, \ell \in [K]}} \\
    &\qquad +  \max_{j \in [K]} \abss{\Exs \left[ q_j (X; \theta) (\gausscenter_j - X)^\top v_j \right]} + \max_{j \in [K]} \abss{\Exs \left[ q_j (X; \theta) \sum_{\ell \in [K]} q_\ell (X; \theta) (\gausscenter_\ell - X)^\top v_\ell \right]}\\
    &\qquad + \opnorm{\left[ \opnorm{\Exs \left[ q_j (X; \theta) q_\ell (X; \theta) v_\ell (\gausscenter_\ell - X)^\top \right]} \right]_{j, \ell \in [K]}}\\
    &\qquad \opnorm{\left[ \opnorm{\sum_{\ell \in [K]} \Exs \left[  q_\ell (X; \theta) (\gausscenter_j - X) (\gausscenter_\ell - X)^\top v_\ell \nabla_{\gausscenter_k} q_j (X; \theta) \right]} \right]_{j, k \in [K]}}\\
    &\qquad + \opnorm{\left[ \opnorm{\sum_{\ell \in [K]} \Exs \left[  q_j (X; \theta) (\gausscenter_j - X) (\gausscenter_\ell - X)^\top v_\ell \nabla_{\gausscenter_k} q_\ell (X; \theta) \right]} \right]_{j, k \in [K]}}.
\end{align*}
Using H\"{o}lder inequality and the variational representation of the operator norm, we obtain that
\begin{align*}
    &\opnorm{\nabla^3 \loglihood (\theta) [v]}\\
    &\leq c K \cdot \sup_{\stackrel{y, z \in \sphere^{d - 1}}{ j, k, \ell \in [K]}} \Exs \left[\abss{ (X - u_j)^\top y \cdot (X - u_k)^\top z \cdot (X - u_\ell)^\top v_\ell } \right]  \\
    & \hspace{18 em} + c K \cdot \sup_{\stackrel{y, z \in \sphere^{d - 1}}{\ell \in [K]}} \Exs \left[\abss{ y^\top z (X - u_\ell)^\top v_\ell } \right]\\
    &\leq c K \cdot \sup_{\stackrel{y, z \in \sphere^{d - 1}}{ j, k, \ell \in [K]}} \Exs \left[\abss{ (X - u_j)^\top y }^3 \right]^{1/3} \cdot  \Exs \left[\abss{ (X - u_k)^\top z }^3 \right]^{1/3} \cdot  \Exs \left[\abss{ (X - u_\ell)^\top v_\ell }^3 \right]^{1/3} \\
    & \hspace{18 em} + c K \cdot \sup_{\stackrel{y, z \in \sphere^{d - 1}}{\ell \in [K]}} \Exs \left[ \big( (X - u_\ell)^\top v_\ell \big)^2 \right]^{1/2}\\
    &\leq c' K \left( \vecnorm{\theta - \thetastar_{\sigma}}{2}^3 + \sigma_X^3 + 1\right),
\end{align*}
for a universal constant $c' > 0$ and any permutation function $\sigma$. This proves the desired claim.

\subsubsection{Proof of the claims~\eqref{eq:k-mixture-grad-emp-proc} and~\eqref{eq:k-mixture-hessian-emp-proc}}
Now we turn to the empirical process bounds for the gradient and Hessian of $\loglihood_\numobs$. For $\theta \in \real^{d K}$ and $v, w \in \sphere^{d K - 1}$, we define the following quantities
\begin{align*}
  Y_{\theta, v}^{(1)} &\mydefn \inprod{\nabla F_\numobs   (\theta)}{v} 
  = \frac{1}{\numobs} \sum_{j \in [K]} \sum_{i = 1}^\numobs (\gausscenter_j - X_i)^\top v_j q_j (X_i, \theta) ,\qquad \mbox{and}\\
  Y_{\theta, v, w}^{(2)} &\mydefn v^\top \nabla^2 F_\numobs   (\theta)  w \\
  &= \frac{1}{\numobs} \sum_{j = 1}^K \sum_{i = 1}^\numobs  \big( v^\top w + v_j^\top (\gausscenter_j - X_i) w_j^\top (\gausscenter_j - X_i) \big) q_j (X_i; \theta)\\
  &\qquad -  \frac{1}{\numobs}  \sum_{j, \ell \in [K]}  \sum_{i = 1}^\numobs  v_j^\top (\gausscenter_j - X_i) \cdot w_\ell^\top (\gausscenter_\ell - X_i) \cdot q_j (X_i, \theta)  q_\ell (X_i, \theta).
\end{align*}
We further define $Z_{\theta, v}^{(1)} \mydefn Y_{\theta, v}^{(1)} - \Exs \left[ Y_{\theta, v}^{(1)} \right]$ and $Z_{\theta, v, w}^{(2)} \mydefn Y_{\theta, v}^{(2)} - \Exs \left[ Y_{\theta, v}^{(2)} \right]$.

In the following derivation, we first regard the vectors $v, w$ as
fixed, and then use standard discretization approach to take the
maximum with respect to both vectors.  Similar to the proof
of~\cref{cor:logit_regres}, we use~\cref{prop:adamczak} to control the
concentration behavior of the above quantities.  Note that, $q_j$ is a
bounded function for each $j \in [K]$.  Therefore, by
applying~\cref{prop:adamczak} to each term of $Z_{\theta, v}^{(1)}$
with envelop function $\bar{G}^{(1)} (X) = 1 + R_0 +
|(\gausscenterstar_j - X)^\top v_j|$ for each $j \in [K]$, we obtain
the following bound with probability $1 - \delta$:
\begin{align*}
    \sup_{\theta \in \ball (\thetastar_\sigma, R_0)} Z_{\theta,
      v}^{(1)} \leq 2 \Exs \left[ \sup_{\theta \in \ball
        (\thetastar_\sigma, R_0)} Z_{\theta, v}^{(1)} \right] + K (1 +
    R_0 + \sigma_X) \left( \sqrt{\frac{\log \delta^{-1}}{\numobs}} +
    \frac{\log \delta^{-1}}{\numobs} \sqrt{\log \numobs} \right).
\end{align*}
Similarly, by applying~\cref{prop:adamczak} to each term of
$Z_{\theta, v}^{(2)}$ with envelop function $\bar{G}^{(2)} (X) = 1 +
R_0 \cdot \left( \abss{v_j^\top (\gausscenterstar_j - X_i) } + \abss{
  w_\ell^\top (\gausscenterstar_\ell - X_i)} \right) + R_0^2 +
\abss{v_j^\top (\gausscenterstar_j - X_i) \cdot w_\ell^\top
  (\gausscenterstar_\ell - X_i)}$, for each $j, \ell \in [K]$, we
obtain the following bound with probability $1 - \delta$:
\begin{align*}
    \sup_{\theta \in \ball (\thetastar_\sigma, R_0)} Z_{\theta, v,
      w}^{(2)} \leq 2 \Exs \left[ \sup_{\theta \in \ball
        (\thetastar_\sigma, R_0)} Z_{\theta, v, w}^{(2)} \right] + K^2
    (1 + R_0^2 + \sigma_X^2 ) \left( \sqrt{\frac{\log
        \delta^{-1}}{\numobs}} + \frac{\log \delta^{-1}}{\numobs} \log
    \numobs \right).
\end{align*}

Now, we consider the function classes
\begin{align*}
    \mathcal{G}^{(1)}_{v, j} &\mydefn \left\{ x \mapsto \inprod{\nabla
      \log p_\theta (x)}{v_j}: \theta \in \ball (\thetastar_\sigma,
    R_0) \right\}, \quad \mbox{and}\\ \mathcal{G}^{(2)}_{v, w, j,
      \ell} &\mydefn \left\{ x \mapsto \inprod{\nabla^2 \log p_\theta
      (X_i) v_j}{w_\ell}: \theta \in \ball (\thetastar_\sigma, R_0)
    \right\}.
\end{align*}
Apparently, $\bar{G}^{(1)}$ and $\bar{G}^{(2)}$ are envelop functions
for the corresponding classes $\mathcal{G}^{(1)}_{v, j}$ and
$\mathcal{G}^{(2)}_{v, w, j, \ell}$.  In order to bound the expected
suprema, we define the following symmetrized random variables:
\begin{align*}
    V_{\theta, v}^{(1)} = \frac{1}{\numobs} \sum_{i = 1}^\numobs
    \rade_i \inprod{\nabla \log p_\theta (X_i)}{v}, \quad \mbox{and}
    \quad V_{\theta, v, w}^{(2)} = \frac{1}{\numobs} \sum_{i =
      1}^\numobs \rade_i \inprod{\nabla^2 \log p_\theta (X_i) v}{w},
\end{align*}
for $\mathrm{i.i.d.}$ Rademacher random variables $(\rade_i)_{i =
  1}^\numobs$.  Standard symmetrization arguments imply that $\Exs
\left[ \sup_{\theta \in \ball (\thetastar_\sigma, R_0)} Z^{(i)}
  \right] \leq 2 \Exs \left[ \sup_{\theta \in \ball
    (\thetastar_\sigma, R_0)} V^{(i)} \right]$ for $i \in \{1,2\}$.

Let $P_\numobs \mydefn \frac{1}{\numobs} \sum_{i = 1}^\numobs
\delta_{X_i}$, we claim the following covering number bounds,
conditionally on the data $X_1^\numobs$:
\begin{subequations}
\begin{align}
   \bar{N}^{(1)} (t) \mydefn \abss{ \mathcal{N} \left(
     \mathcal{G}^{(1)}_{v, j}, \vecnorm{\cdot}{L^2 (P_\numobs)}, t
     \vecnorm{\bar{G}^{(1)}}{L^2 (P_\numobs)} \right) } & \nonumber
   \\ & \hspace{-5 em} \leq \left( \frac{c \sum_{k' = 1}^K
     \vecnorm{\gausscenterstar_{k'} - X_i}{2}^2 + c K (R_0^2 + 1)}{t}
   \right)^{K
     d}, \label{eq:k-mixture-covering-gradient}\\ \bar{N}^{(2)} (t)
   \mydefn \abss{ \mathcal{N} \left( \mathcal{G}^{(2)}_{v, w, j,
       \ell}, \vecnorm{\cdot}{L^2 (P_\numobs)}, t
     \vecnorm{\bar{G}^{(2)}}{L^2 (P_\numobs)} \right) } & \nonumber
   \\ & \hspace{-5 em} \leq \left( \frac{c K \sum_{k' = 1}^K
     \vecnorm{\gausscenterstar_{k'} - X_i}{2}^3 + c K^2 (R_0^3 +
     1)}{t} \right)^{K d}. \label{eq:k-mixture-covering-hessian}
\end{align}
\end{subequations}
By Dudley's chaining integral bound, we obtain the following bounds:
\begin{align*}
    \Exs \left[ \sup_{\theta \in \ball (\thetastar_\sigma, R_0)}
      V^{(1)} \right] & \leq \sqrt{\frac{1}{\numobs }\Exs \left[
        \bar{G}^{(1)} (X)^2 \right]} \int_0^1 \sqrt{1 + \Exs \left[
        \log \bar{N}^{(1)} (t) \right]} dt \\ & \leq c K (1 + R_0 +
    \sigma_X) \sqrt{\frac{K d \log (K d)}{\numobs}},\\ \Exs \left[
      \sup_{\theta \in \ball (\thetastar_\sigma, R_0)} V^{(2)} \right]
    &\leq \sqrt{\frac{1}{\numobs }\Exs \left[ \bar{G}^{(2)} (X)^2
        \right]} \int_0^1 \sqrt{1 + \Exs \left[ \log \bar{N}^{(2)} (t)
        \right]} dt \\ & \leq c K^2 (1 + R_0^2 + \sigma_X^2)
    \sqrt{\frac{K d \log (K d)}{\numobs}}.
\end{align*}
Combining with the concentration inequalities, we obtain the following
bounds with probability $1 - \delta$:
\begin{align*}
     \sup_{\theta \in \ball (\thetastar_\sigma, R_0)} Z_{\theta,
       v}^{(1)} & \leq c K (1 + R_0 + \sigma_X) \left[ \sqrt{\frac{K d
           \log (K d) + \log \delta^{-1}}{\numobs}} + \frac{\log
         \delta^{-1}}{\numobs} \sqrt{\log \numobs}
       \right],\\ \sup_{\theta \in \ball (\thetastar_\sigma, R_0)}
     Z_{\theta, v}^{(2)} & \leq c K^2 (1 + R_0^2 + \sigma_X^2) \left[
       \sqrt{\frac{K d \log (K d) + \log \delta^{-1}}{\numobs}} +
       \frac{\log \delta^{-1}}{\numobs} \log \numobs \right].
\end{align*}
Finally, by taking union bound over a maximal $\frac{1}{8}$-packing of
the sphere $\sphere^{d - 1}$, which has cardinality bounded by $17^{K
  d}$, for $\sigma_X \geq 1$ and $\frac{\numobs}{\log \numobs} \geq K
d \log \frac{K d}{\delta}$, we conclude that
\begin{align*}
     \sup_{\theta \in \ball (\thetastar_\sigma, R_0)} \vecnorm{\nabla
       \loglihood_\numobs (\theta) - \nabla \loglihood (\theta)}{2} &
     \leq c K (\sigma_X + R_0) \sqrt{\frac{K d \log (K d) + \log
         \delta^{-1}}{\numobs}},\\ \sup_{\theta \in \ball
       (\thetastar_\sigma, R_0)} \opnorm{\nabla^2 \loglihood_\numobs
       (\theta) - \nabla^2 \loglihood (\theta)} & \leq c K^2
     (\sigma_X^2 + R_0^2) \sqrt{\frac{K d \log (K d) + \log
         \delta^{-1}}{\numobs}},
\end{align*}
which proves the desired bounds in
claims~\eqref{eq:k-mixture-grad-emp-proc}
and~\eqref{eq:k-mixture-hessian-emp-proc}.

\paragraph{Proof of equations~\eqref{eq:k-mixture-covering-gradient} and~\eqref{eq:k-mixture-covering-hessian}} 

Given a positive number $\varepsilon'$ to be determined later, let
$\{\theta_1, \theta_2, \cdots, \theta_M\}$ be a minimal
$\varepsilon'$-covering of the parameter space $\ball
(\thetastar_\sigma, R_0)$.  By standard volume arguments, we have that
$M \leq \big(\frac{c}{\varepsilon'} \big)^{K d}$.

We bound the $L^2 (P_n)$ covering number by studying the Lipschitz
constant for the functions in these classes. Note that for each
$\theta \in \ball (\thetastar_\sigma, R_0)$, simple derivation yields
that:
\begin{align*}
    \vecnorm{\nabla_\theta \inprod{\nabla \log p_\theta (X_i)}{v_j}}{2} 
    & \leq c \sum_{k' = 1}^K \vecnorm{\gausscenterstar_{k'} - X_i}{2}^2 + c K (R_0^2 + 1),\quad \mbox{and}\\
     \vecnorm{\nabla_\theta \inprod{\nabla^2 \log p_\theta (X_i) v_j}{w_\ell}}{2} 
     & \leq c K \sum_{k' = 1}^K \vecnorm{\gausscenterstar_{k'} - X_i}{2}^3 + c K^2 (R_0^3 + 1).
\end{align*}
Note furthermore that $\bar{G}^{(1)} (x) \geq 1$ and $\bar{G}^{(2)} (x) \geq 1$ by definition. 
By taking $\varepsilon' \mydefn \frac{t}{c \sum_{k' = 1}^K \vecnorm{\gausscenterstar_{k'} - X_i}{2}^2 + c K (R_0^2 + 1)}$, 
the set $\{p_{\theta_i}: i \in [M]\}$ constitutes a $t$-packing of the set $\mathcal{G}^{(1)}_{v, j}$. We, therefore, have the bound
\begin{align*}
    \bar{N}^{(1)} (t) \leq \abss{\mathcal{N}  \left( \mathcal{G}^{(1)}_{v, j}, \vecnorm{\cdot}{L^{\infty} (P_\numobs)}, t  \right)} 
    \leq \left( \frac{c \sum_{k' = 1}^K \vecnorm{\gausscenterstar_{k'} - X_i}{2}^2 + c K (R_0^2 + 1)}{t} \right)^{K d}.
\end{align*}
Similarly, we have that
\begin{align*}
    \bar{N}^{(2)} (t) \leq \abss{\mathcal{N}  \left( \mathcal{G}^{(2)}_{v, w, j, \ell}, \vecnorm{\cdot}{L^{\infty} (P_\numobs)}, t  \right)} 
    \leq \left( \frac{c K \sum_{k' = 1}^K \vecnorm{\gausscenterstar_{k'} - X_i}{2}^3 + c K^2 (\localradius^3 + 1)}{t} \right)^{K d},
\end{align*}
which proves the claim in equations~\eqref{eq:k-mixture-covering-gradient} and~\eqref{eq:k-mixture-covering-hessian}.

\subsubsection{Proof of the claim~\eqref{eq:k-mixture-far-tail}}

For the global condition, we use an argument slightly different from
the third condition in~\cref{cor:final-nonconvex}. Note that for
$\theta = [\gausscenter_j]_{j \in [K]}$, the log-likelihood function
takes the form
\begin{align*}
    \loglihood_\numobs \big( [\gausscenter_j]_{j \in [K]} \big) 
    = \frac{1}{\numobs} \sum_{i = 1}^\numobs \log \left( \sum_{j = 1}^K \exp \big( - \vecnorm{\gausscenter_j - X_i}{2}^2 / 2 \big) \right) - \frac{\log(2 \pi) d}{2} - \log (K).
\end{align*}

Given $\DataX$, we denote $R_X \mydefn \max_{i \in [\numobs]}
\vecnorm{X_i}{2}$, and define the compact set
\begin{subequations}
\begin{align}
 U(r) \mydefn \left\{ [\gausscenter_j]_{j \in [K]}: \vecnorm{u_j}{2}
 \leq r \quad \mbox{for all $j \in [K]$} \right\}.
\end{align}
Now, we claim that for all $t > 0$
\begin{align}
   \label{eq:tail-at-infinity-k-mixtures}  
  \posterior \left( U^{c} (3R_X + \sqrt{d + t}) \mid \DataX \right) \leq
  \left( c (R_X + 1) \sqrt{\numobs} + \sqrt{\smoothprior}
  \right)^{d/2} \prior_0^{-1} e^{- t / 2}.
\end{align}
\end{subequations}
Taking this claim as given, by choosing $t = c d \log (R_X \numobs +
\smoothprior) + c \log \frac{1}{\prior_0 \vartheta}$, we have the tail
bound $\posterior \left( \ball^{c} (0, 3 \sqrt{K} R_X + \sqrt{K (d +
  t)}) \mid \DataX \right) < \vartheta$. As a consequence, we obtain the conclusion of claim~\eqref{eq:k-mixture-far-tail}.


\subsubsection{Proof of the claim~\eqref{eq:tail-at-infinity-k-mixtures}}

Given $\theta = [\gausscenter_j]_{j \in [K]}$, if $\theta \notin U (3
R_X)$, there exists $j_0 \in [K]$ such that
$\vecnorm{\gausscenter_{j_0}}{2} > 3 R_X$. We note that for each $i
\in [\numobs]$, we have:
\begin{align*}
    \exp \left( - \frac{1}{2} \vecnorm{\gausscenter_{j_0} - X_i}{2}^2
    \right) & < \exp \left( - \frac{1}{2} (3 R_X - \vecnorm{ X_i}{2})^2
    \right) \\
    & \leq \exp \left( - \frac{1}{2} (2 R_X)^2 \right) < \exp
    \left( - \frac{1}{2} \vecnorm{X_1 - X_i}{2}^2 \right).
\end{align*}
Therefore, we obtain
\begin{align*}
    \loglihood_\numobs ([\gausscenter_j]_{j \in [K]}) < \loglihood_\numobs \left([\gausscenter_1, \cdots, \gausscenter_{j_0 - 1}, X_1, \gausscenter_{j_0 + 1}, \cdots, \gausscenter_K] \right).
\end{align*}
Consequently, we can replace any $\gausscenter_j$ whose norm is larger than $3 R_X$ with $X_1$, and increase the log-likelihood function. 
The global maximum of the $\loglihood_\numobs$ is therefore attained only in the set $U (3 R_X)$.

On the other hand, for any $\thetamap_\sigma \in \arg\max_{\theta \in U (3 R_X)} \loglihood_\numobs (\theta)$, we have that
\begin{align*}
    \opnorm{\nabla^2 \loglihood_\numobs (\theta)} \leq \frac{1}{\numobs} 
    \sum_{i = 1}^\numobs \left( \vecnorm{u_j - X_i}{2}^2 + 1 \right) \leq 16 R_X^2 + 1.
\end{align*}
Taking the local radius $r \mydefn \frac{1}{(4 R_X + 1) \sqrt{n} } \wedge \frac{1}{\sqrt{\smoothprior}}$, we have the lower bound
\begin{align*}
    \int_{\ball (\thetamap_\sigma, r)} e^{n \loglihood_\numobs (\theta)} \prior (d \theta) 
    & \geq \prior_0 \cdot \mathrm{Vol} \big( \ball (\thetamap_\sigma, r) \big) e^{\numobs \loglihood_\numobs (\thetamap_\sigma)} \cdot e^{- \smoothprior r^2 / 2} \cdot e^{- \numobs (16 R_X^2 + 1) r^2 / 2}\\
    &\geq 4 \prior_0 \big( c r \big)^d e^{\numobs \loglihood_\numobs \big( \thetamap_\sigma \big)},
\end{align*}
for a universal constant $c > 0$.

On the other hand, for any $t > 0$, we have that:
\begin{align*}
    \int_{\ball^{c} (\thetamap_\sigma, 3 R_X + \sqrt{d + t})} e^{n \loglihood_\numobs (\theta)} \prior (d \theta) 
    \leq e^{\numobs \loglihood_\numobs (\thetamap_\sigma)}  \int_{\ball^{c} (\thetamap_\sigma, t \sqrt{d})} 
    \prior (d \theta) \leq e^{\numobs \loglihood_\numobs (\thetamap_\sigma)} \cdot e^{- t / 2}.
\end{align*}
Consequently, we have the upper bound on the posterior tail probability:
\begin{align*}
    \posterior \left( \ball (0, 3R_X + \sqrt{d + t}) \mid \DataX\right) 
    & \leq \left(\int_{\ball (\thetamap_\sigma, r)} e^{n \loglihood_\numobs (\theta)} \prior (d \theta) \right)^{-1} \\
    & \hspace{6 em} \times \int_{\ball^{c} (\thetamap_\sigma, 3 R_X + \sqrt{d + t})} e^{n \loglihood_\numobs (\theta)} \prior (d \theta)\\
    &\leq  \left( c (R_X + 1) \sqrt{\numobs} + \sqrt{\smoothprior} \right)^{d/2} \prior_0^{-1} e^{- t / 2},
\end{align*}
for some universal constant $c > 0$. Therefore, we obtain the conclusion of claim~\eqref{eq:tail-at-infinity-k-mixtures}. 

\subsection{Proof of~\cref{cor:ibragimov}} \label{subsec:proof-ibragimov}

We first invoke~\cref{thm:local-weak-convex} in a small local
neighborhood of $\thetastar = 0$.  We claim that there exist constants
$q_1, q_2, q_3, R_0- > 0$ that depend on the density function $f$ but
independent of $\numobs$ and $a_\numobs$, such that:
\begin{subequations}
\begin{align}
    - \inprod{\theta}{\nabla \smoothloglihood (\theta)} 
    & \geq q_1 |\theta|^{1 + 2 \beta} - q_2 a_\numobs^{1 + 2 \beta}, 
    \quad \mbox{for } \theta \in (- \localradius/2, \localradius/2), \label{eq:singular-models-geometry-bound}\\
    \sup_{\theta \in [-1, 1]} \abss{\nabla \smoothloglihood (\theta) - \nabla \smoothloglihood_\numobs (\theta)} 
    & \leq q_3 \left( a_\numobs^{\beta - 1/2} \sqrt{\frac{\log \numobs / \delta}{\numobs}} 
    + a_\numobs^{\beta - 1} \frac{\log \numobs / \delta}{\numobs} \right), \label{eq:singular-models-deviation-bound}
\end{align}
\end{subequations}
with probability $1 - \delta$.  Assume that the above claims are given
at the moment (their proofs are given in
Appendices~\ref{subsec:local_singularity}
and~\ref{subsec:deviation_bound_singularity}).
Invoking~\cref{thm:local-weak-convex}, there exists a universal
constant $c > 0$, such that:
\begin{align*}
    \widetilde{\posterior} \left( \ball (\thetastar, \localradius / 4)
    \mid \DataX\right)^{-1} \widetilde{\posterior} \left( \ball
    (\thetastar,c \cdot r_\numobs) \mid \DataX \right) \geq 1 -
    \vartheta,
\end{align*}
where the scalar $r_\numobs$ is given by
\begin{align*}
r_\numobs = q_2^{\frac{1}{1 + 2 \beta}} a_\numobs + \left(\frac{1 +
  \log (1/\vartheta)}{\numobs} \right)^{\frac{1}{1 + 2 \beta}} +
\left( \frac{q_3}{q_1} \cdot \Big( a_\numobs^{\beta - 1/2}
\sqrt{\frac{\log (\numobs/\delta)}{\numobs}} + a_\numobs^{\beta - 1}
\frac{\log (\numobs/\delta)}{\numobs} \Big) \right)^{\frac{1}{2
    \beta}} & \\
    & \hspace{-5 em} + \left( \frac{1}{q_3 \numobs} \right)^{\frac{1}{2
    \beta}}.
\end{align*}
Taking $a_\numobs = \numobs^{- \frac{1}{1 + 2 \beta}}$, we conclude
that
\begin{align}
\label{eq:singular-density-models-contraction-rate}  
 r_\numobs \leq q' \cdot \numobs^{- \frac{1}{1 + 2 \beta}} \left(
 \log^{\frac{1}{2 \beta}} (\numobs/\delta) + \log^{\frac{1}{1 + 2
     \beta}} (1/\vartheta) \right),
\end{align}
where the constant $q' > 0$ depends on $q_1, q_2, q_3$ and $\beta$.

It remains to lower bound the posterior probability in a small ball
$\ball (0, \localradius / 4)$.  We claim that there exists a constant
$\gap > 0$ depending on $\localradius$ and $f$, such that there exists
a constant $q_0 > 0$ depending on the function $f$ and the quantities
$\localradius, \gap$, when $\numobs \geq q_0 \log^{ 1 + \frac{1}{2
    \beta}} \delta^{-1}$, the following bound holds true with
probability $1 - \delta$:
\begin{align}
    \label{eq:singular-models-global-gap-bound}  
    \sup_{\theta \in \ball (0, 1) \setminus \ball (0, \localradius /
      4)} \smoothloglihood_\numobs (\theta) < \inf_{|\theta'| <
      a_\numobs}\smoothloglihood (\theta') - \frac{1}{2} \gap.
\end{align}

Taking this bound as given, we proceed with the proof of this
corollary. In order to bound the smoothed posterior probability
outside the ball $\ball(0, \localradius/ 4)$, we note that:
\begin{align*}
& \hspace{-6 em} \widetilde{\posterior} \left( \ball (0, 1) \setminus
  \ball(0, \localradius/ 4) \mid \DataX \right) \\
& \leq \widetilde{\posterior} \left( \ball (0, a_\numobs)\mid \DataX
  \right)^{-1} \widetilde{\posterior} \left( \ball (0, 1) \setminus
  \ball(0, \localradius/ 4) \mid \DataX \right)\\
& \leq \frac{ \sup_{\theta \in \ball (0, 1) \setminus \ball (0,
      \localradius / 4)} \exp \left( \numobs \smoothloglihood_\numobs
    (\theta) \right)} {2 a_\numobs \cdot \inf_{|\theta| < a_\numobs}
    \exp \left( \numobs \smoothloglihood (\theta) \right) \cdot
    \inf_{|\theta| < a_\numobs} \prior (\theta)} \\
& \leq \frac{1}{2 a_\numobs \prior (0) e^{- \boundconsprior}} \cdot
  \exp \left( - \frac{\gap \numobs}{2} \right).
\end{align*}
Given $\numobs \geq \frac{2}{\gap} \left( \boundconsprior + c \log
\frac{\numobs}{\vartheta \prior (0)} \right)$, for a prior density
$\prior$ supported on the interval $[-1, 1]$, we have
that
\begin{align*}
\widetilde{\posterior} \left( \ball(0, \localradius/ 4)^C \mid \DataX
\right) = \widetilde{\posterior} \left( \ball (0, 1) \setminus
\ball(0, \localradius/ 4) \mid \DataX \right) \leq \vartheta.
\end{align*}
Therefore, for $\numobs \geq q_0 \log^{1 + \frac{1}{2 \beta}}
\delta^{-1}$, we conclude that
\begin{align*}
    \widetilde{\posterior} \left( \ball (0, c r_\numobs) \mid \DataX
    \right) \geq 1 - 2 \vartheta
\end{align*}
with probability at least $1 - \delta$, where $r_\numobs$ was defined
in equation~\eqref{eq:singular-density-models-contraction-rate}.

\subsubsection{Proof of claim~\eqref{eq:singular-models-global-gap-bound}}

We first prove the result for the original population-level
log-likelihood $\loglihoodsing$, and then show that the smoothing does
not affect the gap up to constant factors.  Finally we show the
sample-level version using the deviation
bound~\eqref{eq:singular-models-deviation-bound}.

Denote the density function $f_\theta(x) \mydefn f (x - \theta)$.  We
note that $$\loglihoodsing (0) - \loglihoodsing (\theta) =
\kull{f}{f_\theta} \geq 0$$ for any $\theta \in \real$.  Furthermore,
the function $\loglihoodsing$ is continuous in the interval $[-1, 1]$.
On the compact set $[-1, - \localradius/ 4] \cup [\localradius / 4,
  1]$, the maximum point of the continuous function $\loglihoodsing$
is attainable, i.e.,
\begin{align*}
    \exists \theta_0 \in [-1, - \localradius/ 4] \cup [\localradius /
      4, 1], \quad \mathrm{s.t.}~ \loglihoodsing (\theta_0) =
    \sup_{\theta \in [-1, - \localradius/ 4] \cup [\localradius / 4,
        1]} \loglihoodsing (\theta).
\end{align*}
Since $f \neq f_{\theta_0}$, we have that $\kull{f}{f_{\theta_0}} >
0$.  We define $\gap \mydefn \frac{1}{2} \kull{f}{f_{\theta_0}}$.

On the other hand, since the function $\loglihoodsing$ is continuous
on the compact set $[-2, 2]$, by Heine-Cantor theorem,
$\loglihoodsing$ is also uniformly continuous on $[-2, 2]$, i.e.,
\begin{align*}
    \lim_{\delta \rightarrow 0^+} \sup_{\theta, \theta' \in [-2, 2], ~
      \abss{\theta - \theta'} \leq \delta} \abss{F(\theta) -
      F(\theta')} = 0.
\end{align*}
So there exists $\delta_0 > 0$, such that when $\abss{\theta -
  \theta'} < \delta_0$ for some $\theta, \theta' \in [-2, 2]$, we have
that $$\abss{F(\theta) - F(\theta')} \leq \frac{1}{2} \gap.$$
Consequently, for $\numobs$ large enough such that $a_\numobs =
\numobs^{- \frac{1}{1 + 2 \beta}} < \delta_0 / 2$, for any $\theta \in
       [-1, - \localradius/ 4] \cup [\localradius / 4, 1]$, we have
       the following bound:
\begin{align*}
    \smoothloglihood (\theta) & \leq \loglihoodsing (\theta) +
    \sup_{\theta' \in [\theta - a_\numobs, \theta + a_\numobs]}
    \abss{F(\theta) - F(\theta')} \leq \loglihoodsing (\theta_0) +
    \frac{1}{2} \gap \\ & \leq \loglihoodsing (0) - 2 \gap +
    \frac{1}{2} \gap \\ & \leq \inf_{|\theta'| < a_\numobs}
    \smoothloglihood (\theta') - \frac{3}{2} \gap + \sup_{\theta' \in
      [- a_\numobs, a_\numobs]} \abss{F(0) - F(\theta')} \\ & \leq
    \inf_{|\theta'| < a_\numobs}\smoothloglihood (\theta') - \gap.
\end{align*}

For the sample version, we note that the
bound~\eqref{eq:singular-models-deviation-bound} implies the following
inequality with probability $1 - \delta$:
\begin{align*}
    & \hspace{- 6 em} \sup_{\theta, \theta' \in [-1, 1]}
  \abss{\big(\smoothloglihood_\numobs (\theta) -
    \smoothloglihood_\numobs (\theta')\big) - \big(\smoothloglihood
    (\theta) - \smoothloglihood (\theta')\big)}\\ &\leq \sup_{\theta
    \in [-1, 1]} \int_{\theta'}^\theta \abss{\nabla
    \smoothloglihood_\numobs (s) - \nabla \smoothloglihood (s)} ds\\ &
  \leq q_3 \numobs^{- \frac{2 \beta}{1 + 2 \beta}} \log
  \frac{\numobs}{\delta}.
\end{align*}
Given $\numobs \geq c \left( \frac{q_3}{\gap} \log(1/\delta)
\right)^{1 + \frac{1}{2 \beta}}$, we have that:
\begin{align*}
    \sup_{\theta, \theta' \in [-1, 1]}
    \abss{\big(\smoothloglihood_\numobs (\theta) -
      \smoothloglihood_\numobs (\theta')\big) - \big(\smoothloglihood
      (\theta) - \smoothloglihood (\theta')\big)} \leq \frac{1}{4}
    \gap.
\end{align*}
Combining with the population-level bound, we obtain the following
bound with probability $1 - \delta$:
\begin{align*}
    \sup_{\theta \in [-1, - \localradius/ 4] \cup [\localradius / 4,
        1]} \smoothloglihood_\numobs (\theta) \leq \inf_{|\theta'| <
      a_\numobs}\smoothloglihood (\theta') - \frac{1}{2} \gap,
\end{align*}
which proves the desired claim.

\subsubsection{Local structure of $\smoothloglihood$}
\label{subsec:local_singularity}

Now we prove claim~\eqref{eq:singular-models-geometry-bound}.  We
first analyze the local structure of $\loglihoodsing$, and then study
the effect of smoothing. For $\theta > 0$, direct calculation yields:
\begin{align}
    -\nabla_\theta \loglihoodsing (\theta) &= \int_{- \infty}^{+
      \infty} f (x) \nabla \log f (x - \theta) dx \nonumber \\ &=
    \int_{- \infty}^{+ \infty} (f(x + \theta) - f (x)) \nabla \log f
    (x) dx \nonumber \\ &= \int_{- \infty}^{+ \infty} \int_0^\theta f
    (x + z) (\nabla \log f (x + z) \cdot \nabla \log f (x)) dz dx =
    I_1 + I_2 + I_3 + I_4. \label{eq:singularity_key_equation}
\end{align}
where the terms $I_1$, $I_2$, $I_3$ and $I_4$ are defined as follows:
\begin{align*}
    I_1 (\theta)&\mydefn \beta^2 \int_{- \infty}^{\infty}
    \int_{0}^{\theta} f (x + z) \ell (x) \ell (x + z) |x|^{\beta - 1}
    |x + z|^{\beta - 1} \mathrm{sgn} (x (x + z)) dz dx\\ I_2
    (\theta)&\mydefn \beta^2 \int_{- \infty}^{\infty}
    \int_{0}^{\theta} f (x + z) \ell (x + z) |x + z|^{\beta - 1}
    \nabla \log h (x) \mathrm{sgn} (x + z) dz dx,\\ I_3
    (\theta)&\mydefn \beta^2 \int_{- \infty}^{\infty}
    \int_{0}^{\theta} f (x + z) \ell (x) |x|^{\beta - 1} \nabla \log h
    (x + z) \mathrm{sgn} (x) dz dx,\\ I_4 (\theta)& \mydefn \beta^2
    \int_{- \infty}^{\infty} \int_{0}^{\theta} f (x + z) \nabla \log h
    (x) \cdot \nabla \log h (x + z) dz dx.
\end{align*}
For the term $I_1$, we note that:
\begin{align*}
    I_1 & = \theta^{2 \beta} \ell (0^+)^2 \beta \int_1^{+ \infty}
    y^{\beta - 1} (y^\beta - (y - 1)^{\beta}) f (\theta y) dy \\ &
    \quad \quad + \theta^{2 \beta} \ell (0^-)^2 \beta \int_0^{+\infty}
    y^{\beta - 1} ((y + 1)^\beta - y^\beta) f (- \theta y) dy\\ &
    \quad \quad + \theta^{2 \beta} \beta \ell (0^+) \int_0^1 y^{\beta
      - 1}( \ell (0^+) y^\beta - \ell (0^-) (1 - y)^\beta ) f (\theta
    y) dy\\ & \geq \theta^{2 \beta} \beta \int_0^1 \ell (0^+)^2
    y^{\beta - 1} y^\beta f (\theta y) dy + \int_0^{1} \ell (0^-)^2
    y^{\beta - 1} ((y + 1)^\beta - y^\beta) f (- \theta y) dy\\ &
    \qquad- \int_0^1 \ell (0^+) \ell (0^-) y^{\beta - 1} (1 - y)^\beta
    f (\theta y) dy.
\end{align*}
Since the function $f$ is continuous at point $0$, and $f (0) > 0$, there exists $r' > 0$, such that:
\begin{align*}
    \forall s \in (- r', r'), \quad |f (s) - f (0)| \leq \frac{1}{10}
    f (0).
\end{align*}
For $\theta \in (0, r')$, we have:
\begin{align*}
    \int_0^1 \ell (0^+) \ell (0^-) y^{\beta - 1} (1 - y)^\beta f
    (\theta y) dy & \leq \frac{11}{10} \ell (0^+) \ell (0^-) f(0)
    \int_0^1 y^{\beta - 1} (1 - y)^\beta dy\\ &\leq \left( \frac{4}{5}
    \ell (0^+)^2 + \frac{121}{320} \ell (0^-)^2 \right) f(0) \int_0^1
    y^{\beta - 1} (1 - y)^{\beta} dy,\\ \int_0^{1} \ell (0^-)^2
    y^{\beta - 1} ((y + 1)^\beta - y^\beta) f (- \theta y) dy & \geq
    \frac{9}{10} \ell (0^-)^2 f(0) \int_0^1 y^{\beta - 1} ((1 +
    y)^\beta - y^\beta) dy,\\ \int_0^1 \ell (0^+)^2 y^{\beta - 1}
    y^\beta f (\theta y) dy &\geq \frac{9}{10} \ell(0^+)^2 f (0)
    \int_0^1 y^{2 \beta - 1} dy.
\end{align*}
Note that $y^{\beta - 1} (1 - y)^\beta \leq y^{\beta - 1} \left( (1 +
y)^\beta - y^\beta \right) + y^{2 \beta - 1}$.  Therefore, for $\theta
\in (0, r')$, we have the following lower bound on $I_1$:
\begin{align*}
    I_1 \geq \frac{\beta \left( \ell (0^+)^2 + \ell (0^-)^2 \right) f (0)}{10}\theta^{2 \beta}
\end{align*}
On the other hand, we can also deduce the following upper bound on
$I_1$ from above expression:
\begin{align*}
    |I_1| &\leq \theta^{2 \beta} \left( b^2 \int_1^{+ \infty} y^{\beta
      - 1} (y^\beta - (y - 1)^{\beta}) f (\theta y) dy + a^2
    \int_0^{+\infty} y^{\beta - 1} ((y + 1)^\beta - y^\beta) f (-
    \theta y) dy \right) \\ & \hspace{18 em} +\theta^{2 \beta} b
    \int_0^1 y^{\beta - 1}( b y^\beta + a (1 - y)^\beta ) f (\theta y)
    dy\\ &\leq \theta^{2 \beta} \sup_{z \in \real} f (z) \cdot \biggr(
    \int_1^{+ \infty} (b^2 y^{\beta - 1} + a^2 (y - 1)^{\beta - 1} )
    (y^\beta - (y - 1)^{\beta}) dy \\ & \hspace{18 em} + b \int_0^1
    y^{\beta - 1}( b y^\beta + a (1 - y)^\beta ) dy \biggr)\\ &\leq
    M_1 \theta^{2 \beta},
\end{align*}
for a constant $M_1 < + \infty$ depending on $a, b$ and $\beta$.

Note that above arguments holds true also on the side $\theta
\rightarrow 0^-$. We therefore have the following lower bound for
$\theta \in (-r' , r')$:
\begin{align}
    \frac{\beta (a^2 + b^2) f (0)}{10} |\theta|^{2 \beta} \leq I_1
    (\theta) \leq M_1 |\theta|^{2 \beta}. \label{eq:bound_I1}
\end{align}

Now we bound each of $I_2, I_3, I_4$ respectively, and show that they
are of order $O (\theta)$, as $\theta \rightarrow 0^+$.  For the term
$I_2 (\theta)$, it is easy to see by definition that $I_2 (0) = 0$,
and by the Lebesgue differentiation theorem, we have that:
\begin{align*}
    \abss{\frac{d I_2}{d \theta} (\theta)} &\leq \beta^2 \int_{-
      \infty}^{\infty} f (x + \theta) \abss{\ell (x + \theta)} \abss{x
      + \theta}^{\beta - 1} \cdot \abss{\nabla \log h (x)} dx\\ &\leq
    (|\ell^+ (0)| + |\ell^- (0)|) \cdot \biggr( \int_{-1}^1 f (y)
    \abss{y}^{\beta - 1} \cdot |\nabla \log h (y - \theta)| dy
    \\ & \hspace{16 em} + \int_{-\infty}^\infty f (y) |\nabla \log h
    (y - \theta)| dy \biggr).
\end{align*}
Invoking the assumption~\eqref{eq:assume-lip-log-h-in-singular-models}
on $h$, we have that:
\begin{align*}
    \abss{\frac{d I_2}{d \theta} (\theta)} &\leq 4 (|a| + |b|)
    \singconstone e^{|a| + |b| + \singconstone } f (0) \int_{-1}^1
    |y|^{\beta - 1} dy + \singconstone (|a| + |b|) .\\ &\leq \frac{8
      (|a| + |b|)}{\beta} \singconstone e^{|a| + |b| + \singconstone}
    f (0) + (|a| + |b|) \singconstone =: M_2
\end{align*}
So for $|\theta|\leq 1$, we have that:
\begin{align}
    |I_2 (\theta)| \leq M_2 |\theta|. \label{eq:bound_I2}
\end{align}
Similarly, for the term $I_3$, when $|\theta| \leq 1$, we have:
\begin{align*}
    \abss{\frac{d I_3}{d \theta} (\theta)} &\leq (|a| + |b|)\int_{-
      \infty}^{+ \infty} f (x + \theta) |x|^{\beta - 1} \abss{\nabla
      \log h (x + \theta)} dx\\ &\leq (|a| + |b|) \left( \int_{-1}^1 f
    (0) e^{\singconstone} |x|^{\beta - 1} \singconstone dx +
    \singconstone \right).\\ &\leq \frac{8 (|a| + |b|)}{\beta}
    \singconstone e^{|a| + |b| + \singconstone} f (0) + (|a| + |b|)
    \singconstone =: M_3,
\end{align*}
and consequently, for $\theta \in [-1,1]$, we have the bound
\begin{align}
    |I_3 (\theta)| \leq M_3 |\theta|. \label{eq:bound_I3}
\end{align}
For the last term $I_4$, simple calculation yields:
\begin{align}
    |I_4 (\theta)| &\leq |\theta| \cdot \Exs_{f} \left[ \sup_{z \in
    |[0, \theta]} \nabla \log h (X)| \cdot |\nabla \log h (X + z)|
    |\right] \nonumber \\ &\leq \singconstone^2 |\theta| =: M_4
    ||\theta|. \label{eq:bound_I4}
\end{align}
for $\theta \in [-1, 1]$.

We define $\localradius \mydefn \min \left(r', 1, \big(\tfrac{ \beta
  (a^2 + b^2) f (0)}{20 (M_2 + M_3 + M_4)}\big)^{\frac{1}{1 - 2
    \beta}} \right)$.  Plugging the
bounds~\eqref{eq:bound_I1}-\eqref{eq:bound_I4} to
equation~\eqref{eq:singularity_key_equation}, for $|\theta| <
\localradius$, we have:
\begin{align*}
    \inprod{\theta}{\nabla \loglihoodsing (\theta)} &\geq \frac{\beta
      (a^2 + b^2)}{10} |\theta|^{1 + 2 \beta} - (M_2 + M_3 + M_4)
    |\theta|^2\\ &\geq \frac{\beta (a^2 + b^2)}{20} |\theta|^{1 +
      2\beta}.
\end{align*}
Given the smoothing radius $a_n < r_0/2$, for any $\theta \in (a_n, r_0 - a_n)$, we have that:
\begin{align}
    \inprod{\theta}{\nabla \smoothloglihood (\theta)} & = \frac{1}{2
      a_n} \int_{- a_n}^{a_n} \frac{\theta}{\theta + z} \cdot (\theta
    + z) \cdot \nabla \loglihoodsing (\theta + z) dz \nonumber\\ &
    \geq \frac{1}{2 a_n} \int_{- a_n}^{a_n} \frac{\theta}{\theta + z}
    \cdot \frac{\beta (a^2 + b^2)}{20} (\theta + z)^{1 + 2\beta} d z
    \nonumber\\ & \geq \frac{\beta (a^2 + b^2)}{20} \theta \cdot
    (\theta - a_n)^{2
      \beta}.\label{eq:population-unsmoothed-bound-in-singular-models}
\end{align}
For $\theta \in (0, a_n)$, we note that:
\begin{align*}
    \inprod{\theta}{\nabla \smoothloglihood (\theta)} &\geq \frac{1}{2
      a_n} \left( \int_{- \theta}^{a_n} \frac{\theta}{\theta + z}
    \cdot (\theta + z) \cdot \nabla \loglihoodsing (\theta + z) dz -
    \int_{-a_n}^{- \theta} |\theta| \cdot \abss{\nabla \loglihoodsing
      (\theta + z)} dz \right)\\ &\geq - \frac{1}{2a_n} \cdot a_n
    \theta \cdot \sup_{|z| \leq a_n} \left( |I_1 (\theta)| + |I_2
    (\theta)| + |I_3 (\theta)| + |I_4 (\theta)| \right)\\ &\geq - 2
    M_1 a_n^{1 + 2 \beta}.
\end{align*}
We can observe that similar bounds also hold true in the intervals $(-
r_0 + a_n, - a_n)$ and $(- a_n, 0)$.  Therefore, we conclude that the
following bound holds true within the interval $(\localradius / 2,
\localradius / 2)$:
\begin{align*}
    \inprod{\theta}{\nabla \smoothloglihood (\theta)} = \begin{cases}
      - 2 M_1 a_n^{1 + 2 \beta}, & 0 \leq r \leq a_n,\\ \frac{\beta
        (a^2 + b^2)}{20} \cdot (r - a_n)^{1 + 2 \beta} - 2 M_1 a_n^{1
        + 2 \beta}, & a_n \leq r \leq r_0 / 2,
    \end{cases}
\end{align*}


\subsubsection{Bounding the difference $\nabla \smoothloglihood - \nabla \smoothloglihood_n$}
\label{subsec:deviation_bound_singularity}

Now, we proceed to prove
claim~\eqref{eq:singular-models-deviation-bound}. By definition, we
note that
\begin{align*}
\frac{d}{d \theta} \smoothloglihood_n (\theta) & = \frac{1}{n} \sum_{i
  = 1}^n \frac{1}{2 a_n} \left( \log f (\theta + a_n - X_i) - \log f
(\theta - a_n - X_i) \right) \\
& = \frac{1}{n} \sum_{i = 1}^n \frac{1}{2 a_n} \left( |\theta + a_n -
X_i|^\beta \ell (\theta + a_n - X_i) - |\theta - a_n - X_i|^\beta \ell
(\theta + a_n - X_i) \right)\\ &\quad \quad+ \frac{1}{n} \sum_{i =
  1}^n \frac{\log h (\theta + a_n - X_i) - \log h (\theta - a_n -
  X_i)}{2 a_n}.
\end{align*}
We define the following function:
\begin{align*}
 \eta_\theta (x) &\mydefn \frac{\log h (\theta + a_n - x) - \log h
   (\theta - a_n - x)}{2 a_n}, \quad \mbox{and}\\ \nu_\theta (x)
 &\mydefn \frac{1}{2 a_n} \left( |\theta + a_n - x|^\beta \ell (\theta
 + a_n - x) - |\theta - a_n - x|^\beta \ell (\theta + a_n - x) \right)
\end{align*}
We also define the following random variable:
\begin{align*}
    Z^{(1)}_\numobs (\theta) \mydefn \frac{1}{\numobs} \sum_{i =
      1}^\numobs \eta_\theta (X_i) - \Exs \left[ \eta_\theta (X)
      \right], \quad \mbox{and} \quad Z^{(2)}_\numobs (\theta) \mydefn
    \frac{1}{\numobs} \sum_{i = 1}^\numobs \nu_\theta (X_i) - \Exs
    \left[ \nu_\theta (X) \right].
\end{align*}


\subsubsection{Upper bounds for the term $Z^{(1)}_\numobs$}

By the Lipschitz
assumption~\eqref{eq:assume-lip-log-h-in-singular-models}, we have
\begin{align*}
    \abss{\eta_\theta (x)} \leq \frac{1}{2 a_n}\int_{\theta - a_n -
      x}^{\theta + a_n - x} \abss{\nabla \log h (t)} dt \leq
    \singconstone.
\end{align*}
By the Hoeffding bound, for any given $\theta \in [-1, 1]$ and $t >
0$, we obtain
\begin{align*}
    \Prob \left( |Z_\numobs^{(1)} (\theta)| > t \right) \leq 2 \exp
    \left( - \frac{2 \numobs t^2}{\singconstone^2} \right).
\end{align*}
On the other hand, for $\theta_1, \theta_2 \in [-1, 1]$, we note that:
\begin{align*}
    \abss{\eta_{\theta_1} (x) - \eta_{\theta_2} (x)} \leq
    \singconstone \frac{|\theta_1 - \theta_2|}{2 a_n},
\end{align*}
which implies that $\abss{Z^{(1)}_\numobs (\theta_1) - Z^{(1)}_\numobs
  (\theta_1)} \leq \singconstone\frac{|\theta_1 - \theta_2|}{2 a_n}$
almost surely.

Let $\mathcal{M}_\numobs \mydefn \{\theta_1, \theta_2, \cdots,
\theta_{K}\}$ be a maximal $\frac{a_\numobs}{\singconstone
  \numobs}$-packing of the interval $[-1, 1]$. By union bound, we find
that
\begin{align*}
    \Prob \left( \exists \theta \in \mathcal{M}_\numobs ,
    \abss{Z_\numobs^{(1)} (\theta)} \geq t \right) \leq
    2|\mathcal{M}_\numobs| e^{- \frac{2 \numobs
        t^2}{\singconstone^2}}.
\end{align*}
Consequently, for any $\delta > 0$, we have the following uniform
upper bound with probability $1 - \delta$:
\begin{align*}
    \sup_{\theta \in [-1, 1]} \abss{Z_\numobs^{(1)} (\theta)} \leq
    \frac{2}{\numobs} + \singconstone \sqrt{\frac{1}{\numobs} \log
      \frac{|\mathcal{M}_\numobs|}{\delta}} \leq \frac{2}{\numobs} + 3
    \singconstone \sqrt{\frac{1}{\numobs} \log
      \frac{\numobs}{\delta}}.
\end{align*}

\paragraph{Upper bounds for the term $Z^{(2)}_\numobs$}

We first study moment bounds of the random variable $\nu_\theta
(X_i)$. For $p \geq 2$, we have
\begin{align*}
    \Exs \left[ \abss{\nu_\theta (X_i)}^p \right] = \frac{1}{(2
      a_n)^p} \int_{- \infty}^{+ \infty} \abss{ | z + a_n|^\beta \ell
      (z + a_n) - |z - a_n|^\beta \ell (z - a_n) }^p f (\theta - z)
    dz.
\end{align*}
Define $S \mydefn \ell(0^-) + \ell (0^+)$, which is positive. To upper
bound the integral, we split it into three terms:
\begin{align*}
    \bar{I}_1 (\theta) &\mydefn \int_{- 3 a_n}^{3 a_n} |\nu_\theta
    (\theta - z)|^p f (\theta - z) dz,\\ \bar{I}_2 (\theta) &\mydefn
    \int_{-1 - 3 a_n}^{- 3 a_n} |\nu_\theta (\theta - z)|^p f (\theta
    - z) dz + \int_{3 a_n}^{1 + 3 a_n} |\nu_\theta (\theta - z)|^p f
    (\theta - z) dz,\\ \bar{I}_3 (\theta) &\mydefn \int_{- \infty}^{-1
      - 3 a_n} |\nu_\theta (\theta - z)|^p f (\theta - z) dz + \int_{1
      + 3 a_n}^{+\infty} |\nu_\theta (\theta - z)|^p f (\theta - z)
    dz.
\end{align*}
For the term $\bar{I}_1(\theta)$, we can simply take upper bounds on
each term of $\nu_\theta (\theta - z)$, and obtain
\begin{align*}
    \bar{I}_1 (\theta) \leq (6 a_n)^{(\beta - 1)p + 1} S^p \cdot
    \sup_{z \in [-a_n, a_n]} f (\theta - z).
\end{align*}
For the term $\bar{I}_2 (\theta)$, note that
\begin{align*}
    & \hspace{- 3 em} \int_{3 a_n}^{1 + 3 a_n} |\nu_\theta (\theta -
  z)|^p f (\theta - z) dz \\ & = (2 a_n)^{- p} \ell (0^+)^p \int_{2
    a_n}^{1 + 2 a_n} z^{p \beta} \left( \left(1 + \frac{2 a_n}{z}
  \right)^\beta - 1 \right)^p f (\theta - z - a_n) dz\\ &\leq (2
  a_n)^{- p} \ell (0^+)^p \int_{2 a_n}^{1 + 2 a_n} (2\beta a_n
  z^{\beta - 1})^p f (\theta - z - a_n) dz\\ &\leq \frac{\beta^p}{p (1
    - \beta) - 1}\ell (0^+)^p (2 a_n)^{1 + p (\beta - 1)} \sup_{z \in
    [- 1 - 3 a_n, 1 + 3 a_n]} f (\theta - z).
\end{align*}
For the integral within the interval $[- 1 - 3 a_n, - 3 a_n]$, we have
a similar upper bound. Putting them together, we obtain
\begin{align*}
    \bar{I}_2 (\theta) \leq \frac{S^p}{p (1 - \beta) - 1} (2 a_n)^{1 +
      p (\beta- 1)} \sup_{z \in [- 1 - 3 a_n, 1 + 3 a_n]} f (\theta -
    z).
\end{align*}
For the last term $\bar{I}_3 (\theta)$, we note that for $|z| > 1 +
a_n$, there is
\begin{align*}
    \abss{|z + a_n|^\beta - |z - a_n|^\beta} \leq \int_{z - a_n}^{z +
      a_n} \beta |z - s|^{\beta - 1} ds \leq 2 \beta a_n \leq a_n.
\end{align*}
Consequently, we have
\begin{align*}
    \bar{I}_3 (\theta) \leq (2a_n)^{- p} \left(\int_{- \infty}^{-1 - 3
      a_n} + \int_{- \infty}^{-1 - 3 a_n} \right) a_n^p f (\theta - z)
    dz \leq 1.
\end{align*}
Combining the above upper bounds of $\bar{I}_{1}(\theta)$,
$\bar{I}_{2}(\theta)$, and $\bar{I}_{3}(\theta)$, for any $a_n < 1$
and $p \geq 2$, we obtain that
\begin{align*}
    \left( \Exs |\nu_\theta (X_i)|^p \right)^{\frac{1}{p}} \leq
    C\left( \frac{ \sup_{z \in [-4, 4]} f (\theta - z)}{p(1 - \beta) -
      1} \right)^{\frac{1}{p}} a_n^{\beta - 1 + \frac{1}{p}},
\end{align*}
for a universal constant $C > 0$.

Invoking Bernstein inequality, for any fixed $\theta \in [-1, 1]$ and
$t > 0$, we have
\begin{align*}
    \Prob \left( \abss{\frac{1}{n} \sum_{i = 1}^n \nu_\theta (X_i) -
      \Exs \brackets{\nu_\theta (X)}} > t \right) \leq \exp \left(
    \frac{- n t^2 / 2}{Q a_n^{2 \beta - 1} + a_n^{\beta - 1} t / 3 }
    \right),
\end{align*}
where $Q \mydefn C \frac{ \sup_{z \in [-4, 4]} f (\theta - z)}{1 - 2
  \beta} $ for universal constant $C > 0$.

Now we extend the concentration inequality for a fixed $\theta$ to the
uniform bound for any $\theta \in [-1, 1]$.  Recall that $Z_n^{(2)}
(\theta) \mydefn \frac{1}{n} \sum_{i = 1}^n \nu_\theta (X_i) - \Exs
[\nu_\theta (X)]$.  For $\theta_1, \theta_2 \in [-1, 1]$, we have
\begin{align*}
    & \hspace{- 2 em} |Z_n^{(2)} (\theta_1) - Z_n^{(2)} (\theta_2)|
  \\ & \leq \frac{1}{n a_n} \sum_{i = 1}^n \abss{|\theta_1 + a_n -
    X_i|^\beta \ell (\theta_1 + a_n - X_i) - |\theta_2 + a_n -
    X_i|^\beta \ell (\theta_2 + a_n - X_i)}\\ & + \frac{1}{n a_n}
  \sum_{i = 1}^n \abss{|\theta_1 - a_n - X_i|^\beta \ell (\theta_1 -
    a_n - X_i) - |\theta_2 - a_n - X_i|^\beta \ell (\theta_2 - a_n -
    X_i)}\\ & \leq \frac{2 S}{a_n} |\theta_1 - \theta_2|^\beta,\quad
  \mathrm{a.s.}
\end{align*}
where we use the inequality $\abss{|x|^\beta - |y|^\beta} \leq |x -
y|^\beta$ for any $x, y$.

Let $b_n \mydefn \left( \frac{a_n}{2 S n} \right)^{\frac{1}{\beta}}$
and $\mathcal{M}_{b_n}$ be a maximal $b_n$-packing of the interval
$[-1, 1]$. For any $\theta \in [-1, 1]$, there exists $\theta' \in
\mathcal{M}_{b_n}$, such that $|\theta - \theta'| < b_n$, which
implies that $|Z_n^{(2)} (\theta) - Z_n^{(2)} (\theta')| <
\frac{1}{n}$. Consequently, for any $t > 0$, we find that
\begin{align*}
    \Prob \left(\sup_{\theta \in [-1, 1]} |Z_n^{(2)} (\theta)| > t +
    \frac{1}{n} \right) & \leq \Prob \left( \sup_{\theta \in
      \mathcal{M}_{b_n}} |Z_n^{(2)} (\theta)| > t \right) \\ & \leq
    |\mathcal{M}_{b_n}|\exp \left( \frac{- n t^2 / 2}{Q a_n^{2 \beta -
        1} + a_n^{\beta - 1} t / 3 } \right).
\end{align*}
Given $a_n > \tfrac{S}{n^2}$, we have $\log |\mathcal{M}_{b_n}| \leq
\frac{3}{\beta} \log n$.  Choosing appropriate value of $t$, we have
\begin{align*}
    \sup_{\theta \in [-1, 1]} |Z_n^{(2)} (\theta)| \leq C \left(
    \sqrt{Q} a_n^{\beta - 1/2} \sqrt{\frac{\log n / \delta}{n}} +
    a_n^{\beta - 1} \frac{\log n / \delta}{n} + \frac{1}{n} \right),
\end{align*}
with probability $1 - \delta$.

Collecting the bounds for the terms $Z^{(1)}_\numobs$ and
$Z^{(2)}_\numobs$, for $a_\numobs \in \big( \tfrac{S}{\numobs^2}, 1
\big)$, we conclude the following bound that holds true with
probability $1 - \delta$:
\begin{align*}
    \sup_{\theta \in [-1, 1]}\abss{\nabla_\theta
      \smoothloglihood_\numobs (\theta) -
      \nabla_\theta\smoothloglihood (\theta) } \leq c \cdot \left(
    \sqrt{Q} a_n^{\beta - 1/2} \sqrt{\frac{\log n / \delta}{n}} +
    a_n^{\beta - 1} \frac{\log n / \delta}{n} \right),
\end{align*}
for a universal constant $c > 0$.

\section{Proofs of the remaining auxiliary results}
\label{subsec:auxiliary_results}

In this appendix, we provide proofs of the remaining auxiliary results
in the paper.

\subsection{Proof of~\cref{prop:keybound}}
\label{subsec:proof-prop-keybound}

For any $p \geq 2$, we define the quantity:
\begin{align*}
    R_p \mydefn \sup_{p \geq 0} \left( \Exs_{\pi_t} \left[
      \vecnorm{X}{2}^p \right] \right)^{1/p} \vee \left( \Exs_{\pi^*}
    \left[ \vecnorm{X}{2}^p \right] \right)^{1/p}
\end{align*}

For any given value $\bar{R} > 0$, we note the following
decomposition:
\begin{align*}
    &\abss{\Exs_{\pi_t} \left[ \vecnorm{X}{2}^p \right] - \Exs_{\pi^*} \left[ \vecnorm{X}{2}^p \right]}\\
    &\leq \int_{\ball(0, \bar{R})} |\pi_t - \pi^*| \cdot \vecnorm{x}{2}^p dx + \int_{\ball(0, \bar{R})^C} \pi_t (x) \vecnorm{x}{2}^p dx + \int_{\ball(0, \bar{R})^C} \pi^* (x) \vecnorm{x}{2}^p dx\\
    &\leq \bar{R}^p \cdot \totalvariation(\pi_t, \pi^*) + \Exs_{\pi_t} \left[  \vecnorm{X}{2}^p \bm{1}_{\vecnorm{X}{2} > \bar{R}} \right] + \Exs_{\pi^*} \left[  \vecnorm{X}{2}^p \bm{1}_{\vecnorm{X}{2} > \bar{R}} \right]\\
    &\leq  \bar{R}^p \cdot \totalvariation(\pi_t, \pi^*) + \sqrt{\Exs_{\pi_t} \left[  \vecnorm{X}{2}^{2p} \right]}  \sqrt{\pi_t \left( \vecnorm{X}{2} > \bar{R} \right)} + \sqrt{\Exs_{\pi^*} \left[  \vecnorm{X}{2}^{2p} \right]}  \sqrt{\pi^* \left( \vecnorm{X}{2} > \bar{R} \right)} \\
    &\leq  \bar{R}^p \cdot \totalvariation(\pi_t, \pi^*) + 2 R_{2p}^{p} \cdot R_2 / \bar{R}.
\end{align*}
For any $\varepsilon > 0$, take $\bar{R} \mydefn \frac{\varepsilon}{2 R_{2p}^p R_2}$, we have that:
\begin{align*}
    \lim_{t \rightarrow + \infty} \abss{\Exs_{\pi_t} \left[ \vecnorm{X}{2}^p \right] - \Exs_{\pi^*} \left[ \vecnorm{X}{2}^p \right]} \leq \varepsilon,
\end{align*}
which proves the claim.

\subsection{Proof of~\cref{cor:final-nonconvex}} 
\label{subsec:proof:cor:final-nonconvex}
  
It follows from~\cref{thm:local-weak-convex} that
\begin{align}
   \posterior \left(\ball (\thetastar_j, r_n^{(j)}) \mid \DataX
   \right) \leq \posterior \left(\ball (\thetastar_j, \localradius)
   \mid \DataX \right) \cdot (1 - \vartheta), \quad \forall \ j \in
        [M].\label{eq:final-nonconvex-part1}
\end{align}
It remains to prove a lower bound on the sum $\sum_{j = 1}^M
\posterior (\ball (\thetastar_j, \localradius) \mid \DataX)$.  We
utilize the following lemma, which controls the tail behavior of
posterior distribution in an unbounded space.
\begin{lemma}\label{lemma:tail-far-away}
    Under the condition C.3 for~\cref{cor:final-nonconvex}, for any
    $\vartheta > 0$, we have that:
    \begin{align}
        \posterior \left( \ball \big( 0, R ( \vartheta) \big) \mid
        \DataX \right) \geq 1 - \vartheta, \quad \mbox{where} \quad R
        ( \vartheta) \mydefn 2 R_\delta \log \vartheta^{-1} +
        \sqrt{\frac{6 (d + \log
            \vartheta^{-1})}{c_\prior}}.\label{eq:final-nonconvex-part2}
    \end{align}
\end{lemma}

Taking this lemma as given, we proceed with the proof of the
corollary.  First, by the empirical process assumption within the ball
$\ball \big(0, R ( \vartheta) \big)$, for the sample size satisfying
$\widebar{\noise}_{\numobs, \delta} \big(R (\vartheta)\big) <
\frac{1}{4} \Delta_0$, with probability $1 - \delta$, we have the
bound
\begin{align*}
    \sup_{\theta \in \ball (0, R (\vartheta))} \abss{\loglihood
      (\theta) - \loglihood_\numobs (\theta)} \leq \frac{1}{4}
    \Delta_0.
\end{align*}

Denote $F^* \mydefn F (\thetastar_1)$. By the smoothness condition (A)
of the population-level log-likelihood, we denote $\widetilde{r}_0
\mydefn \sqrt{\Delta_0 / 8 \smooth} \wedge \localradius$. Then, we
have that
\begin{align*}
    \min_{j \in [M]} \inf_{\theta \in \ball (\thetastar_j,
      \widetilde{r}_0)} F (\theta) \geq F^* - \frac{1}{4} \smooth
    (\widetilde{r}_0)^2 = F^* - \frac{1}{4} \Delta_0.
\end{align*}
Denote the set $\mathcal{Z} \mydefn \ball \big( 0, R (\vartheta) \big)
\setminus \bigcup_{j \in [M]} \ball (\thetastar_j, \widetilde{r}_0)$.
Applying above inequalities in conjunction with the gap condition on
the log-likelihood, we find that
\begin{align*}
    \loglihood_\numobs (\theta) \geq \loglihood_\numobs (\theta') +
    \frac{\Delta_0}{4}, \quad \forall \theta \in \bigcup_{j \in [M]}
    \ball (\thetastar_j, \widetilde{r}_0), ~ \theta' \in \mathcal{Z}.
\end{align*}
Consequently, we obtain
\begin{align*}
   \frac{\posterior \left( \theta \in \bigcup_{j \in [M]} \ball
     (\thetastar_j, \widetilde{r}_0) \mid \DataX \right)}{\posterior (
     \theta \in \mathcal{Z} \mid \DataX)} \geq \prior \left(
   \bigcup_{j \in [M]} \ball (\thetastar_j, \widetilde{r}_0) \right)
   \cdot e^{\frac{n \Delta_0}{4}}.
\end{align*}
The prior mass can be lower bounded by the prior density condition and
smoothness condition (B): let $j_0 \in [M]$ be an index such that
$\prior(\thetastar_j) \geq \prior_0$, we have
\begin{align*}
 \prior \left( \bigcup_{j \in [M]} \ball (\thetastar_j,
 \widetilde{r}_0) \right) \geq \prior \left( \ball (\thetastar_{j_0},
 \widetilde{r}_0) \right) \geq \prior_0 \cdot e^{-
   \frac{\smoothprior}{2} \widetilde{r}_0^2} \cdot \mathrm{Vol} \big(
 \ball (\thetastar_{j_0}, \widetilde{r}_0) \big) \geq \prior_0 \cdot
 e^{- \frac{\smoothprior}{2} \widetilde{r}_0^2} \cdot \big(
 \widetilde{r}_0 / \sqrt{d} \big)^d.
\end{align*}
Therefore, given the sample size satisfying the condition:
\begin{align*}
 \numobs \geq \frac{4}{\Delta_0} \left( \log \vartheta^{-1} + \log
 \prior_0^{-1} + \smoothprior \widetilde{r}_0^2 + d \log
 \frac{d}{\widetilde{r}_0} \right),
\end{align*}
we have the lower bound:
\begin{multline}
\posterior \left( \ball \big(0, R (\vartheta)\big) \right)^{-1}
\sum_{j = 1}^M \posterior \left( \ball (\thetastar_j, \localradius)
\mid \DataX \right) \geq 1 - \posterior \left( \ball \big(0, R
(\vartheta)\big) \right)^{-1} \posterior \left( \mathcal{Z} \mid
\DataX \right) \\
\label{eq:final-nonconvex-part3}
\geq 1 - \posterior \Big( \bigcup_{j \in [M]} \ball (\thetastar_j,
\widetilde{r}_0) \mid \DataX \Big)^{-1} \posterior \left( \mathcal{Z}
\mid \DataX \right) \geq 1 - \vartheta.
\end{multline}
Collecting the
bounds~\eqref{eq:final-nonconvex-part1},~\eqref{eq:final-nonconvex-part2},
and~\eqref{eq:final-nonconvex-part3}, we arrive at the lower bound
\begin{align*}
    \posterior \left( \bigcup_{j \in [M]} \ball (\thetastar_j,
    r_n^{(j)}) \; \bigg| \; \DataX \right) \geq (1 - \vartheta)^3,
\end{align*}
which completes the proof of this corollary.


\subsubsection{Proof of~\cref{lemma:tail-far-away}}

For the simplicity of presentation, we make the following argument
conditionally on $X_1^n$.  The diffusion
process~\eqref{eq-diffusion-main} has $\posterior (\cdot | X_1^n)$ as
its stationary distribution. Given $p \geq 4$, we take the potential
function as:
\begin{align*}
    \Phi (\theta) \mydefn \max \left( \vecnorm{\theta}{2} - R_\delta,
    0 \right)^{p}.
\end{align*}
By It\^{o}'s formula, for $T \geq 0$, we have the expansion
\begin{align*}
    \Exs \left[ \Phi (\theta_T) \right] &= \frac{p}{2}
    \underbrace{\int_0^T \Exs \left[ \inprod{\nabla F_n
          (\theta_t)}{\theta_t} \cdot \max \left( \vecnorm{\theta}{2}
        - R_\delta, 0 \right)^{p - 2} \right] dt}_{\mydefn I_1}\\ & +
    \frac{p}{2n} \underbrace{\int_0^T\Exs \left[ \inprod{\nabla \log
          \prior (\theta_t)}{\theta_t} \cdot \max \left(
        \vecnorm{\theta}{2} - R_\delta, 0 \right)^{p - 2} \right]
      dt}_{\mydefn I_2} \\ & + \frac{p}{2n} \underbrace{\int_0^T \Exs
      \left[ \Big( \max(\vecnorm{\theta_t}{2} - R_\delta, 0)^2 d +
        \vecnorm{\theta_t}{2}^2 (p - 1) \Big) \max
        (\vecnorm{\theta_t}{2} - R_\delta, 0)^{p - 4} \right]
      dt}_{I_3}.
\end{align*}
By condition~\eqref{eq:condition-for-nonconvex-tail-loglihood} on the
log-likelihood function, we have that $I_1 \leq 0$.

For the term $I_{2}$,
condition~\eqref{eq:condition-for-nonconvex-tail-prior} implies the
following upper bound:
\begin{align*}
    I_2 \leq - c_\prior \int_0^T \Exs \left[\vecnorm{\theta_t}{2}^2
      \cdot \max \left( \vecnorm{\theta_t}{2} - R_\delta, 0 \right)^{p
        - 2} \right] dt \leq - c_\prior \int_0^T \Exs \left[ \Phi
      (\theta_t) \right] dt.
\end{align*}
The term $I_3$ can be decomposed into two parts:
\begin{align*}
    I_3 &\leq (p + d) \int_0^T \Exs \left[ \max \left(
      \vecnorm{\theta_t}{2} - R_\delta, 0 \right)^{p - 2} \right] dt +
    p R_\delta^2 \int_0^T \Exs \left[ \max \left(
      \vecnorm{\theta_t}{2} - R_\delta, 0 \right)^{p - 4} \right] dt
    \\ &= (p + d) \int_0^T\Exs \left[ \Phi (\theta_t)^{\frac{p -
          2}{p}} \right] dt + p R_\delta^2 \int_0^T \Exs \left[ \Phi
      (\theta_t)^{\frac{p - 4}{p}} \right] dt\\ &\leq (p + d) \int_0^T
    \Big( \Exs \left[ \Phi (\theta_t)\right] \Big)^{\frac{p - 2}{p}}
    dt + p R_\delta^2 \int_0^T \Big( \Exs \left[ \Phi (\theta_t)
      \right] \Big)^{\frac{p - 4}{p}} dt.
\end{align*}
Denote $\psi_t \mydefn  \Exs \left[ \Phi (\theta_t) \right] $, we have the integral inequality
\begin{align*}
    \psi_T &\leq \frac{p}{2n} \int_0^T \left( - c_\prior \psi_t + (p +
    d) \psi_t^{\frac{p - 2}{p}} + p R_\delta^2 \psi_t^{\frac{p -
        4}{p}} \right) dt \\ &\leq \frac{p}{2n} \int_0^T \left( -
    c_\prior \psi_t + \frac{c_\prior}{3} \psi_t + \frac{(p +
      d)^{\frac{p}{2} }}{(c_\prior / 3)^{\frac{p - 2}{2}}} +
    \frac{c_\prior}{3} \psi_t + \frac{(p R_\delta)^{\frac{p}{4}
    }}{(c_\prior / 3)^{\frac{p - 4}{4}}} \right) dt\\ &\leq \frac{p
      c_\prior}{6n} \int_0^T \left( - \psi_t dt + \left( \frac{6 (p +
      d)}{c_\prior} \right)^{\frac{p}{2}} + (p R_\delta)^p \right) dt.
\end{align*}
Note that $\psi_0 = 0$ by definition. Applying Gr\"{o}nwall
inequality, we arrive at the bound
\begin{align*}
    \psi_t \leq \left( \frac{6 (p + d)}{c_\prior}
    \right)^{\frac{p}{2}} + (p R_\delta)^p, \quad \forall t \geq 0.
\end{align*}
Consequently, we have the bound
\begin{align*}
    \left( \Exs \left[ \vecnorm{\theta}{2}^p \mid \DataX \right]
    \right)^{1/p} &\leq R_\delta + \lim\sup_{t \rightarrow + \infty}
    \left( \Exs \left[ \Phi (\theta_t) \right] \right)^{1/p}\\ &\leq
    (p + 1) R_\delta + \sqrt{\frac{6 (p + d)}{c_\prior}},
\end{align*}
which completes the proof of~\cref{lemma:tail-far-away}.


\subsection{A limit result}

We begin with a lemma on the limiting behavior of a certain type of
function. The lemma is used in the proof
of~\cref{theorem-main-weakly-convex}
in~\Cref{subsection:weakly_convex_proof}.
\begin{lemma}
  \label{lemma-integral-ineq-limit-control}
Let $\phi$ be a non-increasing continuous function on the real line
with $\phi(c) = 0$, and such that $\phi(t) \geq 0$ for all $t \in (c,
\infty)$.  Suppose that there exist two continuous functions $f, g:
      [0, +\infty) \rightarrow \real$ such that $\lim_{t \rightarrow +
          \infty} g(t)$ exists and $f (t) \leq \int_0^t \phi(g(s)) ds$
        for all $t \geq 0$.  Under these conditions, we have $\lim_{t
          \rightarrow + \infty} g(t) \leq c$.
\end{lemma}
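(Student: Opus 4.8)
The plan is to argue by contradiction. Suppose that the limit $L \defn \lim_{t\to\infty} g(t)$, which exists by hypothesis, satisfies $L > c$. I will show that the right-hand side of the assumed inequality must diverge to $-\infty$, which is incompatible with the left-hand side $f$ being bounded below (in each application in this paper, $f$ is a moment of the diffusion process and hence nonnegative, so this boundedness is automatic).

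First I would extract a uniform negative bound on $\phi\circ g$ for large times. Since $\phi$ is continuous, $\phi(c)=0$, and $\phi(t) < 0$ for $t > c$ (as holds for the function $\phi$ constructed in the proof of Theorem~\ref{theorem-main-weakly-convex}), we have $\phi(L) < 0$. Fix any $\eta \in (0, -\phi(L))$. Because $\phi$ is continuous and $g(s)\to L$, there is a time $T_0 \ge 0$ with $\phi(g(s)) \le -\eta$ for every $s \ge T_0$; alternatively, invoking monotonicity of $\phi$ it suffices to note that $g(s) \ge \tfrac{L+c}{2} > c$ for all large $s$, whence $\phi(g(s)) \le \phi\big(\tfrac{L+c}{2}\big) < 0$.

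Next I would integrate this pointwise bound. For $t \ge T_0$,
\begin{align*}
  \int_0^t \phi(g(s))\, ds \;=\; \int_0^{T_0} \phi(g(s))\, ds + \int_{T_0}^t \phi(g(s))\, ds \;\le\; \int_0^{T_0}\phi(g(s))\, ds - \eta\,(t - T_0).
\end{align*}
The first term on the right is a fixed finite constant, since $\phi\circ g$ is continuous, hence bounded, on the compact interval $[0,T_0]$. Letting $t\to\infty$ therefore yields $\int_0^t \phi(g(s))\, ds \to -\infty$. Combined with the hypothesis $f(t) \le \int_0^t \phi(g(s))\, ds$, this forces $f(t) \to -\infty$, contradicting the lower bound on $f$. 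Hence $L \le c$, which is the assertion.

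The one genuinely delicate step is the first one: the whole argument hinges on $\phi(g(s))$ staying \emph{uniformly} bounded away from $0$ for all large $s$, rather than merely being negative at each individual time or drifting back toward $0$ — this is exactly what the strict negativity of $\phi$ on $(c,\infty)$, together with continuity of $\phi$ and $g$, supplies. Everything past that point is a routine integration estimate, and it is worth recording explicitly that the argument also uses that $f$ is bounded below, a property satisfied in every invocation of the lemma in this paper.
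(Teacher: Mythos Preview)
Your proof is correct and follows essentially the same contradiction argument as the paper: assume the limit exceeds $c$, extract a uniform strictly negative bound on $\phi(g(s))$ for all large $s$, integrate to drive the right-hand side to $-\infty$, and contradict the nonnegativity of $f$. You are in fact more explicit than the paper about two tacit hypotheses the argument really uses---that $\phi$ is strictly negative on $(c,\infty)$ and that $f$ is bounded below---both of which hold in the application to Theorem~\ref{theorem-main-weakly-convex}.
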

\begin{proof}
Define the limit $A \mydefn \lim_{t \rightarrow +\infty} g(t)$, which
exists according to the assumptions.  We proceed via proof by
contradiction.  In particular, suppose that $A > c$.  Based on the
definition of $A$, for the positive constant $\varepsilon = (A - c)/2
>0$, we can find a sufficiently large positive constant $T$ such that
$g(t) > A - \varepsilon$ for any $t \geq T$.  According to the
assumptions on $\phi$, we obtain that
\begin{align*}
  \delta \mydefn - \sup_{s \geq c + \varepsilon} \phi(s) < 0.
\end{align*}
Therefore, for all $t > T$, we arrive at the following inequalities
   \begin{align*}
       0 \leq f(t) \leq \int_0^T \phi( g(s) ) ds + \int_T^{t} \phi(
       g(s) ) ds \leq \int_0^T \phi( g(s) ) ds - \delta (t - T).
   \end{align*}
By choosing $t = 1 + T + \delta^{-1} \int_0^T \phi( g(s) ) ds$, the
above inequality cannot hold.  This yields the desired contradiction,
which completes the proof.
\end{proof}


\subsection{A tail bound based on truncation}

We now state an upper deviation inequality based on a truncation
argument.  This lemma is used in~\cref{subsec:proof:cor:single_index}
to prove the uniform concentration
bound~\eqref{eq:empi_process_index}.  Consider a sequence of random
variables $\{Y_i\}_{i=1}^\numobs$ satisfying the moment bounds
\begin{align}
  \label{EqnMomentBounds}
  \Exs \brackets{|Y_i|^q} \leq (a q)^{b q} \quad \mbox{for all $q = 1,
    2, \ldots$}
\end{align}
where $a, b$ are universal constants.

\begin{lemma}
\label{lemma-truncated-bernstein}
Given an i.i.d. sequence of zero-mean random variables
$\{Y_i\}_{i=1}^n$ satisfying the moment
bounds~\eqref{EqnMomentBounds}, we have
\begin{align*}
  \Prob \left( \frac{1}{n}\sum_{i = 1}^n Y_i \geq (4a)^b
  \sqrt{\frac{\log 4 / \delta}{n}} + \parenth{a \log
    \frac{n}{\delta}}^b \frac{\log 4 / \delta}{n} \right) \leq \delta.
\end{align*}
\end{lemma}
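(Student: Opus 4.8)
The plan is a truncation argument coupled with Bernstein's inequality. Fix a truncation level $\tau > 0$ and a positive integer $q$, both to be calibrated at the end, and decompose each summand as $Y_i = \util_i + \uprime_i$ with $\util_i \defn Y_i \Ind[|Y_i| \le \tau]$ and $\uprime_i \defn Y_i \Ind[|Y_i| > \tau]$. Setting $\bar{S}_{\numobs} \defn \frac{1}{\numobs}\sum_{i=1}^{\numobs}(\util_i - \Exs[\util_i])$, we have the identity
\[
\frac{1}{\numobs}\sum_{i=1}^{\numobs} Y_i = \bar{S}_{\numobs} + \Exs[\util_1] + \frac{1}{\numobs}\sum_{i=1}^{\numobs}\uprime_i,
\]
and the strategy is to bound the three pieces separately: a stochastic fluctuation term, a deterministic truncation bias, and a tail term that will be shown to vanish on a high-probability event.

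First I would control $\bar{S}_{\numobs}$ via the Bernstein inequality for i.i.d.\ bounded variables. Since $|\util_i| \le \tau$ almost surely and $\Exs[\util_i^2] \le \Exs[Y_i^2] \le (2a)^{2b}$ by~\eqref{EqnMomentBounds} applied at $q = 2$, Bernstein gives $\Prob(\bar{S}_{\numobs} \ge s) \le \exp\parenth{-\frac{\numobs s^2}{2(2a)^{2b} + 2\tau s/3}}$, whence $\bar{S}_{\numobs} \le (2a)^b\sqrt{2\log(4/\delta)/\numobs} + \frac{2\tau\log(4/\delta)}{3\numobs}$ with probability at least $1-\delta/4$. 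Next, the bias term satisfies $\Exs[\util_1] = -\Exs[\uprime_1]$ because $\Exs[Y_1] = 0$, and for any integer $q \ge 2$ we get $|\Exs[\uprime_1]| \le \Exs[|Y_1|\Ind[|Y_1|>\tau]] \le \tau^{-(q-1)}\Exs[|Y_1|^q] \le \tau^{-(q-1)}(aq)^{bq}$. For the tail term, Markov's inequality gives $\Prob(|Y_i| > \tau) \le \tau^{-q}(aq)^{bq}$, so a union bound over $i \in [\numobs]$ shows that on an event of probability at least $1-\delta/4$ all $\uprime_i$ vanish, forcing $\frac{1}{\numobs}\sum_i \uprime_i = 0$ there.

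It remains to choose $\tau$ and $q$. Taking $\tau \asymp (a\log(\numobs/\delta))^b$ and $q \asymp \log(\numobs/\delta)$ makes $\tau^{-q}(aq)^{bq} \le \delta/(4\numobs)$ (so the union bound succeeds) and simultaneously drives the bias $|\Exs[\util_1]| \le \tau^{-(q-1)}(aq)^{bq}$ below any fixed inverse-polynomial power of $\numobs$, hence negligible against $\tau\log(1/\delta)/\numobs$. Intersecting the two good events (total failure probability at most $\delta/2 \le \delta$) and rebalancing universal constants — using $(2a)^b\sqrt{2} \le (4a)^b$ and absorbing the factor $2/3$ together with the bias into the coefficient $(a\log(\numobs/\delta))^b$ of the second term — yields precisely the stated inequality.

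The main obstacle, beyond routine bookkeeping, is the \emph{simultaneous} calibration of $\tau$ and $q$: they must be chosen so that (i) the per-sample tail probability $\tau^{-q}(aq)^{bq}$ survives the union bound over the $\numobs$ indices, (ii) the truncation bias $\tau^{-(q-1)}(aq)^{bq}$ is dominated by the claimed rate, and (iii) the Bernstein range contribution $\tau\log(1/\delta)/\numobs$ matches the second term $(a\log(\numobs/\delta))^b\,\log(4/\delta)/\numobs$ of the claim — all while the moment hypothesis~\eqref{EqnMomentBounds} is only available at integer $q$, so one actually works with $q = \lceil \log(\numobs/\delta)\rceil$ (or a comparable choice) and must track carefully how the constants $a$ and $b$ propagate through the powers. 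Once these choices are pinned down the remaining estimates are elementary.
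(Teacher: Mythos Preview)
Your proposal is correct and follows the same truncation-plus-Bernstein strategy as the paper: set the truncation level at $\tau \asymp (a\log(n/\delta))^b$, use the moment bound~\eqref{EqnMomentBounds} with $q \asymp \log(n/\delta)$ and a union bound to ensure no $|Y_i|$ exceeds $\tau$ except with probability $\delta/2$, and apply Bernstein's inequality to the truncated sum. You are in fact more careful than the paper's own proof, which applies Bernstein directly to $\frac{1}{n}\sum_i \tilde{Y}_i$ without explicitly centering or tracking the truncation bias $\Exs[\tilde Y_1]$; your separate bookkeeping of that bias term (and the observation that it is dominated by $\tau\log(1/\delta)/n$) fills a small gap the paper leaves implicit.
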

\begin{proof}
The proof of the lemma is a direct combination of truncation argument
and Bernstein's inequality.  In particular, for each $i \in [n]$,
define the truncated random variable \mbox{$\widetilde{Y}_i \mydefn Y_i
  \Ind \left[ |Y_i| \leq 3 (a \log \frac{n}{\delta})^b \right]$.} With
this definition, we have
\begin{align*}
  \Prob \left( (Y_i)_{i = 1}^n \neq (\widetilde{Y}_i)_{i = 1}^n \right) &
  = \Prob \left( \max_{1 \leq i \leq n} |Y_i| > 3 \parenth{a \log
    \frac{n}{\delta} }^b \right) \\ & \leq n \Prob \left( |Y_i| > 3
  \parenth{a \log \frac{n}{\delta}}^b \right) \leq \frac{\delta}{2}.
\end{align*}
Therefore, it is sufficient to study a concentration behavior of the
quantity $\sum_{i = 1}^n \widetilde{Y}_i$. Invoking Bernstein's
inequality~\cite{Lugosi2016}, we obtain that
\begin{align*}
  \Prob \left( \frac{1}{n}\sum_{i = 1}^n \widetilde{Y}_i \geq \varepsilon
  \right) \leq 2 \exp \left( - \frac{n \varepsilon^2}{2 (2a)^{2b} +
    \frac{2}{3} \varepsilon \cdot 3 (a \log \frac{n}{\delta})^b
  }\right).
\end{align*}
In order to make the RHS of the above inequality less than
$\frac{\delta}{2}$, it suffices to set
\begin{align*}
  \varepsilon = (4a)^b \sqrt{\frac{\log (4 / \delta)}{n}} + \parenth{a
    \log \frac{n}{\delta}}^b \frac{\log (4 / \delta)}{n}.
\end{align*}
Collecting all of the above inequalities yields the claim.
\end{proof}

\bibliographystyle{plain}
\bibliography{reference}

\begin{thebibliography}{10}

\bibitem{adamczak2008tail}
Radoslaw Adamczak.
\newblock A tail inequality for suprema of unbounded empirical processes with
  applications to markov chains.
\newblock {\em Electronic Journal of Probability}, 13:1000--1034, 2008.

\bibitem{bakry2008simple}
Dominique Bakry, Franck Barthe, Patrick Cattiaux, and Arnaud Guillin.
\newblock A simple proof of the {P}oincar{\'e} inequality for a large class of
  probability measures.
\newblock {\em Electronic Communications in Probability}, 13:60--66, 2008.

\bibitem{Siva_2017}
S.~Balakrishnan, M.~J. Wainwright, and B.~Yu.
\newblock Statistical guarantees for the {EM} algorithm: From population to
  sample-based analysis.
\newblock {\em Annals of Statistics}, 45:77--120, 2017.

\bibitem{Barron-Shervish-Wasserman-99}
A.~Barron, M.~Schervish, and L.~Wasserman.
\newblock The consistency of posterior distributions in nonparametric problems.
\newblock {\em Ann. Statist}, 27:536--561, 1999.

\bibitem{Bhattacharya-2014}
A.~Bhattacharya, D.~Pati, , and D.~B. Dunson.
\newblock Anisotropic function estimation using multi-bandwidth {G}aussian
  processes.
\newblock {\em Annals of Statistics}, 42:352--381, 2014.

\bibitem{Lugosi2016}
S.~Boucheron, G.~Lugosi, and P.~Massart.
\newblock {\em Concentration {I}nequalities: {A} {N}onasymptotic {T}heory of
  {I}ndependence}.
\newblock Oxford University Press, 2016.

\bibitem{Carroll-Hall-88}
R.~J. Carroll and P.~Hall.
\newblock Optimal rates of convergence for deconvolving a density.
\newblock {\em Journal of American Statistical Association}, 83:1184--1186,
  1988.

\bibitem{Chen1992}
J.~Chen.
\newblock Optimal rate of convergence for finite mixture models.
\newblock {\em Annals of Statistics}, 23(1):221--233, 1995.

\bibitem{chretien2021learning}
St{\'e}phane Chr{\'e}tien, Mihai Cucuringu, Guillaume Lecu{\'e}, and Lucie
  Neirac.
\newblock Learning with semi-definite programming: statistical bounds based on
  fixed point analysis and excess risk curvature.
\newblock {\em Journal of Machine Learning Research}, 22(230), 2021.

\bibitem{dalalyan2017theoretical}
A.~S. Dalalyan.
\newblock Theoretical guarantees for approximate sampling from smooth and
  log-concave densities.
\newblock {\em Journal of the Royal Statistical Society: Series B (Statistical
  Methodology)}, 79(3):651--676, 2017.

\bibitem{Jonge-2010}
R.~de~Jonge and J.~H. van Zanten.
\newblock Adaptive nonparametric {B}ayesian inference using location-scale
  mixture priors.
\newblock {\em Annals of Statistics}, 38:3300--3320, 2010.

\bibitem{Doob-49}
J.~L. Doob.
\newblock Application of the theory of martingales.
\newblock {\em Actes du ColloqueInternational Le Calcul des Probabilit\'{e}s et
  ses applications (Lyon, 28 Juin– 3 Juillet, 1948)}, pages 23--27, 1949.

\bibitem{durmus2017nonasymptotic}
Alain Durmus and Eric Moulines.
\newblock Nonasymptotic convergence analysis for the unadjusted langevin
  algorithm.
\newblock {\em The Annals of Applied Probability}, 27(3):1551--1587, 2017.

\bibitem{durmus2019high}
Alain Durmus and Eric Moulines.
\newblock High-dimensional bayesian inference via the unadjusted langevin
  algorithm.
\newblock {\em Bernoulli}, 25(4A):2854--2882, 2019.

\bibitem{Raaz_Ho_Koulik_2018}
R.~Dwivedi, N.~Ho, K.~Khamaru, M.~J. Wainwright, M.~I. Jordan, and B.~Yu.
\newblock Singularity, misspecification, and the convergence rate of {EM}.
\newblock {\em arXiv preprint arXiv:1810.00828}, 2018.

\bibitem{Freedman-63_first}
D.~A. Freedman.
\newblock On the asymptotic behavior of {B}ayes’ estimates in the discrete
  case.
\newblock {\em Annals of Statistics}, 34:1386–1403, 1963.

\bibitem{Freedman-65_second}
D.~A. Freedman.
\newblock On the asymptotic behavior of {B}ayes’ estimates in the discrete
  case.{II}.
\newblock {\em Annals of Statistics}, 36:454–456, 1965.

\bibitem{gadat2020cost}
S{\'e}bastien Gadat, Fabien Panloup, and Cl{\'e}ment Pellegrini.
\newblock On the cost of bayesian posterior mean strategy for log-concave
  models.
\newblock {\em arXiv preprint arXiv:2010.06420}, 2020.

\bibitem{Gao-2016}
C.~Gao and H.~H. Zhou.
\newblock Rate exact {B}ayesian adaptation with modified block priors.
\newblock {\em Annals of Statistics}, 44:318--345, 2016.

\bibitem{Ghosal-2000}
S.~Ghosal, J.~K. Ghosh, and A.~van~der Vaart.
\newblock Convergence rates of posterior distributions.
\newblock {\em Annals of Statistics}, 28:500--531, 2000.

\bibitem{Ghosal-2001}
S.~Ghosal and A.~van~der Vaart.
\newblock Entropies and rates of convergence for maximum likelihood and {b}ayes
  estimation for mixtures of normal densities.
\newblock {\em Annals of Statistics}, 29:1233--1263, 2001.

\bibitem{Ghosal-2007}
S.~Ghosal and A.~van~der Vaart.
\newblock Posterior convergence rates of {D}irichlet mixtures at smooth
  densities.
\newblock {\em Annals of Statistics}, 35:697--723, 2007.

\bibitem{gross1975logarithmic}
Leonard Gross.
\newblock Logarithmic {S}obolev inequalities.
\newblock {\em American Journal of Mathematics}, 97(4):1061--1083, 1975.

\bibitem{ho2020instability}
Nhat Ho, Koulik Khamaru, Raaz Dwivedi, Martin~J Wainwright, Michael~I Jordan,
  and Bin Yu.
\newblock Instability, computational efficiency and statistical accuracy.
\newblock {\em arXiv preprint arXiv:2005.11411}, 2020.

\bibitem{ho2019singularity}
Nhat Ho and XuanLong Nguyen.
\newblock Singularity structures and impacts on parameter estimation in finite
  mixtures of distributions.
\newblock {\em SIAM Journal on Mathematics of Data Science}, 1(4):730--758,
  2019.

\bibitem{ibragimov1979asymptotic}
Io~Ao Ibragimov and RZ~Khasminskii.
\newblock Asymptotic theory of estimation.
\newblock {\em Nauka, Mosow}, 1:979, 1979.

\bibitem{Ishwaran-2001}
H.~Ishwaran, L.~F. James, and J.~Sun.
\newblock Bayesian model selection in finite mixtures by marginal density
  decompositions.
\newblock {\em Journal of the American Statistical Association}, 96:1316--1332,
  2001.

\bibitem{Kleijn-2006}
B.~J.~K. Kleijn and A.~W. van~der Vaart.
\newblock Misspecification in infinite-dimensional {B}ayesian statistics.
\newblock {\em Annals of Statistics}, 34:837--877, 2006.

\bibitem{kleijn2012bernstein}
Bas~JK Kleijn and Aad~W van~der Vaart.
\newblock The bernstein-von-mises theorem under misspecification.
\newblock {\em Electronic Journal of Statistics}, 6:354--381, 2012.

\bibitem{Lindsay-1995}
B.~Lindsay.
\newblock {\em Mixture {M}odels: Theory, {G}eometry and {A}pplications}.
\newblock In NSF-CBMS Regional Conference Series in Probability and Statistics.
  IMS, Hayward, CA., 1995.

\bibitem{loh2013regularized}
Po-Ling Loh and Martin~J Wainwright.
\newblock Regularized {M}-estimators with nonconvexity: Statistical and
  algorithmic theory for local optima.
\newblock {\em Advances in Neural Information Processing Systems}, 26, 2013.

\bibitem{ma2020implicit}
Cong Ma, Kaizheng Wang, Yuejie Chi, and Yuxin Chen.
\newblock Implicit regularization in nonconvex statistical estimation: Gradient
  descent converges linearly for phase retrieval, matrix completion, and blind
  deconvolution.
\newblock {\em Foundations of Computational Mathematics}, 20(3):451--632, 2020.

\bibitem{mazumdar2020approximate}
Eric Mazumdar, Aldo Pacchiano, Yian Ma, Michael Jordan, and Peter Bartlett.
\newblock On approximate thompson sampling with langevin algorithms.
\newblock In {\em International Conference on Machine Learning}, pages
  6797--6807. PMLR, 2020.

\bibitem{McCullagh_generalized}
P.~McCullagh and J.~A. Nelder.
\newblock {\em Generalized {L}inear {M}odels}.
\newblock Chapman and Hall/CRC, 1989.

\bibitem{mou2019supplementary}
W.~Mou, N.~Ho, M.~J. Wainwright, P.~L. Bartlett, and M.~I. Jordan.
\newblock Supplementary material to ``\textit{A Diffusion Process Perspective
  on Posterior Contraction Rates for Parameters}'', 2019.
\newblock DOI: [COMPLETED BY TYPESETTER].

\bibitem{Nguyen-13}
X.~Nguyen.
\newblock Convergence of latent mixing measures in finite and infinite mixture
  models.
\newblock {\em Annals of Statistics}, 4(1):370--400, 2013.

\bibitem{ostrovskii2021finite}
Dmitrii~M Ostrovskii and Francis Bach.
\newblock Finite-sample analysis of {M}-estimators using self-concordance.
\newblock {\em Electronic Journal of Statistics}, 15(1):326--391, 2021.

\bibitem{polyak1992acceleration}
Boris~T Polyak and Anatoli~B Juditsky.
\newblock Acceleration of stochastic approximation by averaging.
\newblock {\em SIAM journal on control and optimization}, 30(4):838--855, 1992.

\bibitem{MR1725357}
D.~Revuz and M.~Yor.
\newblock {\em Continuous Martingales and {B}rownian Motion}, volume 293.
\newblock Springer-Verlag, third edition, 1999.

\bibitem{risken1996fokker}
Hannes Risken.
\newblock {\em The {F}okker-{P}lanck {E}quation}.
\newblock Springer, 1996.

\bibitem{roberts1996exponential}
Gareth~O Roberts and Richard~L Tweedie.
\newblock Exponential convergence of langevin distributions and their discrete
  approximations.
\newblock {\em Bernoulli}, pages 341--363, 1996.

\bibitem{Judith-2010}
J.~Rousseau.
\newblock Rates of convergence for the posterior distributions of mixtures of
  {B}eta and adaptive nonparametric estimation of the density.
\newblock {\em Annals of Statistics}, 38:146--180, 2010.

\bibitem{Rousseau-2011}
J.~Rousseau and K.~Mengersen.
\newblock Asymptotic behaviour of the posterior distribution in overfitted
  mixture models.
\newblock {\em Journal of the Royal Statistical Society: Series B (Statistical
  Methodology)}, 73:689--710, 2011.

\bibitem{Schwartz-65}
L.~Schwartz.
\newblock On {B}ayes procedures.
\newblock {\em Zeitschrift f\H{u}r Wahrscheinlichkeitstheorie und Verwandte
  Gebiete}, 4:10--26, 1965.

\bibitem{Shen-2013}
W.~Shen, S.~R. Tokdar, , and S.~Ghosal.
\newblock Adaptive {B}ayesian multivariate density estimation with {D}irichlet
  mixtures.
\newblock {\em Biometrika}, 100:623–640, 2013.

\bibitem{Shen-2001}
X.~Shen and L.~Wasserman.
\newblock Rates of convergence of posterior distributions.
\newblock {\em Annals of Statistics}, 29:687--714, 2001.

\bibitem{shi2021understanding}
Bin Shi, Simon~S Du, Michael~I Jordan, and Weijie~J Su.
\newblock Understanding the acceleration phenomenon via high-resolution
  differential equations.
\newblock {\em Mathematical Programming}, pages 1--70, 2021.

\bibitem{spokoiny2012parametric}
Vladimir Spokoiny.
\newblock Parametric estimation. finite sample theory.
\newblock {\em The Annals of Statistics}, 40(6):2877--2909, 2012.

\bibitem{su2016differential}
Weijie Su, Stephen Boyd, and Emmanuel~J Candes.
\newblock A differential equation for modeling nesterov’s accelerated
  gradient method: Theory and insights.
\newblock {\em Journal of Machine Learning Research}, 17:1--43, 2016.

\bibitem{talagrand1996transportation}
Michel Talagrand.
\newblock Transportation cost for gaussian and other product measures.
\newblock {\em Geometric \& Functional Analysis GAFA}, 6(3):587--600, 1996.

\bibitem{Vandegeer-2000}
S.~van~de Geer.
\newblock {\em Empirical Processes in M-estimation}.
\newblock Cambridge University Press, 2000.

\bibitem{vanderVaart-98}
A.~W. van~der Vaart.
\newblock {\em Asymptotic Statistics}.
\newblock Cambridge University Press, 1998.

\bibitem{vanderVaart-Wellner-96}
A.~W. van~der Vaart and J.~Wellner.
\newblock {\em Weak Convergence and Empirical Processes}.
\newblock Springer-Verlag, New York, NY, 1996.

\bibitem{Vaart_Wellner_2000}
A.~W. van~der Vaart and J.~A. Wellner.
\newblock {\em Weak {C}onvergence and {E}mpirical {P}rocesses: {W}ith
  {A}pplications to {S}tatistics}.
\newblock Springer-Verlag, New York, NY, 2000.

\bibitem{Wainwright_nonasymptotic}
M.~J. Wainwright.
\newblock {\em High-Dimensional Statistics: A Non-Asymptotic Viewpoint}.
\newblock Cambridge University Press, 2019.

\bibitem{Walker-2003}
S.~Walker.
\newblock On sufficient conditions for {B}ayesian consistency.
\newblock {\em Annals of Statistics}, 90:482--488, 2003.

\bibitem{Walker-2004}
S.~Walker.
\newblock New approaches to {B}ayesian consistency.
\newblock {\em Annals of Statistics}, 32:2028--2043, 2004.

\bibitem{Walker-2007}
S.~G. Walker, A.~Lijoi, and I.~Prunster.
\newblock On rates of convergence for posterior distributions in
  infinite-dimensional models.
\newblock {\em Annals of Statistics}, 35:738--746, 2007.

\bibitem{Yang-2016}
Y.~Yang and D.~B. Dunson.
\newblock Bayesian manifold regression.
\newblock {\em Annals of Statistics}, 44:876--905, 2016.

\bibitem{Yang-2015}
Y.~Yang and S.~T. Tokdar.
\newblock Minimax-optimal nonparametric regression in high dimensions.
\newblock {\em Annals of Statistics}, 43:652--674, 2015.

\end{thebibliography}

\end{document}